\newcommand*\fullref[3][\relax]{%
  \ifdefined\hyperref%
    {\hyperref[#3]{#2\penalty 200\ \ref*{#3}#1}}%
  \else%
    {#2\penalty 200\ \relax\ref{#3}#1}%
  \fi%
}
\theoremstyle{definition}
\newtheorem{definition}{Definition}[section]
\newtheorem{algorithm}[definition]{Algorithm}
\newtheorem{example}[definition]{Example}
\newtheorem{question}[definition]{Question}
\theoremstyle{plain}
\newtheorem{corollary}[definition]{Corollary}
\newtheorem{lemma}[definition]{Lemma}
\newtheorem{proposition}[definition]{Proposition}
\newtheorem{theorem}[definition]{Theorem}
\numberwithin{equation}{section}
\newcommand*{\defterm}[1]{\textit{#1}}
\DeclarePairedDelimiter{\set}{\{}{\}}
\DeclarePairedDelimiterX{\gset}[2]{\{}{\}}{\,#1:#2\,}
\newcommand\gsetsplit[3][]{\mathopen#1\{\,#2:#3\,\mathclose#1\}}
\DeclareMathOperator{\im}{im}
\newcommand*{\emptyword}{\varepsilon}
\newcommand*{\imreduces}{\rightarrow}
\newcommand*{\reduces}{\rightarrow^*}
\newcommand*{\thue}{\leftrightarrow^*}
\newcommand*{\nset}{\mathbb{N}}
\DeclarePairedDelimiterX{\pres}[2]{\langle}{\rangle}{#1\,\delimsize\vert\,\mathopen{}#2}
\newcommand*{\drel}[1]{\mathcal{#1}}
\newcommand*{\cgen}[1]{#1^{\#}}
\newcommand*{\gH}{\mathrel{\mathcal{H}}}
\newcommand*{\gL}{\mathrel{\mathcal{L}}}
\newcommand*{\gR}{\mathrel{\mathcal{R}}}
\newcommand*{\gD}{\mathrel{\mathcal{D}}}
\newcommand*{\gJ}{\mathrel{\mathcal{J}}}
\newcommand*{\lpad}{\delta_{\mathrm{L}}}
\newcommand*{\rpad}{\delta_{\mathrm{R}}}
\newcommand{\fgrp}[1]{\mathrm{FG}\!\left( #1 \right)}
\newcommand{\elt}[1]{\overline{#1}}
\newcommand{\bigO}{\mathcal{O}}
\newcommand{\derives}{\Rightarrow^*}
\newcommand{\rev}{\mathrm{rev}}
\newcommand{\cliff}{\mathcal{S}}
\newcommand{\Yes}{\emph{Yes}}
\newcommand{\No}{\emph{No}}
\begin{document}

\title{Decision problems for word-hy\-per\-bol\-ic semigroups}
\author{Alan J. Cain \& Markus Pfeiffer}
\date{}

\thanks{During the research that led to the this paper, the first author was initially supported by the European
  Regional Development Fund through the programme {\sc COMPETE} and by the Portuguese Government through the {\sc FCT}
  (Funda\c{c}\~{a}o para a Ci\^{e}ncia e a Tecnologia) under the project {\sc PEst-C}/{\sc MAT}/{\sc UI}0144/2011 and
  through an {\sc FCT} Ci\^{e}ncia 2008 fellowship, and later supported by an {\sc FCT} Investigador advanced fellowship
  ({\sc IF}/01622/2013/{\sc CP}1161/{\sc CT}0001). This work was partially supported by {\sc FCT} through the project
  {\sc UID}/{\sc MAT}/00297/2013 (Centro de Matem\'{a}tica e Aplica\c{c}\~{o}es). Part of the work described here was
  carried out during a visit by the second author to the Universidade Nova de Lisboa, which was funded by an {\sc
    EPSRC} Doctoral Prize 2012, and during a visit by the first author to the University of St Andrews, which was funded
  by a London Mathematical Society Research in Pairs Grant (ref.~41410).}

\maketitle

\address[AJC]{%
Centro de Matem\'{a}tica e Aplica\c{c}\~{o}es, Faculdade de Ci\^{e}ncias e Tecnologia \\
Universidade Nova de Lisboa, 2829--516 Caparica, Portugal
}
\email{%
a.cain@fct.unl.pt
}

\address[MP]{%
School of Mathematics \& Statistics, University of St Andrews \\
North Haugh, St Andrews, Fife, KY16 9SX, United Kingdom
}
\email{%
markus.pfeiffer@st-andrews.ac.uk
}

\begin{abstract}
  This paper studies decision problems for semigroups that are word-hy\-per\-bol\-ic in the sense of Duncan \& Gilman. A
  fundamental investigation reveals that the natural definition of a `word-hy\-per\-bol\-ic structure' has to be
  strengthened slightly in order to define a unique semigroup up to isomorphism. The isomorphism problem is proven to be
  undecidable for word-hyperbolic semigroups (in contrast to the situation for word-hyperbolic groups). It is proved
  that it is undecidable whether a word-hyperbolic semigroup is automatic, asynchronously automatic, biautomatic, or
  asynchronously biautomatic. (These properties do not hold in general for word-hyperbolic semigroups.) It is proved
  that the uniform word problem for word-hy\-per\-bol\-ic semigroup is solvable in polynomial time (improving on the
  previous exponential-time algorithm). Algorithms are presented for deciding whether a word-hy\-per\-bol\-ic semigroup
  is a monoid, a group, a completely simple semigroup, a Clifford semigroup, or a free semigroup.
\end{abstract}


\section{Introduction}

The concept of word-hy\-per\-bol\-ic\-ity in groups, which has grown into one
of the most fruitful areas of group theory since the publication of
Gromov's seminal paper \cite{gromov_hyperbolic}, admits a natural
extension to monoids via using Gilman's characterization of
word-hy\-per\-bol\-ic groups using context-free languages
\cite{gilman_hyperbolic}, which generalizes directly to semigroups and
monoids \cite{duncan_hyperbolic}. Informally, a word-hy\-per\-bol\-ic
structure for a semigroup consists of a regular language of
representatives (not necessarily unique) for the elements of the
semigroup, and a context-free language describing the multiplication
table of the semigroup in terms of those representatives.

This generalization has led to a substantial amount of research on
word-hy\-per\-bol\-ic semigroups; see, for example,
\cite{cm_wordhypunique,fountain_hyperbolic,hoffmann_relatives,hoffmann_notions}. Some
of this work has shown that word-hy\-per\-bol\-ic semigroups do not possess
such pleasant properties as word-hy\-per\-bol\-ic groups: they may not be
finitely presented, and they are not in general automatic or even
asynchronously automatic \cite[Example~7.7
  et~seq.]{hoffmann_relatives}.

The computational aspect of word-hy\-per\-bol\-ic semigroups has so far
received limited attention. The only established result seems to be
the solvablity of the word problem
\cite[Theorem~3.8]{hoffmann_relatives}. In contrast, automatic
semigroups, which generalize automatic groups \cite{epstein_wordproc}
and whose study was inaugurated by Campbell et
al.~\cite{campbell_autsg}, have been studied from a computational
perspective, with both decidability and undecidability results
emerging \cite{c_cancundec,kambites_decision,otto_green}.

This paper is devoted to some important decision problems for
word-hy\-per\-bol\-ic semigroups. Word-hy\-per\-bol\-ic structures are not
necessarily `stronger' or `weaker' computationally than automatic
structures. As noted above, word-hy\-per\-bol\-ic\-ity does not imply
automaticity for semigroups, so one cannot appeal to known results for
automatic semigroups. A word-hy\-per\-bol\-ic structure encodes the whole
multiplication table for the semigroup, not just right-multiplication
by generators (as is the case for automatic structures). On the other
hand, context-free languages are computationally less pleasant than
regular languages. For instance, an intersection of two context-free
languages is not in general context-free, and indeed the emptiness of
such an intersection cannot be decided algorithmically. Thus, in
constructing algorithms for word-hy\-per\-bol\-ic semigroups, it is often
necessary to proceed via an indirect route, or use some unusual
`trick'.

Two of the most important results in this paper are the undecidability results in
\fullref{Section}{sec:isoundec}. First, the isomorphism problem for word-hyperbolic semigroups is undecidable, which
contrasts the decidability of the isomorphism problem for hyperbolic groups
\cite[Theorem~1]{dahmani_isomorphism}. Second, it is undecidable whether a word-hyperbolic semigroup is automatic. (As
noted above, for semigroups, word-hyperbolicity does not in general imply automaticity.)

Among the positive decidability results, the most important is that the uniform word problem for word-hyperbolic
semigroups is soluble in polynomial time (\fullref{Section}{sec:wordproblem}). As remarked above, the word problem was
already known to be solvable, but the previously-known algorithm required time exponential in the lengths of the input
words \cite[Theorem~3.8]{hoffmann_relatives}.

Some basic properties are then shown to be decidable
(\fullref{Section}{sec:basic}): being a monoid, Green's relations
$\gL$, $\gR$, and $\gH$, being a group, and commutativity. These
results are not particularly difficult, but are worth noting.

The main body of the paper shows the decidability of more
complicated algebraic properties: being completely simple
(\fullref{Section}{sec:compsimp}), being a Clifford semigroup
(\fullref{Section}{sec:clifford}), and being a free semigroup
(\fullref{Section}{sec:free}).

Before embarking on the discussion of decision problems, it is
necessary to make a fundamental study of the notion of
word-hy\-per\-bol\-ic\-ity, because the natural notion of a word-hy\-per\-bol\-ic
structure, or more precisely an `interpretation' of a word-hy\-per\-bol\-ic
structure, does not determine a unique semigroup up to isomorphism. A
slightly strengthened definition is needed, and this is the purpose of
the preliminary \fullref{Section}{sec:interpretations}.

The paper ends with a list of some open problems
(\fullref{Section}{sec:questions}).

\section{Preliminaries}

Throughout the paper, we assume basic knowledge of regular language and finite state automata, of context-free languages
and pushdown automata, and of rational relations and transducers; see \cite{hopcroft_automata} and
\cite{berstel_transductions} for background reading.

We denote the empty word (over any alphabet) by $\emptyword$. For an alphabet $A$, we denote by $A^*$ the language of
all words over $A$, and by $A^+$ the language of all non-empty words over $A$. The length of $u \in A^*$ is denoted
$|u|$, and, for any $a \in A$. We denote by $u^\rev$ the reversal of a word $u$; that is, if $u=a_1\cdots a_{n-1}a_n$
then $u^\rev = a_na_{n-1}\cdots a_1$, with $a_i\in A$. We extend this notation to languages: for any language $L
\subseteq A^*$, let $L^\rev = \gset{w^\rev}{w \in L}$.

If $\drel{R}$ is a relation on $A^*$, then $\cgen{\drel{R}}$ denotes the congruence generated by $\drel{R}$. A presentation
is a pair $\pres{A}{\drel{R}}$ that defines [any semigroup isomorphic to] $A^+/\cgen{\drel{R}}$.

\section{The limits of interpretation}
\label{sec:interpretations}

Before developing any algorithms for word-hy\-per\-bol\-ic semigroups, we must clarify the relationship between a
word-hy\-per\-bol\-ic structure (that is, an abstract collection of certain languages) and a semigroup it describes. A
similar study grounds the study of decision problems for automatic semigroups by Kambites \& Otto
\cite{kambites_decision}, and our strategy and choice of terminology closely follows theirs.

\begin{definition}
A \defterm{pre-word-hy\-per\-bol\-ic structure} $\Sigma$ consists of:
\begin{itemize}

\item a finite alphabet $A(\Sigma)$;

\item a regular language $L(\Sigma)$ over $A(\Sigma)$, not including the empty word;

\item a context-free language $M(\Sigma)$ over $A(\Sigma) \cup
  \set{\#_1,\#_2}$, where $\#_1$ and $\#_2$ are new symbols not in
  $A(\Sigma)$, such that $M(\Sigma) \subseteq L(\Sigma)\#_1L(\Sigma)\#_2L(\Sigma)^\rev$.
\end{itemize}
When $\Sigma$ is clear from the context, we may write $A$, $L$, and
  $M$ instead of $A(\Sigma)$, $L(\Sigma)$, and $M(\Sigma)$,
  respectively.
\end{definition}

The idea is that $A(\Sigma)$ will represent a set of generators for a semigroup, $L(\Sigma)$ will be a language of
representatives for the elements of that semigroup, and $M(\Sigma)$ will describe the multiplication table for that
semigroup in terms of the representatives in $L(\Sigma)$. However, a `pre-word-hy\-per\-bol\-ic structure' consists only
of languages fulfilling certain basic properties: there is no mention of being a structure `for a semigroup' in the
definition. In particular, at this point there is nothing that guarantees $L(\Sigma)$ or $M(\Sigma)$ are non-empty. Or,
$L(\Sigma)$ could be $A(\Sigma)^+$ and $M(\Sigma)$ could be the language $u\#_1v\#_2L(\Sigma)$ for some fixed $u,v \in
L(\Sigma)$; clearly, $M(\Sigma)$ is very far from describing a multiplication table.

Now, following Kambites \& Otto for automatic semigroups \cite[\S~2.2]{kambites_decision}, let us make a first attempt
to turn the abstract pre-word-hy\-per\-bol\-ic structure into something that describes a semigroup. As we shall see,
this definition is flawed, but it is instructive to see its consequences, since these illustrate why the improved
\fullref{Definition}{def:improved} is actually the correct one.

\begin{definition}[(First attempt)]
\label{def:first}
An \defterm{interpretation} of a pre-word-hy\-per\-bol\-ic structure
$\Sigma$ with respect to a semigroup $S$ is a homomorphism $\phi : A^+
\to S$ such that $L\phi = S$ and
\[
M(\Sigma) = \gset[\big]{u\#_1v\#_2w^\rev}{u,v,w \in L, (u\phi)(v\phi) = w\phi}.
\]
When there is no risk of confusion, denote $u\phi$ by $\elt{u}$ for
  any $u \in A^+$, and $X\phi$ by $\elt{X}$ for any $X \subseteq A^+$.

If a pre-word-hy\-per\-bol\-ic structure $\Sigma$ admits an interpretation
with respect to a semigroup $S$, then $\Sigma$ is a
\defterm{word-hy\-per\-bol\-ic structure} for $S$.

A semigroup is \defterm{word-hyperbolic} if it admits a word-hyperbolic structure.
\end{definition}

Suppose $\Sigma$ is a word-hy\-per\-bol\-ic structure for $S$, as per
\fullref{Definition}{def:first}. Then words in $A^+$ represent
elements of $S$, the regular language $L$ contains at least one
representative for every element of $S$, and the context-free
language $M$ encodes the multiplication table for $S$ in terms of
representatives in $L$. However, there is a problem. In contrast to
the situation for automatic semigroups
\cite[Proposition~2.3]{kambites_decision}, a word-hy\-per\-bol\-ic structure
(using \fullref{Definition}{def:first}) does not uniquely determine a
semigroup: the same pre-word-hy\-per\-bol\-ic structure can admit
interpretations with respect to non-isomorphic semigroups, as the
following example shows:

\begin{example}
\label{ex:wordhypdoesnotdetermine}
Let $\Sigma$ be a pre-word-hy\-per\-bol\-ic structure with $A = \set{a,b,c}$,
$L = A$, and $M = \gset{u\#_1v\#_2a^\rev}{u,v \in L}$. (Of course,
$a^\rev = a$ since $a$ is a single letter.)

Let $S$ be the two-element null semigroup $\set{0,x}$, where all
products are equal to $0$. Let $T$ be the three-element null semigroup
$\set{0,x,y}$, again with all products equal to $0$.

Define mappings $\phi : A \to S$ and $\psi : A \to T$ by
\begin{align*}
a\phi &= 0, & b\phi &= x, & c\phi &= x,\\
a\psi &= 0, & b\psi &= x, & c\psi &= y.
\end{align*}
Then $L\phi = S$ and $L\psi = T$. Furthermore,
\begin{align*}
M &= \gset{u\#_1v\#_2a^\rev}{u,v \in L} \\
&= \gset{u\#_1v\#_2a^\rev}{u,v \in L, (u\phi)(v\phi) = a\phi} \\
&= \gset{u\#_1v\#_2w^\rev}{u,v,w \in L, (u\phi)(v\phi) = w\phi},
\end{align*}
since all products in $S$ are equal to $0$ and $a$ is the unique word
in $L$ mapped to $0$ by $\phi$. Similarly,
\[
M = \gset[\big]{u\#_1v\#_2w^\rev}{u,v,w \in L, (u\psi)(v\psi) = w\psi}
\]
since all products in $T$ are equal to $0$ and $a$ is the unique word
in $L$ in mapped to $0$ by $\psi$.

Thus $\phi$ and $\psi$ are interpretations of $\Sigma$ with respect
to the non-isomorphic semigroups $S$ and $T$ respectively.
\end{example}

Hence, it seems not to make sense to consider decision problems for general word-hy\-per\-bol\-ic semigroups, at least
with the current definitions. It would be illogical to ask for an algorithm that takes as input a word-hy\-per\-bol\-ic
structure and determined some property of `the' semigroup is describes, since there is no such unique semigroup. The
fundamental problem \fullref{Example}{ex:wordhypdoesnotdetermine} elucidates is that the word-hy\-per\-bol\-ic structure
$\Sigma$ does not necessarily determine whether the two symbols $b$ and $c$ represent the same element or different
elements. However, this problem only arises for two symbols in $A$ (that is, for two words over $A$ of length $1$). This
is because, in a sense we shall formalize shortly, a word-hyperbolic structure $\Sigma$ does determine if two words in
$L(\Sigma)$ represent the same \emph{decomposable} element of a semigroup: Suppose that $\phi : A(\Sigma)^+ \to S$ is an
interpretation of $\Sigma$, and let $w,x \in L(\Sigma)$ be such that $w\phi$ is decomposable as $(u\phi)(v\phi)$. Then
\[
w\phi = x\phi \iff (\exists u,v \in L(\Sigma))\bigl(u\#_1v\#_2w^\rev \in M(\Sigma) \land u\#_1v\#_2x^\rev \in M(\Sigma)\bigr).
\]
That is, one can tell whether $w$ and $x$ represent the same element of the semigroup by using the information in
$\Sigma)$ to factor $w\phi$ as $u\phi$ and $v\phi$ and check if $w\phi = (u\phi)(v\phi) = x\phi$. In
\fullref{Lemma}{lem:equality} below, we shall show consider a relation $E(\Sigma)$ that encapsulates the information
about representatives for the same element that can be extracted from $\Sigma$ by this `decompostion and multiplication'
trick.

Note that because all elements of a monoid are decomposable, the relation $E(\Sigma)$ relates \emph{all} pairs of words
that represent the same element. However, for general semigroups, one needs to go further.

Since words of length at least $2$ in $L(\Sigma)$ must represent decomposable elements, the only ambiguity is in whether
letters in $A(\Sigma)$ represent equal elements of $S$. Thus, as we shall see, if we insist that an interpretation
should consist of a map that sends distinct letters to distinct elements of the semigroup (that is, the homomorphism is
an injection when restricted to $A(\Sigma)$), then a word-hy\-per\-bol\-ic structure does describe a unique semigroup up
to isomorphism. In order to prove this result (\fullref{Proposition}{prop:isomorphic}), we need the following lemma:

\begin{lemma}
\label{lem:equality}
Let $\Sigma$ be a word-hy\-per\-bol\-ic structure. Then there is a relation
$E(\Sigma) \subseteq L(\Sigma) \times L(\Sigma)$, dependent only on
$\Sigma$, such that the following are equivalent for any words $w,x \in L(\Sigma)$:
\begin{enumerate}
\item $(w,x) \in E(\Sigma)$;
\item $w\phi = x\phi$ for some interpretation $\phi : A(\Sigma)^+ \to S$ with
  $\phi|_A$ being an injection;
\item $w\phi = x\phi$ for any interpretation $\phi : A(\Sigma)^+ \to S$ with
  $\phi|_A$ being an injection.
\end{enumerate}
\end{lemma}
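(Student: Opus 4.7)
The plan is to define $E(\Sigma)$ directly from the languages $L$ and $M$ in a way that manifestly depends only on $\Sigma$, and then to prove both inclusions with the relation $\sim_\phi := \{(w,x) \in L \times L : w\phi = x\phi\}$ for any interpretation $\phi$ with $\phi|_A$ injective. Concretely, I would let $E(\Sigma)$ be the smallest equivalence relation on $L$ that contains the pair $(w, w')$ whenever there exist $u, v \in L$ with both $u \#_1 v \#_2 w^\rev \in M$ and $u \#_1 v \#_2 w'^\rev \in M$. Since this uses only $L$ and $M$, $E(\Sigma)$ is intrinsic to $\Sigma$.

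The implication (1) $\Rightarrow$ (3), i.e.\ $E(\Sigma) \subseteq \sim_\phi$, is the easy direction: each generating pair gives $(u\phi)(v\phi) = w\phi$ and $(u\phi)(v\phi) = w'\phi$ by the interpretation property, hence $w\phi = w'\phi$; and since $\sim_\phi$ is already an equivalence relation it absorbs the whole closure.

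The harder direction is (2) $\Rightarrow$ (1), i.e.\ $\sim_\phi \subseteq E(\Sigma)$. Given $w\phi = x\phi$ for some interpretation $\phi$ with $\phi|_A$ injective, I would split on word length. If both $w$ and $x$ have length $1$ then they are letters in $A$, and injectivity of $\phi|_A$ forces $w = x$, so $(w,x) \in E(\Sigma)$ by reflexivity. Otherwise at least one of them, say $x$, has length $\geq 2$; write $x = a_1 a_2 \cdots a_n$ in $A^+$ and exploit the surjectivity $L\phi = S$ to choose $u, v \in L$ with $u\phi = (a_1 \cdots a_{n-1})\phi$ and $v\phi = a_n\phi$. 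Then $(u\phi)(v\phi) = x\phi = w\phi$, so both $u \#_1 v \#_2 x^\rev$ and $u \#_1 v \#_2 w^\rev$ lie in $M$, exhibiting $(w,x)$ as a generating pair of $E(\Sigma)$.

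Finally, (3) $\Rightarrow$ (2) is immediate whenever $\Sigma$ admits at least one interpretation with $\phi|_A$ injective, which is the relevant case. The main subtlety is the case analysis just sketched: the whole argument hinges on using $L\phi = S$ to realize both the length-$(n-1)$ prefix and the final letter of $x$ by single words of $L$, thereby converting an abstract equality in $S$ into two concrete membership statements in $M$. This is also the reason no auxiliary closure rule (such as \emph{equivalent factors give equivalent products}) has to be added to the definition of $E(\Sigma)$: a single application of the base condition already suffices.
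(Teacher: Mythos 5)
Your proof is correct and follows essentially the same route as the paper: define $E(\Sigma)$ from pairs sharing a common witnessing product $u\#_1v\#_2(\cdot)^\rev$ in $M(\Sigma)$, get (1)$\Rightarrow$(3) directly from the interpretation property, handle length-one words via injectivity of $\phi|_A$, and reduce the length-$\geq 2$ case to membership in $M$ by decomposing the word and using $L\phi = S$. The only (harmless) difference is that you take the equivalence closure of the generating pairs, whereas the paper uses the explicit set consisting of the diagonal on $A \cap L$ together with the symmetrized witnessed pairs.
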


\begin{proof}
Define
\begin{align*}
E' &= \gsetsplit[\big]{(w,x)}{w \in L(\Sigma), x \in L(\Sigma), |w| \geq |x|, |w| \geq 2,\\
&\qquad\qquad (\exists u,v \in L(\Sigma))(u\#_1v\#_2w^\rev \in M(\Sigma) \land u\#_1v\#_2x^\rev \in M(\Sigma)} \\
\end{align*}
and let
\begin{equation}
\label{eq:esigma}
E(\Sigma) = \gset[\big]{(a,a)}{a \in A(\Sigma) \cap L(\Sigma)} \cup E' \cup (E')^{-1}.
\end{equation}
The aim is now to show that $E(\Sigma)$ has the required
properties. Let $w, x \in L(\Sigma)$.

First suppose that (1) holds; that is, that $(w,x) \in E(\Sigma)$. Let
$\phi$ be any interpretation of $\Sigma$. Either $w,x \in A(\Sigma)
\cap L(\Sigma)$, in which case $w = x$ by \eqref{eq:esigma} and so
$w\phi = x\phi$, or $(w,x) \in E' \cup (E')^{-1}$. Assume $(w,x) \in
E'$; the other case is symmetrical. Then there exist $u,v \in L(\Sigma)$ such
that $u\#_1v\#_2w^\rev \in M(\Sigma)$ and $u\#_1v\#_2x^\rev \in
M(\Sigma)$. Hence $w\phi = (u\phi)(v\phi) = x\phi$ since $\phi$ is an
interpretation of $\Sigma$. Hence~(1) implies~(3).

It is clear that (3) implies (2). Now suppose that (2) holds; that is,
that $w\phi = x\phi$ for some interpretation $\phi : A(\Sigma)^+ \to
S$ with $\phi|_A$ being an injection. If $|w| = |x| = 1$, then $w,x
\in A$ and so $w = x$ since $\phi|_A$ is injective, and so $(w,x) \in
E(\Sigma)$. Now suppose that at least one of $|w|$ and $|x|$ is
greater than $1$. Assume $|w| \geq |x|$; the other case is
similar. Since $w$ has at least two letters, the element $w\phi$ is
decomposable in $S$. So there are words $u,v \in L$ with
$(u\phi)(v\phi) = w\phi$. Since $w\phi = x\phi$, it also follows that
$(u\phi)(v\phi) = x\phi$. Thus the words $u\#_1v\#_2w^\rev$ and
$u\#_1v\#_1x^\rev$ both lie in $M$ since $\phi$ is an interpretation
of $\Sigma$. Hence $(w,x) \in E' \subseteq E(\Sigma)$. Hence~(2)
implies~(1).
\end{proof}

\begin{proposition}
\label{prop:isomorphic}
Let $\Sigma$ be a word-hy\-per\-bol\-ic structure admitting interpretations
$\phi : A^+ \to S$ and $\psi : A^+ \to T$, with both $\phi|_A$ and
$\psi|_A$ being injections. Then there is an isomorphism $\tau$ from
$S$ to $T$ such that $\phi|_L\tau = \psi|_L$.
\end{proposition}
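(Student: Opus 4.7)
The plan is to define $\tau : S \to T$ by $s\tau = w\psi$, where $w \in L$ is any $\phi$-preimage of $s$ (which exists because $L\phi = S$), and then verify that $\tau$ is a well-defined bijective homomorphism satisfying $\phi|_L\tau = \psi|_L$. The only nontrivial step is well-definedness: if $w_1, w_2 \in L$ both satisfy $w_i\phi = s$, one must show $w_1\psi = w_2\psi$. This is precisely what \fullref{Lemma}{lem:equality} is designed to deliver: since $\phi|_A$ is injective, the implication (2)$\Rightarrow$(1) applied to $\phi$ gives $(w_1,w_2) \in E(\Sigma)$; since $\psi|_A$ is also injective, the implication (1)$\Rightarrow$(3) applied to $\psi$ then gives $w_1\psi = w_2\psi$. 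The required identity $\phi|_L\tau = \psi|_L$ is immediate from the construction.

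For surjectivity, I would use $L\psi = T$ to choose, for each $t \in T$, a word $w \in L$ with $w\psi = t$, so that $(w\phi)\tau = w\psi = t$. For injectivity, the argument is symmetric: if $s_1\tau = s_2\tau$ with $s_i = w_i\phi$ and $w_i \in L$, then $w_1\psi = w_2\psi$, so \fullref{Lemma}{lem:equality} applied first to $\psi$ gives $(w_1,w_2) \in E(\Sigma)$ and then applied to $\phi$ gives $w_1\phi = w_2\phi$, i.e.\ $s_1 = s_2$. For the homomorphism property, given $s_1,s_2 \in S$ I would pick $u,v,w \in L$ with $u\phi = s_1$, $v\phi = s_2$, and $w\phi = s_1 s_2$; then $u\#_1v\#_2w^\rev \in M(\Sigma)$ because $\phi$ is an interpretation, and hence $(u\psi)(v\psi) = w\psi$ because $\psi$ is an interpretation, which rearranges to $(s_1\tau)(s_2\tau) = (s_1 s_2)\tau$.

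The main obstacle, and essentially the only subtlety, is the well-definedness step, which collapses without the injectivity hypotheses on $\phi|_A$ and $\psi|_A$: \fullref{Example}{ex:wordhypdoesnotdetermine} already exhibits non-isomorphic semigroups admitting interpretations of a common pre-word-hy\-per\-bol\-ic structure, so some extra input is unavoidable. \fullref{Lemma}{lem:equality} packages exactly that input as a single relation $E(\Sigma)$ intrinsic to $\Sigma$, so that the fibres of $\phi|_L$ and of $\psi|_L$ both coincide with the $E(\Sigma)$-classes; once this is in hand the whole proof is a routine unpacking of definitions.
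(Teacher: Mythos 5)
Your proposal is correct and takes essentially the same route as the paper: the same definition of $\tau$ via $L$-representatives, well-definedness from \fullref{Lemma}{lem:equality}, and the same homomorphism argument via membership of $u\#_1v\#_2w^\rev$ in $M(\Sigma)$, with $\phi|_L\tau = \psi|_L$ immediate from the construction. The only cosmetic difference is that you establish bijectivity by proving injectivity and surjectivity of $\tau$ directly, whereas the paper constructs the symmetric map $\tau' : T \to S$ and observes that $\tau$ and $\tau'$ are mutually inverse homomorphisms.
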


\begin{proof}
Define maps $\tau : S \to T$ and $\tau' : T \to S$ as follows. For any
$s \in S$ let $s\tau$ be $w\psi$, where $w \in L$ is some word with
$w\phi = s$, and for any $t \in T$, let $t\tau'$ be $w'\phi$, where
$w' \in L$ is some word with $w'\psi = t$. (The words $w$ and $w'$ are
guaranteed to exist since $\phi$ and $\psi$ are surjections.) The maps
$\tau$ and $\tau'$ are well-defined as a consequence of
\fullref{Lemma}{lem:equality}.

To show that $\tau$ is a homomorphism, proceed as follows. Let $r,s \in
S$ and choose $u,v,w \in L$ with $u\phi = r$, $v\phi = s$, and $w\phi
= rs$. Then $r\tau = u\psi$, $s\tau = v\psi$, and $(rs)\tau = w\psi$, by
the definition of $\tau$. Now, $u\#_1v\#_2w^\rev \in M$ (since $\phi$
is an interpretation of $\Sigma$) and so $(u\psi)(v\psi) = w\psi$
(since $\psi$ is an interpretation of $\Sigma$). Thus
\[
(r\tau)(s\tau) = (u\psi)(v\psi) = (w\psi) = (rs)\tau
\]
and so $\tau$ is a homomorphism.

Symmetric reasoning shows that $\tau' : T \to S$ is a
homomorphism. The maps $\tau$ and $\tau'$ are mutually inverse, since
if $w \in L$ is such that $w\phi = s$ and $w\psi = t$, then $s\tau =
t$ and $\tau' = s$. Thus $\tau : S \to T$ is an isomorphism. By the
definition of $\tau$ using elements of $L$, it follows that
$\phi|_L\tau = \psi|_L$.
\end{proof}

The extra condition used in \fullref{Proposition}{prop:isomorphic},
where the interpretation restricted to the alphabet $A$ is an
injection, does not restrict the class of word-hy\-per\-bol\-ic semigroups:

\begin{proposition}
\label{prop:wordhypstrinj}
Let $\Sigma$ be a word-hy\-per\-bol\-ic structure and $\phi : A(\Sigma)^+
\to S$ an interpretation for $\Sigma$ with respect to a semigroup
$S$. Then there is a word-hy\-per\-bol\-ic structure $\Pi$, effectively
computable from $\Sigma$ and $\phi|_{A(\Sigma)}$, with $A(\Pi)
\subseteq A(\Sigma)$, admitting an interpretation $\psi : A(\Pi)^+ \to
S$ with $\psi|_{A(\Pi)}$ being an injection.
\end{proposition}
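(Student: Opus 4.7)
The plan is to quotient $A(\Sigma)$ by the equivalence induced by $\phi|_{A(\Sigma)}$ and push the whole structure through the corresponding alphabetic projection. Since $\phi|_A$ is part of the input, we can compute the equivalence relation $\sim$ on $A(\Sigma)$ defined by $a \sim b \iff a\phi = b\phi$, pick a set $B \subseteq A(\Sigma)$ containing exactly one representative from each $\sim$-class, and choose a map $\rho : A(\Sigma) \to B$ sending each letter to its representative (so $\rho|_B$ is the identity). Extend $\rho$ to a monoid homomorphism $h : A(\Sigma)^* \to B^*$, and then extend it further to a homomorphism $\bar h : (A(\Sigma) \cup \{\#_1,\#_2\})^* \to (B \cup \{\#_1,\#_2\})^*$ by fixing the two markers. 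Take $A(\Pi) = B$, $L(\Pi) = h(L(\Sigma))$, and $M(\Pi) = \bar h(M(\Sigma))$.

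The language-theoretic conditions are straightforward: regular languages are closed under homomorphism, so $L(\Pi)$ is regular, and context-free languages are closed under homomorphism, so $M(\Pi)$ is context-free; both constructions are effective. Since $\bar h$ fixes $\#_1$ and $\#_2$ and commutes with the reverse operation on words over $A(\Sigma)$, the inclusion $M(\Pi) \subseteq L(\Pi) \#_1 L(\Pi) \#_2 L(\Pi)^\rev$ follows immediately from the corresponding inclusion for $\Sigma$, so $\Pi$ is a pre-word-hy\-per\-bol\-ic structure.

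For the interpretation, define $\psi : B^+ \to S$ simply as $\phi|_{B^+}$; then $\psi|_B = \phi|_B$ is injective by the choice of $B$. The key identity is that $h(w)\phi = w\phi$ for every $w \in A(\Sigma)^+$, which holds because $\rho(a)\phi = a\phi$ letter by letter. Hence $L(\Pi)\psi = h(L(\Sigma))\phi = L(\Sigma)\phi = S$. For the multiplication condition, given $u'\#_1v'\#_2(w')^\rev \in M(\Pi)$, there exist $u,v,w \in L(\Sigma)$ with $u\#_1v\#_2w^\rev \in M(\Sigma)$ and $h(u)=u'$, $h(v)=v'$, $h(w)=w'$; then $(u'\psi)(v'\psi) = (u\phi)(v\phi) = w\phi = w'\psi$. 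Conversely, if $u',v',w' \in L(\Pi)$ with $(u'\psi)(v'\psi) = w'\psi$, lift them to $u,v,w \in L(\Sigma)$ with $h(u)=u'$, etc., obtain $(u\phi)(v\phi) = w\phi$ from the same identity, so $u\#_1v\#_2w^\rev \in M(\Sigma)$ and its image under $\bar h$ lies in $M(\Pi)$. Thus $\psi$ is an interpretation of $\Pi$ with respect to $S$.

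The only slightly delicate point is the lifting step in the last paragraph: we need surjectivity of $\bar h$ onto $M(\Pi)$ to lift triples from $L(\Pi)$ back to $L(\Sigma)$, but this is immediate from the definitions $L(\Pi) = h(L(\Sigma))$ and $M(\Pi) = \bar h(M(\Sigma))$. Everything else is bookkeeping, and effectiveness is automatic because each construction (computing $\sim$, choosing $B$, applying a homomorphism to a regular or context-free language) is effective from the given data.
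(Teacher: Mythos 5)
Your proof is correct and takes essentially the same approach as the paper: the paper performs exactly this letter-identification (replacing letters by chosen representatives with the same image under $\phi$, realized as a relabelling of $L(\Sigma)$ and $M(\Sigma)$), only iteratively one pair of letters at a time, whereas you apply the full alphabetic projection $h$ in one step and verify the interpretation conditions explicitly. Note that your key identity $h(w)\phi = w\phi$ uses that the interpretation map is a homomorphism on $A(\Sigma)^+$, which is the paper's implicit standing convention (its own replacement step relies on the same fact), so this is fine.
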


\begin{proof}
Initially, let $\Pi = \Sigma$. We will modify $\Pi$ until it has the
desired property.

Suppose $\phi|_{A(\Pi)}$ is not injective. Pick $a,b \in A(\Pi)$ with
$a\phi = b\phi$. Replace every instance of $b$ by $a$ in words in
$L(\Pi)$. (This corresponds to replacing $b$ by $a$ whenever it
appears as a label on an edge in a finite automaton recognizing
$L(\Pi)$.) Replace every instance of $b$ by $a$ in words in
$M(\Pi)$. (This corresponds to replacing $b$ by $a$ whenever it
appears as non-terminal in a context-free grammar defining $L(\Pi)$.)
Finally, delete $b$ from $A(\Pi)$. Since $a\phi = b\phi$, it follows
that $\Pi$ is a word-hy\-per\-bol\-ic structure admitting an interpretation
$\phi|_{A(\Pi)^+} : A(\Pi)^+ \to S$ with respect to $S$.

Since $A(\Pi)$ is finite, we can iterate this process until
$\phi|_{A(\Pi)}$ becomes injective. Finally, define $\psi =
\phi|_{A(\Pi)^+}$.
\end{proof}

In light of \fullref{Proposition}{prop:wordhypstrinj}, we modify our
definition of interpretation, to insist that each symbol represents a
different element of the semigroup:

\begin{definition}[(Improved version)]
\label{def:improved}
An \defterm{interpretation} of a pre-word-hy\-per\-bol\-ic structure
$\Sigma$ with respect to a semigroup $S$ is a homomorphism $\phi : A(\Sigma)^+
\to S$, with $\phi|_A$ being injective, such that $(L(\Sigma))\phi = S$ and
\[
M(\Sigma) = \gset[\big]{u\#_1v\#_2w^\rev}{u,v,w \in L(\Sigma), (u\phi)(v\phi) = w\phi}.
\]
Again, when there is no risk of confusion, denote $u\phi$ by $\elt{u}$ for
  any $u \in A^+$, and $X\phi$ by $\elt{X}$ for any $X \subseteq A^+$.
\end{definition}

Therefore, a word-hy\-per\-bol\-ic structure is henceforth a
pre-word-hy\-per\-bol\-ic structure that admits an interpretation in the
sense of \fullref{Definition}{def:improved}. With this new definition,
\fullref{Proposition}{prop:isomorphic} shows that each word-hy\-per\-bol\-ic
structure describes a uniquely determined semigroup, and therefore one
can sensibly attempt to solve questions about the semigroup using the
word-hy\-per\-bol\-ic structure.

However, although a word-hy\-per\-bol\-ic structure determines a unique
semigroup, it does not determine a unique interpretation, even up to
automorphic permutation. This parallels the situation for automatic
semigroups \cite[\S~2.2]{kambites_decision}, but is also true in a
rather vacuous sense for word-hy\-per\-bol\-ic semigroups, for the alphabet
$A(\Sigma)$ for a word-hy\-per\-bol\-ic structure $\Sigma$ for a semigroup
$S$ may include a symbol $c$ that does not appear in any word in
either $L(\Sigma)$ or $M(\Sigma)$. (In this situation, $c$ must
represent a redundant generator for $S$.)

For example, let $A(\Sigma) = \set{a,b,c}$, $L(\Sigma) = \set{a,b}^+$, and
$M(\Sigma) = \gset{u\#_1v\#_1(uv)^\rev}{u,v \in \set{a,b}^+}$. Then $\Sigma$
is a word-hy\-per\-bol\-ic structure for the free semigroup $F$ with basis
$\set{x,y}$: let $\phi : A(\Sigma)^+ \to F$ be such that $a\phi = x$ and
$b\phi = y$; regardless of how $c\phi$ is defined, $\phi$ is an
interpretation of $\Sigma$ with respect to $F$.

Less trivial is the following example:

\begin{example}
\label{ex:diffinterp}
Let $S = (\set{1,2} \times
\set{1,2,3}) \cup \set{0_S,1_S}$ and define multiplication on $S$ by
\begin{align*}
(i,\lambda)(j,\mu) &= \begin{cases} 0_S &\text{if $\lambda = j = 1$,} \\
(i,\mu) &\text{otherwise;} \end{cases} \\
1_Sx = x1_S &= x \quad\text{for all $x \in S$;} \\
0_Sx = x0_S &= 0_S \quad\text{for all $x \in S$.}
\end{align*}
Then $S$ is a monoid. (In fact, $S$ is a monoid formed by adjoining an
identity to a $0$-Rees matrix semigroup over the trivial group.)

Let $A(\Sigma) = \set{a,b,c,d,e,i,z}$. Let $L(\Sigma) =
\set{a,b,c,d,ced,dec,i,z}$. Define
\begin{align*}
\phi_1 &: A(\Sigma)^+ \to S & a\phi_1 &= (1,1), & b\phi_1 &= (2,1), & c\phi_1 &= (1,2), \\
&& d\phi_1 &= (2,3), & e\phi_1 &= (2,2), & i\phi_1 &= 1_S, & z\phi_1 &= 0_S; \\
\phi_2 &: A(\Sigma)^+ \to S & a\phi_2 &= (1,1), & b\phi_2 &= (2,1), & c\phi_2 &= (1,2), \\
&& d\phi_2 &= (2,3), & e\phi_2 &= (1,3), & i\phi_1 &= 1_S, & z\phi_2 &= 0_S.
\end{align*}
Notice that the only difference in the definitions of $\phi_1$ and $\phi_2$ is the image of the symbol $e$. Furthermore,
\begin{align*}
(ced)\phi_1 = (1,2)(2,2)(2,3) ={}& (1,3) = (1,2)(1,3)(2,3) = (ced)\phi_2,\\
(dec)\phi_1 = (2,3)(2,2)(1,2) ={}& (2,2) = (2,3)(1,3)(1,2) = (dec)\phi_2;
\end{align*}
thus $\phi_1|_L = \phi_2|_L$ and $L\phi_1 = L\phi_2 = S$.

Define
\[
M(\Sigma) = \gset[\big]{u\#_1v\#_2w^\rev}{u,v,w \in L(\Sigma), (u\phi_1)(v\phi_1) = w\phi_1}.
\]
Since $L(\Sigma)$ is finite, $M(\Sigma)$ is also finite and thus context-free. So $\Sigma$ is a word-hy\-per\-bol\-ic
structure for $S$ and $\phi_1$ is an interpretation for $\Sigma$ with respect to $S$.  Furthermore, since Hence
$\phi_1|_L = \phi_2|_L$,
\[
M(\Sigma) = \gset[\big]{u\#_1v\#_2w^\rev}{u,v,w \in L(\Sigma), (u\phi_2)(v\phi_2) = w\phi_2},
\]
and so $\phi_2$ is also an interpretation of $\Sigma$ with respect to
$S$.

Moreover, there is no automorphism $\rho$ of $S$ such that $\phi_1\rho = \phi_2$. To see this, notice that such a $\rho$
would have to map $e\phi_1 = (2,1)$ to $e\phi_2 = (1,3)$. The map $\rho$ would also preserve $\gR$-classes. But the
$\gR$-class of $(1,3)$ contains the element $(1,1)$, which is not idempotent (since $(1,1)(1,1) = 0$), whereas every
element of the $\gR$-class of $(2,1)$ is idempotent. So no such map $\rho$ can exist. So the two interpretations are not
even equivalent up to automorphic permutation of $S$.
\end{example}

The crucial point in \fullref{Example}{ex:diffinterp} is that
\fullref{Proposition}{prop:isomorphic} only guarantees that the
\emph{restriction} of two interpretations to $L$ are equivalent up to
automorphic permutation. It says nothing about the interpretation maps
on the whole of $A^+$.

The next result essentially shows that word-hyperbolicity is invariant under change of finite generating set. This
result, and its proof, are due to Hoffmann et al. \cite[Proposition~4.2]{hoffmann_relatives}, but are given here using
the more precise definitions of the present paper.

\begin{proposition}
  \label{prop:wordhypchangegens}
  Let $S$ be a word-hyperbolic semigroup. Let $X \subseteq S$ be a finite generating set for $S$. Then there is a
  word-hyperbolic structure $\Sigma$ for $S$ with an interpretation $\phi : A(\Sigma)^+ \to S$ such that
  $(A(\Sigma))\phi = X$.
\end{proposition}

\begin{proof}
  Let $\Pi$ be a word-hyperbolic structure for $S$. Let $\psi : A(\Pi)^+ \to S$ be an interpretation for $S$. Let
  $A(\Sigma)$ be an alphabet in bijection with $X$ under a map $\phi : A(\Sigma) \to X$. The map $\phi$ extends to a
  unique homomorphism $\phi : A(\Sigma) \to S$, which is surjective since $X$ generates $S$. Note that
  $\phi|_{A(\Sigma)}$ is injective by construction.

  For each $a \in A(\Pi)$, let $w_a$ be a word in $A(\Sigma)^+$ such that $a\psi = w_a\phi$; such words exist since
  $\phi$ is surjective. Define a homomorphism
  \begin{align*}
    \Theta : A(\Pi)^+ \cup \set{\#_1,\#_2} \to A(\Sigma)^+ &&a \mapsto w_a, &&\text{for $a \in A(\Pi)$} \\
    &&\#_i \mapsto \#_i &&\text{for $i=1,2$.}
  \end{align*}
  Let $L(\Sigma) = (L(\Pi))\Theta$ and $M(\Sigma) = (M(\Pi))\Theta$. The class of regular languages is closed under
  homomorphism, so $L(\Sigma)$ is regular; similarly, $M(\Sigma)$ is context-free. So $\Sigma$ is a pre-word-hyperbolic
  structure; it remains to prove that $\phi$ is an interpretation of $\Sigma$.

  Notice that $a\psi = w_a\phi = a\Theta\phi$, and thus $\psi = \Theta\phi$. Therefore $(L(\Sigma))\phi =
  (L(\Pi))\Theta\phi = (L(\Pi))\psi = S$, since $\psi$ is an interpretation of $\Pi$. Furthermore,
  \begin{align*}
    &u\#_1v\#_2w^\rev \in M(\Sigma) \\
    \iff{}& (\exists u',v',w' \in L(\Pi))(u'\Theta = u \land v'\Theta = v \land w'\Theta = w \land u'\#_1v'\#(w')^\rev \in M(\Pi)) \\
    \iff{}& (\exists u',v',w' \in L(\Pi))(u'\Theta = u \land v'\Theta = v \land w'\Theta = w \land (u'\psi)(v'\psi) = w'\psi) \\
    \iff{}& (\exists u',v',w' \in L(\Pi))(u'\Theta = u \land v'\Theta = v \land w'\Theta = w \land (u'\Theta\phi)(v'\Theta\phi) = w'\Theta\phi) \\
    \iff{}& u,v,w \in L(\Sigma) \land (u\phi)(v\phi) = w\phi).
  \end{align*}
  Thus $\phi$ is an interpretation of $\Sigma$.
\end{proof}

In order to compute with the semigroup described by a word-hy\-per\-bol\-ic
structure, interpretations must be coded in a finite way.

\begin{definition}
An \defterm{assignment of generators} for a word-hy\-per\-bol\-ic structure
$\Sigma$ is a map $\alpha : A(\Sigma) \to L(\Sigma)$ with the property
that there is some interpretation $\phi : A(\Sigma)^+ \to S$ such that
$a\alpha\phi = a\phi$ for all $a \in A$; such an interpretation is
said to be \defterm{consistent} with $\alpha$. Two assignments of
generators $\alpha$ and $\beta$ for $\Sigma$ are \defterm{equivalent}
if $(a\alpha,a\beta) \in E(\Sigma)$ for all $a \in A(\Sigma)$.
\end{definition}

\begin{proposition}
An assignment of generators for a word-hy\-per\-bol\-ic structure is
consistent with a unique interpretation (up to automorphic permutation
of the semigroup described). Equivalent assignments of generators are
consistent with the same interpretation.

Conversely, every interpretation is consistent with a unique (up to
equivalence) assignment of generators.
\end{proposition}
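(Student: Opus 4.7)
The plan is to derive all three claims from \fullref{Lemma}{lem:equality} and \fullref{Proposition}{prop:isomorphic}, recalling that the relevant behaviour of an interpretation is its action on $L(\Sigma)$ (and in particular on the generators), since the consistency and equivalence conditions only probe the values $a\phi$ and $(a\alpha)\phi$ for $a \in A(\Sigma)$.

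For the uniqueness assertion, suppose $\alpha$ is consistent with two interpretations $\phi : A^+ \to S$ and $\phi' : A^+ \to T$. \fullref{Proposition}{prop:isomorphic} furnishes an isomorphism $\tau : S \to T$ satisfying $\phi|_L\tau = \phi'|_L$. Since $a\alpha \in L$ for each $a \in A(\Sigma)$, the consistency equations combine to give
\[
(a\phi)\tau = ((a\alpha)\phi)\tau = (a\alpha)\phi' = a\phi',
\]
so $\tau$ carries $\phi$ to $\phi'$ on the generators, which is exactly what ``unique up to automorphic permutation'' demands. For the statement about equivalent assignments, suppose $\alpha$ and $\beta$ are equivalent and that $\phi$ is consistent with $\alpha$. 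Then $(a\alpha, a\beta) \in E(\Sigma)$ for every $a \in A(\Sigma)$, and \fullref{Lemma}{lem:equality} in the direction (1)$\Rightarrow$(3) yields $(a\alpha)\phi = (a\beta)\phi$; combining this with $(a\alpha)\phi = a\phi$ produces $(a\beta)\phi = a\phi$, so $\phi$ is consistent with $\beta$.

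For the converse direction, start with an arbitrary interpretation $\phi : A^+ \to S$. Given $a \in A(\Sigma)$, surjectivity $L\phi = S$ lets us choose some $w_a \in L(\Sigma)$ with $w_a\phi = a\phi$; setting $a\alpha = w_a$ produces an assignment of generators consistent with $\phi$ by construction. If $\alpha'$ is any other assignment consistent with $\phi$, then $(a\alpha)\phi = a\phi = (a\alpha')\phi$ for every $a$, and \fullref{Lemma}{lem:equality} in the direction (2)$\Rightarrow$(1) delivers $(a\alpha, a\alpha') \in E(\Sigma)$, so $\alpha$ and $\alpha'$ are equivalent. No genuine obstacle arises; the only point requiring care is that \fullref{Lemma}{lem:equality} has to be invoked in opposite directions in the two halves of the argument, once to convert $E(\Sigma)$-equivalence into equality of images under a given $\phi$, and once to recover $E(\Sigma)$-equivalence from such an equality.
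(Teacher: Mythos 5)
Your proof is correct and follows essentially the same route as the paper's: Proposition~\ref{prop:isomorphic} for uniqueness of the interpretation up to (auto)morphism, and Lemma~\ref{lem:equality} applied in the directions (1)$\Rightarrow$(3) and (2)$\Rightarrow$(1) for the two statements about equivalence of assignments. The only difference is that you also spell out the existence of an assignment consistent with a given interpretation via surjectivity of $\phi|_L$, a small point the paper's proof leaves implicit.
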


\begin{proof}
Let $\Sigma$ be a word-hy\-per\-bol\-ic structure and $\alpha : A \to L$ an
assignment of generators. Then there is an interpretation $\phi : A^+
\to S$ of $\Sigma$ that is consistent with $\alpha$; that is,
$a\alpha\phi = a\phi$ for all $a \in A$.

Let $\psi : A^+ \to S$ be another interpretation of $\Sigma$ that is
consistent with $\alpha$; the aim is to show that $\phi$ and $\phi$
differ only by an automorphic permutation of $S$. First, $a\alpha\psi
= a\psi$ for all $a \in A$, since $\psi$ is consistent with
$\alpha$. By \fullref{Proposition}{prop:isomorphic}, there is an
automorphism $\tau$ of $S$ such that $\phi|_L\tau = \psi|_L$, and so
$a\phi\tau = a\alpha\phi\tau = a\alpha\psi = a\psi$ for all $a \in
A$. Hence $\phi$ and $\psi$ differ only by the automorphism $\tau$.

Now let $\beta : A \to L$ be an assignment of generators equivalent to
$\alpha$; the aim is to show that $\beta$ is also consistent with
$\phi$. Now, $(a\alpha,a\beta) \in E(\Sigma)$ for all $a \in A$ since
$\alpha$ and $\beta$ are equivalent. Thus $a\phi = a\alpha\phi =
a\beta\phi$ by \fullref{Lemma}{lem:equality}, and hence $\beta$ is
also consistent with the interpretation $\phi$.

Finally, suppose $\gamma : A \to L$ is an assignment of generators
consistent with $\phi$; the aim is to show $\alpha$ and $\beta$ are
equivalent. Now, $a\alpha\phi = a\phi = a\gamma\phi$ for all $a \in A$
since $\alpha$ and $\gamma$ are consistent with $\phi$. Hence
$(a\alpha,a\gamma) \in E(\Sigma)$ for all $a \in A$ by
\fullref{Lemma}{lem:equality}.
\end{proof}

\begin{definition}
A word-hy\-per\-bol\-ic structure $\Sigma$ is said to be an
\defterm{interpreted} word-hy\-per\-bol\-ic structure if it is equipped with
an assigment of generators $\alpha(\Sigma)$.
\end{definition}

\begin{proposition}
\label{prop:gensarewords}
Let $\Sigma$ be an interpreted word-hy\-per\-bol\-ic structure for a
semigroup $S$. Then there is another interpreted word-hy\-per\-bol\-ic
structure $\Sigma'$ for $S$, effectively computable from $\Sigma$,
such that $A(\Sigma') \subseteq L(\Sigma')$ and $\alpha(\Sigma')$ is
the embedding map from $A(\Sigma')$ to $L(\Sigma')$.
\end{proposition}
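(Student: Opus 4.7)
The plan is to modify $\Sigma$ by adjoining each letter $a \in A(\Sigma)$ to $L(\Sigma)$ as a length-$1$ representative of the element $a\phi = (a\alpha(\Sigma))\phi$, so that the embedding $a \mapsto a$ becomes a valid assignment of generators. Concretely, set $A(\Sigma') = A(\Sigma)$, $L(\Sigma') = L(\Sigma) \cup A(\Sigma)$ (viewing each letter of $A(\Sigma)$ as a length-$1$ word), and take $\alpha(\Sigma')$ to be the embedding map $a \mapsto a$. Let $\phi : A(\Sigma)^+ \to S$ be an interpretation of $\Sigma$ consistent with $\alpha(\Sigma)$; this same $\phi$ will serve as the interpretation of $\Sigma'$, since $A(\Sigma')^+ = A(\Sigma)^+$. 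The injectivity of $\phi|_{A(\Sigma')}$ and the equality $L(\Sigma')\phi = S$ both follow from the corresponding properties of $\phi$, and $\phi$ is trivially consistent with $\alpha(\Sigma')$ because $a\alpha(\Sigma') = a$ for each $a$. The language $L(\Sigma')$ is effectively computable from a finite automaton for $L(\Sigma)$ by a simple union with a finite automaton accepting $A(\Sigma)$.

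The substantive work is to construct $M(\Sigma')$, which by \fullref{Definition}{def:improved} must be
\[
\{u\#_1v\#_2w^\rev : u,v,w \in L(\Sigma'),\ (u\phi)(v\phi) = w\phi\},
\]
and to show that this language is context-free and effectively computable from $\Sigma$ and $\alpha(\Sigma)$. The key observation is that for every $a \in A(\Sigma) \setminus L(\Sigma)$ we have $a\phi = (a\alpha(\Sigma))\phi$ with $a\alpha(\Sigma) \in L(\Sigma)$. Thus a triple $(u,v,w) \in L(\Sigma')^3$ satisfies the product condition if and only if the triple obtained by replacing each single-letter occupant $a \in A(\Sigma) \setminus L(\Sigma)$ by $a\alpha(\Sigma) \in L(\Sigma)$ lies in $M(\Sigma)$.

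To realise this, I would partition $L(\Sigma') = L(\Sigma) \cup (A(\Sigma) \setminus L(\Sigma))$ into a regular part and a finite part, and decompose $M(\Sigma')$ as a finite union over the eight possible membership patterns for $(u,v,w)$. In each case the resulting language is obtained from $M(\Sigma)$ by intersecting with a regular language of the form $X_1\#_1X_2\#_2X_3^\rev$, where each $X_i$ is either $L(\Sigma)$ or a singleton $\{a\alpha(\Sigma)\}$ for some fixed $a \in A(\Sigma) \setminus L(\Sigma)$, and then substituting the single letter $a$ for the block $a\alpha(\Sigma)$ in each slot where $X_i$ is a singleton. Both intersection with a regular language and substitution of a fixed string by another fixed string preserve context-freeness and are effective on a grammar or pushdown automaton for $M(\Sigma)$, so $M(\Sigma')$ is context-free and effectively constructible.

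I do not foresee a serious obstacle; the only point requiring care is to restrict the letter-for-word substitutions to letters in $A(\Sigma) \setminus L(\Sigma)$, so that letters that already belong to $L(\Sigma)$ are not tampered with and the original triples of $M(\Sigma)$ remain in $M(\Sigma')$ via the case in which all three slots lie in $L(\Sigma)$. Once the eight pieces are assembled, $\Sigma'$ together with $\alpha(\Sigma')$ is the required interpreted word-hyperbolic structure for $S$.
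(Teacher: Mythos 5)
Your proposal is correct and follows essentially the same route as the paper: take $A(\Sigma')=A(\Sigma)$, $L(\Sigma')=L(\Sigma)\cup A(\Sigma)$, build $M(\Sigma')$ by an eight-case decomposition according to whether each of $u,v,w$ is a letter or a word of $L(\Sigma)$, obtaining each piece from $M(\Sigma)$ by intersection with a regular language followed by replacing the fixed block $a\alpha(\Sigma)$ (or its reverse in the third slot) by the letter $a$, and take $\alpha(\Sigma')$ to be the embedding. If anything, your explicit mention of the substitution step is slightly more careful than the paper, which justifies context-freeness only via the intersection.
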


\begin{proof}
Let $A(\Sigma')$ be $A(\Sigma)$ and $L(\Sigma') = L(\Sigma) \cup
A(\Sigma)$. For brevity, let $\alpha = \alpha(\Sigma)$. For each
$a,b,c \in A(\Sigma')$, define the languages:
\begin{align*}
M^{(1)}_a &= \gset{a\#_1v\#_2w^\rev }{(a\alpha)\#_1v\#_2w^\rev \in M(\Sigma)}, \\
M^{(2)}_a &= \gset{u\#_1a\#_2w^\rev }{u\#_1(a\alpha)\#_2w^\rev \in M(\Sigma)}, \displaybreak[0]\\
M^{(3)}_a &= \gset{u\#_1v\#_2a^\rev}{u\#_1v\#_2(a\alpha)^\rev \in M(\Sigma)}, \displaybreak[0]\\
M^{(4)}_{a,b} &= \gset{a\#_1b\#_2w^\rev}{(a\alpha)\#_1(b\alpha)\#_2w^\rev \in M(\Sigma)}, \displaybreak[0]\\
M^{(5)}_{a,b} &= \gset{u\#_1a\#_2b^\rev}{u\#_1(a\alpha)\#_2(b\alpha)^\rev \in M(\Sigma)}, \displaybreak[0]\\
M^{(6)}_{a,b} &= \gset{a\#_1v\#_2b^\rev}{(a\alpha)\#_1v\#_2(b\alpha)^\rev \in M(\Sigma)}, \\
M^{(7)}_{a,b,c} &= \gset{a\#_1b\#_2c^\rev}{(a\alpha)\#_1(b\alpha)\#_2(c\alpha)^\rev \in M(\Sigma)}.
\end{align*}
Each of these languages is context-free because each is the
intersection of the context-free language $M(\Sigma)$ with a regular
language. [Notice that $M^{(7)}_{a,b,c}$ is either empty or a
  singleton language.]

Now let
\begin{align*}
M(\Sigma') = M(\Sigma) &\cup \bigcup_{a \in A(\Sigma)} \bigl(M^{(1)}_a \cup M^{(2)}_b \cup M^{(3)}_c\bigr) \\
&\cup \bigcup_{a,b \in A(\Sigma)} \bigl(M^{(4)}_{a,b} \cup M^{(5)}_{a,b} \cup M^{(6)}_{a,b}\bigr) \\
&\cup \bigcup_{a,b,c \in A(\Sigma)} M^{(7)}_{a,b,c};
\end{align*}
notice that $M(\Sigma')$ is also context-free.

Let $\phi : A(\Sigma) \to S$ be an interpretation of $\Sigma$. Then,
recalling that $A(\Sigma) = A(\Sigma')$,
\[
M(\Sigma') = \gset{u\#_1v\#_2w^\rev}{u,v,w \in L(\Sigma'), (u\phi)(v\phi) = w\phi},
\]
because $u$, $v$, and $w$ range over $L(\Sigma') = L(\Sigma) \cup
A(\Sigma)$, and the eight cases that arise depending on whether each
word lies in $L(\Sigma)$ or $A(\Sigma)$ correspond to the eight sets
$M(\Sigma)$, $M^{(1)}_a$, $M^{(2)}_a$, $M^{(3)}_a$, $M^{(4)}_{a,b}$,
$M^{(5)}_{a,b}$, $M^{(6)}_{a,b}$, and $M^{(7)}_{a,b,c}$. [Notice that
  these sets are not necessarily disjoint, since it is possible that
  $a\alpha = a$ for some $a \in A$.]

Finally, define $\alpha(\Sigma')$ to be the embedding map from
$A(\Sigma')$ to $L(\Sigma')$. This map is an assignment of generators
since trivially $((a)\alpha(\Sigma'))\phi = a\phi$ for any
interpretation $\phi$ of $\Sigma'$.
\end{proof}

In light of \fullref{Proposition}{prop:gensarewords}, we will assume
without further comment that an interpreted word-hy\-per\-bol\-ic structure
$\Sigma$ has the property that $A(\Sigma) \subseteq L(\Sigma)$ and
that $\alpha(\Sigma)$ is the embedding map from $A(\Sigma)$ to
$L(\Sigma)$. Notice further that the computational effectiveness
aspect of \fullref{Proposition}{prop:gensarewords} ensures we are free
to assume that an interpreted word-hy\-per\-bol\-ic structure serving as
input to a decision problem has this property.

For automatic semigroups, it is possible to assume that the automatic structure has a further pleasant property, namely
that every element of the semigroup is represented by a unique word in the language of representatives
\cite[Proposition~2.9(iii)]{kambites_decision}. However, there exist word-hy\-per\-bol\-ic semigroups (indeed,
word-hy\-per\-bol\-ic monoids) that do not admit word-hy\-per\-bol\-ic structures where the languages of representatives
have this uniqueness property \cite[Examples~10 \&~11]{cm_wordhypunique}.

\section{Other background }

\subsection{String-rewriting systems}

This section recalls the necessary basic definitions and terminology on string-rewriting systems and their connection to
semigroup presentations; \cite{book_srs}, and \cite{baader_termrewriting} for further background reading.

A \defterm{string rewriting system}, or simply a \defterm{rewriting
  system}, is a pair $(A,\drel{R})$, where $A$ is a finite alphabet and
$\drel{R}$ is a set of pairs $(\ell,r)$, usually written $\ell
\imreduces r$, known as \defterm{rewriting rules} or simply
\defterm{rules}, drawn from $A^* \times A^*$. The single reduction
relation $\imreduces_{\drel{R}}$ is defined as follows: $u
\imreduces_{\drel{R}} v$ (where $u,v \in A^*$) if there exists a
rewriting rule $(\ell,r) \in \drel{R}$ and words $x,y \in A^*$ such
that $u = x\ell y$ and $v = xry$. That is, $u \imreduces_{\drel{R}} v$
if one can obtain $v$ from $u$ by substituting the word $r$ for a
subword $\ell$ of $u$, where $\ell \imreduces r$ is a rewriting
rule. The reduction relation $\reduces_{\drel{R}}$ is the reflexive and
transitive closure of $\imreduces_{\drel{R}}$. The subscript $\drel{R}$
is omitted when it is clear from context. The process of replacing a
subword $\ell$ by a word $r$, where $\ell \imreduces r$ is a rule, is
called \defterm{reduction} by application of the rule $\ell \imreduces
r$; the iteration of this process is also called reduction. A word $w
\in A^*$ is \defterm{reducible} if it contains a subword $\ell$ that
forms the left-hand side of a rewriting rule in $\drel{R}$; it is
otherwise called \defterm{irreducible}.

The rewriting system $(A,\drel{R})$ is \defterm{finite} if both $A$ and $\drel{R}$ are finite. The rewriting system
$(A,\drel{R})$ is \defterm{noetherian} if there is no infinite sequence $u_1,u_2,\ldots$ of words from $A^*$ such that
$u_i \imreduces u_{i+1}$ for all $i \in \nset$. That is, $(A,\drel{R})$ is noetherian if any process of reduction must
eventually terminate with an irreducible word. The rewriting system $(A,\drel{R})$ is \defterm{confluent} if, for any
words $u, u',u'' \in A^*$ with $u \reduces u'$ and $u \reduces u''$, there exists a word $v \in A^*$ such that $u'
\reduces v$ and $u'' \reduces v$.  It is well known that a noetherian system is confluent if and only if all critical
pairs resolve, where critical pairs are obtained by considering overlaps of left hand sides of the rewrite rules
$\drel{R}$; see \cite{book_srs} for more details.  A rewriting system that is both confluent and noetherian is
\defterm{complete}.

The rewriting system $(A,\drel{R})$ is \defterm{length-reducing} if $(\ell,r) \in \drel{R}$ implies that $|\ell| > |r|$.
Observe that any length-reducing rewriting system is necessarily Noetherian. The rewriting system $(A,\drel{R})$ is
\defterm{monadic} if it is length-reducing and the right-hand side of each rewrite rule in $\drel{R}$ lies in
$A \cup \{\emptyword\}$. A monadic rewriting system $(A,\drel{R})$ is \defterm{regular} (respectively,
\defterm{context-free}) if, for each $a \in A \cup\{\emptyword\}$, the set of all left-hand sides of rewrite rules in
$\drel{R}$ with right-hand side $a$ is a regular (respectively, context-free) language.

The \defterm{Thue congruence} $\thue_{\drel{R}}$ is the equivalence
relation generated by $\imreduces_{\drel{R}}$. The elements of the monoid
presented by $\pres{A}{\drel{R}}$ are the $\thue_{\drel{R}}$-equivalence
classes. The relations $\thue_{\drel{R}}$ and $\cgen{\drel{R}}$ coincide.

\subsection{Automaticity}

This subsection contains the definitions and basic results from the theory of automatic and biautomatic monoids needed
hereafter. For further information on automatic semigroups, see~\cite{campbell_autsg}.

\begin{definition}
Let $A$ be an alphabet and let $\$$ be a new symbol not in $A$. Define
the mapping $\rpad : A^* \times A^* \to ((A\cup\set{\$})\times (A\cup
\set{\$}))^*$ by
\[
(u_1\cdots u_m,v_1\cdots v_n) \mapsto
\begin{cases}
(u_1,v_1)\cdots(u_m,v_n) & \text{if }m=n,\\
(u_1,v_1)\cdots(u_n,v_n)(u_{n+1},\$)\cdots(u_m,\$) & \text{if }m>n,\\
(u_1,v_1)\cdots(u_m,v_m)(\$,v_{m+1})\cdots(\$,v_n) & \text{if }m<n,
\end{cases}
\]
and the mapping $\lpad : A^* \times A^* \to ((A\cup\set{\$})\times (A\cup \set{\$}))^*$ by
\[
(u_1\cdots u_m,v_1\cdots v_n) \mapsto
\begin{cases}
(u_1,v_1)\cdots(u_m,v_n) & \text{if }m=n,\\
(u_1,\$)\cdots(u_{m-n},\$)(u_{m-n+1},v_1)\cdots(u_m,v_n) & \text{if }m>n,\\
(\$,v_1)\cdots(\$,v_{n-m})(u_1,v_{n-m+1})\cdots(u_m,v_n) & \text{if }m<n,
\end{cases}
\]
where $u_i,v_i \in A$.
\end{definition}

\begin{definition}
\label{def:autstruct}
Let $M$ be a monoid. Let $A$ be a finite alphabet representing a set
of generators for $M$ and let $L \subseteq A^*$ be a regular language such
that every element of $M$ has at least one representative in $L$.  For
each $a \in A \cup \set{\emptyword}$, define the relations
\begin{align*}
L_a &= \gset{(u,v)}{u,v \in L, \elt{ua} = \elt{v}}\\
{}_aL &= \gset{(u,v)}{u,v \in L, \elt{au} = \elt{v}}.
\end{align*}
The pair $(A,L)$ is an \defterm{automatic structure} for $M$ if $L_a\rpad$ is a regular language over $(A\cup\set{\$})
\times (A\cup\set{\$})$ for all $a \in A \cup \set{\emptyword}$. A monoid $M$ is \defterm{automatic} if it admits an
automatic structure with respect to some generating set.

The pair $(A,L)$ is an \defterm{asynchronous automatic structure} for $M$ if $L_a$ is a rational relation for all $a \in
A \cup \set{\emptyword}$. A monoid $M$ is \defterm{asychronously automatic} if it admits an asynchronous automatic
structure with respect to some generating set.

The pair $(A,L)$ is a \defterm{biautomatic structure} for $M$ if $L_a\rpad$, ${}_aL\rpad$, $L_a\lpad$, and ${}_aL\lpad$
are regular languages over $(A\cup\set{\$}) \times (A\cup\set{\$})$ for all $a \in A \cup \set{\emptyword}$. A monoid $M$ is
\defterm{biautomatic} if it admits a biautomatic structure with respect to some generating set.

The pair $(A,L)$ is an \defterm{asynchronous biautomatic structure} for $M$ if $L_a$, ${}_aL$, $L_a$, and ${}_aL$ are
rational languages for all $a \in A \cup \set{\emptyword}$. A monoid $M$ is \defterm{asynchronouly biautomatic} if it
admits an asynchronous biautomatic structure with respect to some generating set.
\end{definition}

Unlike in the situation for groups, biautomaticity and automaticity for semigroups is dependent on the choice of generating
set \cite[Example~4.5]{campbell_autsg}. However, for monoids, biautomaticity and automaticity are independent of the
choice of \emph{semigroup} generating sets \cite[Theorem~1.1]{duncan_change}.

Note that biautomaticity implies automaticity and asynchronous biautomacitity, and both of these properties imply
asynchronous automaticity.

Hoffmann \& Thomas have made a careful study of biautomaticity for semigroups \cite{hoffmann_biautomatic}. They
distinguish four notions of biautomaticity for semigroups, which are all equivalent for groups and more generally for
cancellative semigroups \cite[Theorem~1]{hoffmann_biautomatic} but distinct for semigroups \cite[Remark~1 \&
\S~4]{hoffmann_biautomatic}. In the sense used in this paper, `biautomaticity' implies \emph{all four} of these notions
of biautomaticity.

Although automaticity for semigroups (unlike for groups) is dependent on the choice of generating set
\cite[Example~4.5]{campbell_autsg}, asynchronous automaticity is maintained under change of generators:

\begin{proposition}[{\cite[Proposition~4.1]{hoffmann_relatives}}]
  \label{prop:asynchautochangegen}
  Let $S$ be an asynchronously automatic semigroup, and let $A$ be a finite alphabet representing a generating set for
  $S$. Then there is a regular language $L$ over $A$ such that $(A,L)$ is an asynchronous automatic structure for $S$.
\end{proposition}

\subsection{Normal forms of context-free grammars}

This subsection recalls some slightly less well-known terminology about normal forms for context-free grammars, and
proves a technical lemma.

Consider a context-free grammar $\Gamma = (N,A,P,S)$ (where $N$ is the non-terminal alphabet, $A$ the terminal alphabet,
$P$ the set of productions, and $S$ the start symbol). The \defterm{size} of $\Gamma$, denoted $|\Gamma|$, is the sum of
the lengths of the right-hand sides of the productions in $P$; that is,
$|\Gamma| = \sum \gset{|\alpha|}{X \to \alpha \in P}$. A \defterm{unit production} (or \defterm{chain rule}) is a
production of the form $X \to Y$ for $X,Y \in N$. The grammar $\Gamma$ is $\emptyword$-free if every production is of
one of the two forms
\[
X \to \alpha \text{ with $\alpha \in (A \cup N - \set{S})^+$}; \qquad S \to \emptyword.
\]
The grammar $\Gamma$ is in \defterm{Chomsky normal form} if every production is of
one of the three forms
\begin{equation}
\label{eq:cnfprods}
X \to YZ \text{ with $Y,Z \in N - \set{S}$}; \qquad X \to a \text{ with $a \in A$}; \qquad S \to \emptyword.
\end{equation}
It is in \defterm{extended Chomsky normal form} if every production is either a unit production or of one of the forms
\eqref{eq:cnfprods}.
The grammar $\Gamma$ is in \defterm{Greibach normal form} if every
production is of one of the two forms
\begin{equation}
\label{eq:gnfprods}
X \to a\alpha \text{ with $a \in A$ and $\alpha \in (N - \set{S})^+$}; \qquad S \to \emptyword.
\end{equation}
It is in \defterm{extended Greibach normal form} if every production is either a unit production or of one of the forms
\eqref{eq:gnfprods}. The grammar $\Gamma$ is in \defterm{quadratic (extended) Greibach normal form} (or \defterm{$2$
  (extended) Greibach normal form}) if every production of the form $X \to a\alpha$ satisfies $|\alpha| \leq 2$.

\begin{lemma}
  \label{lem:conversion}
  There is an algorithm that takes as input an $\emptyword$-free context-free grammar $\Gamma$ and outputs a quadratic
  Greibach normal form grammar $\Gamma_{\mathrm{G}}$, taking time $\bigO(|\Gamma|^2)$.
\end{lemma}

\begin{proof}
  The strategy is to follow the construction used by Blum \& Koch \cite[Paragraph following Theorem~2.1]{blum_greibach}
  and note the time complexity at each stage.

  The first step is to convert $\Gamma$ to an extended Chomsky normal form grammar $\Gamma_{\mathrm{EC}}$; this takes
  time $\bigO(|\Sigma|)$ by inspection of the usual construction (see, for example, \cite[Proof of
  Theorem~4.5]{hopcroft_automata}, ignoring the removal of unit productions), and $|\Gamma_{\mathrm{EC}}|$ is at most a
  constant multiple of $|\Gamma|$.

  The next step is Blum \& Koch's own construction \cite[p.116]{blum_greibach} to convert $\Gamma_{\mathrm{EC}}$ to an
  quadratic Greibach normal form grammar $\Gamma_{\mathrm{eg}}$. This involves first constructing auxiliary grammars
  $\Gamma_X$ for all $X$ in $N - \set{S}$; by inspection this takes time $\bigO(|\Gamma_{\mathrm{EC}}|)$ for each $X$,
  and thus $\bigO(|\Gamma_{\mathrm{EC}}|^2)$ time in total, and the grammars $\Gamma_X$ have size at most
  $3|\Gamma_{\mathrm{EC}}|$. The final construction of the quadratic Greibach normal form grammar $\Gamma_{\mathrm{G}}$
  from $\Gamma_{\mathrm{EC}}$ and the various $\Gamma_X$ thus takes time $\bigO(|\Gamma_{\mathrm{EC}}|^2)$.

  Since $|\Gamma_{\mathrm{EC}}|$ is at most a constant multiple of $|\Gamma|$, the construction of $\Gamma_{\mathrm{G}}$
  takes time $\bigO(|\Gamma|^2)$.
\end{proof}

\section{Isomorphism problem \& automaticity}
\label{sec:isoundec}

This section proves that the isomorphism problem, and the problem of deciding automaticity, are both undecidable for
word-hyperbolic semigroups. [Recall that, as noted in the introduction, a word-hyperbolic monoid is not necessarily
automatic or asynchronously automatic \cite[Example~7.7 et~seq.]{hoffmann_relatives}.]  The strategy is to reduce
standard undecidable questions for context-free grammars to these decision problem for word-hyperbolic semigroups; this
will show that these problems are also undecidable. The two undecidable questions we will use are the problems of
deciding, from a given context-free grammar, whether it defines the language of all words over an alphabet, and whether
it defines a regular language:

\begin{theorem}[{\cite[Theorem~8.11]{hopcroft_automata}}]
  \label{thm:cfgallwordsundec}
  There is no algorithm that takes as input a context-free grammar $\Gamma$ over a finite alphabet $B$ and decides whether
  $L(\Gamma) = B^*$.
\end{theorem}

\begin{theorem}[{\cite[Theorem~8.15]{hopcroft_automata}}]
  \label{thm:cfgregularundec}
  There is no algorithm that takes as input a context-free grammar $\Gamma$ over a finite alphabet $B$ and decides whether
  $L(\Gamma)$ is regular.
\end{theorem}

The key to encoding these undecidability results into decision problems for word-hyperbolic semigroups is the
following result, due to the first author and Maltcev:

\begin{theorem}[{\cite[Theorem~3.1]{cm_wordhypunique}}]
  \label{thm:contextfreesrs}
  Let $(A,\drel{R})$ be a confluent context-free monadic rewriting system where $\drel{R}$ does not contain rewriting
  rules with $\emptyword$ on the right-hand side. Then there is an intepreted word-hyperbolic structure $\Sigma$ for the
  semigroup presented by $\pres{A}{\drel{R}}$ such that $A(\Sigma) = A$ and $L(\Sigma) = A^*$. Furthermore, $\Sigma$ can
  be effectively constructed from context-free grammars describing $\drel{R}$.
\end{theorem}

(The preceding result was originally stated for monoids, allowing $\drel{R}$ to contain rules with $\emptyword$ on the
right-hand side; it is immediate that it holds in this form for semigroups. The `effective construction' part follows
easily by inspecting the construction of the word-hyperbolic structure in the proof.)

\begin{lemma}
  \label{lem:cfgundecconst}
  Let $\Gamma$ be a context-free grammar over a finite alphabet $B$. Let $x,y,z$ be new symbols not in $B$ and let $A =
  B \cup \set{x,y,z}$. Define $\drel{R}_1$, $\drel{R}_2$, and $\drel{Z}$ as follows:
  \begin{align*}
    \drel{R}_1 &= \gset[\big]{xwy \to z}{w \in L(\Gamma)}, \\
    \drel{R}_2 &= \gset[\big]{xwy \to z}{w \in B^*}, \\
    \drel{Z} &= \gset[\big]{az \to z,za \to z}{a \in A}.
  \end{align*}
  Let $S_1$ and $S_2$ be the semigroups presented by, respectively, $\pres{A}{\drel{R}_1 \cup \drel{Z}}$ and
  $\pres{A}{\drel{R}_2 \cup \drel{Z}}$. Then:
  \begin{enumerate}
  \item $S_1$ and $S_2$ are word-hyperbolic, with effectively computable word-hyperbolic structures.
  \item $L(\Gamma) = B^*$ if and only if $S_1$ and $S_2$ are isomorphic.
  \item The following are equivalent:
    \begin{enumerate}
    \item $L(\Gamma)$ is regular;
    \item $S_1$ is biautomatic;
    \item $S_1$ is asynchronously biautomatic;
    \item $S_1$ is automatic;
    \item $S_1$ is asynchronously automatic.
    \end{enumerate}
  \end{enumerate}
\end{lemma}

\begin{proof}
  \begin{enumerate}
  \item Notice first that $(A,\drel{R}_1 \cup \drel{Z})$ is a context-free monadic rewriting system. It is confluent
    because any rewriting must produce a symbol $z$, and so the entire word rewrites to $z$ using rewriting rules in
    $\drel{Z}$. Similarly, $(A,\drel{R}_2 \cup \drel{Z})$ is a confluent context-free monadic rewriting system. By
    \fullref{Theorem}{thm:contextfreesrs}, $S_1$ and $S_2$ have effectively computable word-hyperbolic structures
    $\Sigma_1$ and $\Sigma_2$ such that $A(\Sigma_1) = A$ and $A(\Sigma_2) = A$.  Let $\phi_1 : A(\Sigma_1)^+ \to S_1$
    and $\phi_2 : A(\Sigma_2)^+ \to S_2$ be interpretations of $\Sigma_1$ and $\Sigma_2$.

  \item Suppose that $L(\Gamma) = B^*$. Then $\drel{R}_1 = \drel{R}_2$ and so $S_1$ and $S_2$ are isomorphic.

    Now suppose $S_1$ and $S_2$ are isomorphic. Let $\tau : S_1 \to S_2$ be an isomorphism. Now, $z\phi_1$ and $z\phi_2$
    are the unique zeroes of $S_1$ of $S_2$, so $\tau$ must map $z\phi_1$ to $z\phi_2$. Furthermore, $x\phi_1$ and
    $x\phi_2$ are the unique non-trivial left divisors of $z\phi_1$ and $z\phi_2$, respectively. Hence $\tau$ maps
    $x\phi_1$ to $x\phi_2$. Similarly, $\tau$ maps $y\phi_1$ and $y\phi_2$. Since all elements of $B\phi_1$ and
    $B\phi_2$ are indecomposable, $\tau$ must map $B\phi_1$ to $B\phi_1$ and thus $\tau$ restricts to an isomorphism
    between the free subsemigroups $B^+\phi_1$ and $B^+\phi_2$.

    Suppose, with the aim of obtaining a contradiction, that $L(\Gamma) \subsetneq B^*$. Let
    $u \in B^* \setminus L(\Gamma)$ and let $v \in B^*$ be such that $u\phi_1\tau = v\phi_2$. Then
    $(x\phi_1)(u\phi_1)(y\phi_1) = (xuy)\phi_1 \neq z\phi_1$ since no rewriting rule in $\drel{R}_1 \cup \drel{Z}$ can
    be applied to $xuy$. But $(x\phi_1\tau)(u\phi_1\tau)(y\phi_2\tau) = (xvy)\phi_2 = z\phi_2 = z\phi_1\tau$ by the
    rules in $\drel{R}_2 \cup \drel{Z}$, which contradicts $\tau$ being an isomorphism. Hence $L(\Gamma) = B^*$.

    Thus $L(\Gamma) = B^*$ if and only if $S_1$ and $S_2$ are isomorphic.

  \item Suppose $L(\Gamma)$ is regular. Then $(A,\drel{R}_1 \cup \drel{Z})$ is a regular monadic rewriting system, and
    is confluent by the reasoning in the proof of part~1. Let $L$ be the language of normal forms for
    $(A,\drel{R}_1 \cup \drel{Z})$; it is easy to see that
    \[
    L = (A - \set{z})^+ - A^*xL(\Gamma)yA^* \cup \set{z},
    \]
    and that
    \begin{align*}
      L_\emptyword = {}_\emptyword L &= \gset{(u,u)}{u \in L} \\
      L_a &= \gset{(u,ua)}{u \in L} \qquad\qquad\text{for all $a \in A - \set{y,z}$} \\
      L_y &= \gset{(u,uy)}{u \in L - A^*xL(\Gamma)} \cup \gset{(u,z)}{u \in A^*xL(\Gamma)} \\
      L_z &= \gset{(u,z)}{u \in L}; \\
      {}_aL &= \gset{(au,u)}{u \in L} \qquad\qquad\text{for all $a \in A - \set{x,z}$} \\
      {}_xL &= \gset{(xu,u)}{u \in L - L(\Gamma)yA^*} \cup \gset{(u,z)}{u \in L(\Gamma)yA^*} \\
      {}_zL &= \gset{(u,z)}{u \in L}.
    \end{align*}
    It is easy to see that $L_a\rpad$, $L_a\lpad$, ${}_aL\lpad$ and ${}_aL\rpad$ are all regular for all $a \in A \cup \set{\emptyword}$. Thus $(A,L)$ is a
    biautomatic structure for $S_1$. Thus a) implies b).

    It is clear that b) implies c) and d), and both c) and d) imply e).

    So suppose $S_1$ is asynchronously automatic. By \fullref{Proposition}{prop:asynchautochangegen}, $S_1$ admits an
    asynchronous automatic structure $(A,L)$. If $w \in B^+ - L(\Gamma)$, then $xwy$ is the unique word over $A$
    representing $\elt{xwy} \in S_1$ (since no relation in $\drel{R}_1 \cup \drel{Z}$ can be applied to $xwy$). Hence
    $x(B^+ - L(\Gamma))y \subseteq L$. Note also that the language $K$ of words in $L$ representing $\elt{z}$ is regular, since
    \[
    K = \gset{u \in L}{(u,z) \in L_{\emptyword}}.
    \]
    Since $xwy$ represents $\elt{z}$ if and only if $w \in L(\Gamma)$, it follows that $x(B^+ - L(\Gamma))y = xB^+y -
    K$. So $x(B^+ - L(\Gamma))y$ is regular, and thus $L(\Gamma) = B^+ - x^{-1}(x(B^+ - L(\Gamma)y)y^{-1}$ is
    regular. Thus e) implies a). \qedhere
  \end{enumerate}
\end{proof}

The two undecidability results can now be deduced from the preceding lemma:

\begin{theorem}
  \label{thm:isoundec}
  The isomorphism problem is undecidable for word-hyperbolic semigroups. That is, there is no algorithm that takes as
  input two interpreted word-hyperbolic structures $\Sigma_1$ and $\Sigma_2$ for semigroups $S_1$ and $S_2$ and decides
  whether $S_1$ and $S_2$ are isomorphic.
\end{theorem}

\begin{proof}
  Since there is no algorithm that takes a context-free grammar $\Gamma$ and decides whether $L(\Gamma) = B^*$ by
  \fullref{Theorem}{thm:cfgallwordsundec}, it follows from \fullref[(1,2)]{Lemma}{lem:cfgundecconst} that there is no
  algorithm that takes two interpreted word-hyperbolic structures and decides whether the semigroups they define are
  isomorphic.
\end{proof}

\begin{theorem}
  \label{thm:autoundec}
  It is undecidable whether a word-hyperbolic semigroup is automatic (respectively, asynchronously automatic,
  biautomatic, asynchronously biautomatic). That is, there is no algorithm that takes as input an interpreted
  word-hyperbolic structure for a semigroup $S$ decides whether $S$ is automatic (respectively,
  asynchronously automatic, biautomatic, asynchronously biautomatic).
\end{theorem}

\begin{proof}
  Since there is no algorithm that takes a context-free grammar $\Gamma$ and decides whether $L(\Gamma)$ is regular
  \fullref{THeorem}{thm:cfgregularundec}, it follows from \fullref[(1,3)]{Lemma}{lem:cfgundecconst} that there is no
  algorithm that takes as input an interpreted word-hyperbolic structure and decides whether the semigroup it defines is
  automatic (respectively, asynchronously automatic, biautomatic, asynchronously biautomatic).
\end{proof}

\section{Basic calculations}

This section notes a few very basic facts about computing with word-hy\-per\-bol\-ic structures for semigroups that are
used later in the paper.

\begin{lemma}[{\cite[Lemma~3.6 \& its proof]{hoffmann_relatives}}]
  \label{lem:multiplyingqgnf}
  There is an algorithm that takes as input a word-hyperbolic structure $\Sigma$ for a semigroup, with $M(L)$ being
  specified by a context-free grammar in quadratic Greibach normal form, and two words $p,q \in L(\Sigma)$, and outputs
  a word $r \in L(\Sigma)$ satisfying $\elt{p}\,\elt{q} = \elt{r}$ with $|r| \leq c(|p| + |q|)$ (where $c$ is a constant
  dependent only on $\Sigma$) in time $\bigO((|p|+|q|)^5)$.
\end{lemma}

(Actually, the appearance of this lemma in \cite{hoffmann_relatives} allows $p$ or $q$ to be empty and asserts that
$|r| \leq c(|p| + |q| + 2)$. To obtain the lemma above, where $p$ and $q$ are non-empty, increase $c$
appropriately. Notice that there may be many possibilities for a word $r$ with
$\elt{p}\,\elt{q} = \elt{r}$.)

\begin{lemma}
  \label{lem:checkmultiplying}
  There is an algorithm that takes as input a word-hyperbolic structure $\Sigma$ for a semigroup and three words
  $p,q,r \in L(\Sigma)$, and decides whether $\elt{p}\,\elt{q} = \elt{r}$ in time $\bigO((|p| + |q| + |r|)^3)$.
\end{lemma}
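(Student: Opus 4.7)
The key observation is that by the definition of an interpretation (\fullref{Definition}{def:improved}), for $p,q,r \in L(\Sigma)$ we have $\elt{p}\,\elt{q} = \elt{r}$ if and only if the word $p\#_1 q \#_2 r^\rev$ lies in $M(\Sigma)$. So the decision problem reduces immediately to a membership question for the context-free language $M(\Sigma)$.

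The plan, therefore, is as follows. First, form the word $z = p\#_1 q \#_2 r^\rev$ from the inputs; this takes linear time and $|z| = |p|+|q|+|r|+2$. Second, fix once and for all (during preprocessing that depends only on $\Sigma$) a context-free grammar in Chomsky normal form generating $M(\Sigma)$. Third, run the Cocke--Younger--Kasami algorithm to test whether $z$ is generated by this grammar; CYK decides membership in a fixed context-free language in time $\bigO(|z|^3)$. Return \emph{Yes} if $z \in M(\Sigma)$ and \emph{No} otherwise; correctness is immediate from \fullref{Definition}{def:improved}.

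The running time is $\bigO((|p|+|q|+|r|+2)^3) = \bigO((|p|+|q|+|r|)^3)$, as required. There is no real obstacle here: the whole argument is the standard reduction of a product-checking problem to a context-free membership test, combined with the cubic-time CYK algorithm. The only minor point worth flagging is that the grammar (and hence its Chomsky normal form) depends only on $\Sigma$, so the constant hidden in the $\bigO$ depends on $\Sigma$ but not on the input words, matching the statement of the lemma.
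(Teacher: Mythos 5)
Your proposal is correct and follows exactly the paper's argument: the check $\elt{p}\,\elt{q} = \elt{r}$ reduces to testing whether $p\#_1q\#_2r^\rev \in M(\Sigma)$, which is a cubic-time context-free membership test. Your extra detail (Chomsky normal form and CYK, with the constant depending only on $\Sigma$) simply makes explicit what the paper leaves implicit.
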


\begin{proof}
  The algorithm simply checks whether $p\#_1q\#_2r^\rev \in M(\Sigma)$, and the membership problem for arbitrary
  context-free languages is soluble in cubic time \cite{graham_improved}.
\end{proof}

\section{Word problem}
\label{sec:wordproblem}

This section is dedicated to proving that the uniform word problem for word-hy\-per\-bol\-ic semigroups is soluble in
polynomial time.

As noted in the introduction, the previously-known algorithm required
exponential time \cite[Theorem~3.8]{hoffmann_relatives}. This
motivated Hoffmann \& Thomas to define a narrower notion of
word-hy\-per\-bol\-ic\-ity for monoids that still generalizes
word-hy\-per\-bol\-ic\-ity for groups. By restricting to this version of
word-hy\-per\-bol\-ic\-ity, one recovers automaticity
\cite[Theorem~3]{hoffmann_notions} and an algorithm that runs in time
$\bigO(n\log n)$, where $n$ is the length of the input words
\cite[Theorem~2]{hoffmann_notions}. Although the algorithm described
below is not as efficient as this, the existence of a polynomial-time
solution to the word problem for word-hyperbolic monoids (in the
original Duncan--Gilman sense) diminishes the appeal of the
Hoffmann--Thomas restricted version.

\begin{theorem}
  \label{thm:wordproblem}
  There is an algorithm that takes as input a word-hyperbolic structure $\Sigma$ for a semigroup, where $M(\Sigma)$ is
  defined by a context-free grammar $\Gamma$, and two words $w,w' \in A(\Sigma)^+$ and determines whether
  $\elt{w} = \elt{w'}$ in time polynomial in $|w| + |w'|$ and $|\Gamma|$. More succinctly, the uniform word problem
  for word-hyperbolic semigroups is soluble in polynomial time.
\end{theorem}

\begin{proof}
  By interchanging $w$ and $w'$ if necessary, assume that $|w| \geq
  |w'|$. First, if $|w| = |w'| = 1$, then $w,w' \in A(\Sigma)$ and so
  (since the interpretation map is injective on $A(\Sigma)$), we have
  $\elt{w} = \elt{w'}$ if and only if $w = w'$.

  So assume $|w| \geq 2$. Factorize $w$ as $w = w^{(1)}w^{(2)}$, where
  $w^{(1)} = \lfloor |w|/2\rfloor$. Notice that $\elt{w} = \elt{w'}$ if and
  only if $\elt{w^{(1)}}\,\elt{w^{(2)}} = \elt{w'}$.

  By \fullref{Lemma}{lem:wordproblem} below, there is an algorithm that takes the three words $w^{(1)}$, $w^{(2)}$, and
  $w'$, and the word-hyperbolic structure $\Sigma$, and yields words $u^{(1)}$, $u^{(2)}$, and $u'$ in $L(\Sigma)$
  representing $\elt{w^{(1)}}$, $\elt{w^{(2)}}$, and $\elt{w'}$, of lengths at most $(c+1)|w^{(1)}|^{1+\log(c+1)}$,
  $(c+1)|w^{(2)}|^{1+\log(c+1)}$ and $(c+1)|w'|^{1+\log(c+1)}$, respectively, where $c$ is a constant dependent only on
  $\Sigma$, in time polynomial in $|w^{(1)}|+|w^{(2)}|+|w'|$ and $|\Gamma|$.

  It follows that $\elt{w} = \elt{w'}$ if and only if
  $\elt{u^{(1)}}\,\elt{u^{(2)}} = \elt{u'}$, and, by
  \fullref{Lemma}{lem:checkmultiplying}, this can be checked in time
  cubic in $|u^{(1)}| + |u^{(2)}| + |u'|$, which, by the bounds on
  the lengths of $u^{(1)}$, $u^{(2)}$, and $u'$, is still polynomial
  in the lengths of $w$ and $w'$. Thus the word problem for the
  semigroup described by $\Sigma$ is soluble in polynomial time.
\end{proof}

\begin{lemma}
  \label{lem:wordproblem}
  There is an algorithm that takes as input a word-hyperbolic structure $\Sigma$ for a semigroup, where $M(\Sigma)$ is
  defined by a context-free grammar $\Gamma$, and a word $w \in A(\Sigma)^+$ and outputs a word $u \in L(\Sigma)$ with
  $\elt{w} = \elt{u}$ and $|u| \leq |w|(c+1)|w|^{\log (c+1)}$ (where $c$ is a constant dependent only on $\Sigma$), and
  which takes time polynomial in $|w|$ and $|\Gamma|$.
\end{lemma}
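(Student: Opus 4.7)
The plan is to proceed by divide-and-conquer recursion on $|w|$. In the base case $|w| = 1$, since we work under the convention (justified by \fullref{Proposition}{prop:gensarewords}) that $A(\Sigma) \subseteq L(\Sigma)$, the algorithm simply outputs $u = w$. For $|w| \geq 2$, split $w = w^{(1)}w^{(2)}$ with $|w^{(1)}| = \lfloor |w|/2\rfloor$, recursively compute $u^{(1)}, u^{(2)} \in L(\Sigma)$ with $\elt{u^{(i)}} = \elt{w^{(i)}}$, and then invoke \fullref{Lemma}{lem:multiplying} on the pair $(u^{(1)}, u^{(2)})$ to produce a word $u \in L(\Sigma)$ satisfying $\elt{u} = \elt{u^{(1)}}\,\elt{u^{(2)}} = \elt{w}$.

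For the length bound, let $f(n)$ denote the maximum output length on an input of length $n$. Then $f(1) = 1$ and, by \fullref{Lemma}{lem:multiplying}, $f(n) \leq c\bigl(f(\lfloor n/2\rfloor) + f(\lceil n/2\rceil)\bigr) \leq 2c \cdot f(\lceil n/2\rceil)$ for $n \geq 2$. A straightforward induction on $\lceil \log_2 n\rceil$ yields $f(n) \leq (2c)^{\lceil \log_2 n\rceil} = \bigO(n^{1 + \log_2 c})$, which, after absorbing constants into the base, fits within the claimed bound $|w|(c+1)|w|^{\log(c+1)}$.

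For the running time, the recursion tree has depth $\bigO(\log|w|)$ and branching factor $2$, so there are $\bigO(|w|)$ subproblems in total. At a node whose subproblem has input length $m$, the multiplication step runs in time $\bigO(f(m)^5)$ by \fullref{Lemma}{lem:multiplying}, which is polynomial in $m$. Summing over all nodes then yields total runtime polynomial in $|w|$. The argument is essentially routine divide-and-conquer; the only subtlety worth checking is that although each invocation of \fullref{Lemma}{lem:multiplying} may inflate lengths by up to a factor of $2c$, the compounding effect remains polynomial precisely because the recursion depth is only $\bigO(\log|w|)$, so the final length is $(2c)^{\bigO(\log|w|)} = |w|^{\bigO(1)}$.
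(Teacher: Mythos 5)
Your recursion is the paper's algorithm read top-down: the paper performs the same balanced binary tree of multiplications bottom-up, starting from the sequence of letters of $w$ and pairing adjacent words through $\lceil\log n\rceil$ rounds, and its accounting (each round multiplies the total length by at most $c+1$ and costs time quintic in that total length) is the per-level version of your per-node accounting, so the algorithm and the polynomial-time analysis match the paper's in substance. The only step that does not work as written is the length bound: simplifying the recurrence to $f(n)\leq 2c\,f(\lceil n/2\rceil)$ and concluding $f(n)\leq(2c)^{\lceil\log n\rceil}$, then ``absorbing constants into the base,'' is not legitimate here, because $c$ is a fixed constant determined by $\Sigma$ and the inequality in the statement must hold for that $c$; indeed $(2c)^{\lceil\log n\rceil}$ can exceed $(c+1)n^{1+\log(c+1)}$ (for instance $c=100$, $n=17$ gives roughly $3.2\times10^{11}$ versus $2.7\times10^{11}$), and likewise $2c\,n^{1+\log c}$ need not be below the stated bound for moderate $n$. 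The repair is immediate: keep the recurrence $f(n)\leq c\bigl(f(\lfloor n/2\rfloor)+f(\lceil n/2\rceil)\bigr)$ and prove by induction (using $\lceil\log\lceil n/2\rceil\rceil=\lceil\log n\rceil-1$) that $f(n)\leq n\,c^{\lceil\log n\rceil}\leq n(c+1)n^{\log(c+1)}$, which is exactly the bound claimed in the lemma; for the application in \fullref{Theorem}{thm:wordproblem} any fixed polynomial bound would in fact suffice, but the statement itself asserts the sharper inequality, so you should prove it.
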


\begin{proof}
  The first step is to convert $\Gamma$ to a quadratic Greibach normal form grammar, so that
  \fullref{Lemma}{lem:multiplyingqgnf} can be applied. This takes time $\bigO(|\Gamma|)^2$.

  Suppose $w = w_1\cdots w_n$, where $w_i \in A \subseteq
  L$. Therefore $w_1,\ldots,w_n$ is a sequence of words in $L$
  whose concatenation represents the same element of the semigroup as
  $w$.

  For the purposes of this proof, the \defterm{total length} of a
  sequence $s_1,\ldots,s_\ell$ of words in $A^*$ is defined to be the
  sum of the lengths of the words $|s_1| + \ldots + |s_\ell|$.

  Consider the following computation, which will form the iterative step
  of the algorithm: suppose there is a sequence of words
  $s_1,\ldots,s_\ell$, each lying in $L(\Sigma)$ and each of length at
  most $t$. Notice that $\ell t$ is an upper bound for the total length
  of this sequence. For $i = 1,\ldots, \lfloor\ell/2\rfloor$, apply
  \fullref{Lemma}{lem:multiplyingqgnf} to compute a word $s'_i \in
  L(\Sigma)$ representing $\elt{s_{2i-1}s_{2i}}$ of length at most
  $c(|s_{2i-1}| + |s_{2i}|) \leq 2ct$. For each $i = 1,\ldots,\lfloor
  \ell/2\rfloor$, this takes $\bigO((|s_{2i-1}| + |s_{2i}|)^5)$ time,
  which is at worst $\bigO((2t)^5)$ time. Therefore the total time used
  is at most $\bigO(\lfloor \ell/2\rfloor(2t)^5)$, which is certainly no
  worse than time $\bigO((\ell t)^5)$. That is, the total time used is
  at worst quintic in the upper bound of the total length of the
  original sequence.

  If $\ell$ is odd, set $s'_{\lceil \ell/2\rceil}$ to be $s_\ell$. (If
  $\ell$ is even, $\lceil \ell/2\rceil = \lfloor\ell/2\rfloor$, so
  $s'_{\lceil \ell/2\rceil}$ has already been computed.) This is purely
  notational; no extra computation is done.

  The result of this computation is a sequence of $\lceil \ell/2\rceil$
  words, each of length at most $2ct$, whose concatenation represents
  the same element of the semigroup as the concatenation of the original
  sequence. The total length of the result is at most $(c+1)\ell t$;
  that is, at most $c+1$ times the total length of the previous
  sequence.

  Apply this computation iteratively, starting with the sequence $w_1,\ldots,w_n$ and continuing until a sequence with
  only one element results. Since each iteration takes a sequence with $\ell$ terms to one with $\lceil\ell/2\rceil$
  terms, there are at most $\lceil\log n\rceil$ iterations. The first iteration of this computation, applied to a sequence
  whose total length is at most $n$, completes in time $\bigO(n^5)$, yielding a sequence of total length at most $n(c+1)$;
  the next iteration completes in time $\bigO((n(c+1))^5)$, yielding a sequence of total length at most $n(c+1)^2$. In
  general the $i$-th iteration completes in time at most $\bigO((n(c+1)^{i-1})^5)$, yielding a sequence of total length at
  most $n(c+1)^i$. So the $\lceil\log n\rceil$ iterations together complete in time at most
  $\bigO((1+\log n)(n(c+1)^{1+\log n})^5)$, since $\lceil\log n\rceil \leq 1+\log n$. (Informally, each iteration yields a
  sequence of roughly half as many words in $L(\Sigma)$ labelling a sequence of arcs that each span a subword twice as
  long as the corresponding terms in the preceding sequence.)

  Applying exponent and logarithm laws,
  \begin{align*}
    n(c+1)^{1+\log n}
    &=n(c+1)(c+1)^{\log n} \\
    &=n(c+1)n^{\log(c+1)} \\
    &=(c+1)n^{1+\log(c+1)},
  \end{align*}
  and so, since $c$ is a constant, the algorithm completes in
  time
  \[
  \bigO(n^{5+5\log(c+1)}\log n),
  \]
  yielding a word in $L(\Sigma)$ of length at most $n(c+1)n^{\log (c+1)}$.
\end{proof}

Interestingly, although \fullref{Theorem}{thm:wordproblem} gives a
polynomial-time algorithm for the word problem for word-hy\-per\-bol\-ic
monoids, the proof does not give a bound on the exponent of the
polynomial, because the constant $c$ of
\fullref{Lemma}{lem:multiplyingqgnf} is dependent on the word-hy\-per\-bol\-ic
structure $\Sigma$. There is thus an open question: does such a bound
actually exist? or can the word problem for hyperbolic semigroups be
arbitrarily hard within the class of polynomial-time problems?

The algorithm described in \fullref{Lemma}{lem:wordproblem} is not
particularly novel. It is similar in outline to that described by
Hoffmann \& Thomas \cite[Lemma~11]{hoffmann_notions} for their
restricted notion of word-hy\-per\-bol\-ic\-ity in monoids. However, the proof
that it takes time polynomial in the lengths of the input words
\emph{is} new.

Hoffmann \& Thomas describe their algorithm in recursive terms: to
find a word in $L(\Sigma)$ representing the same element as $w \in
A^*$, factor $w$ as $w'w''$, where the lengths of $w'$ and $w''$
differ by at most $1$, recursively compute representatives $p'$ and
$p''$ in $L(\Sigma)$ of $\elt{w'}$ and $\elt{w''}$, then compute a
representative for $\elt{w}$ using $p'$ and $p''$. This last step they
prove to take linear time (recall that this only applies for their
restricted notion of word-hy\-per\-bol\-ic\-ity) and to yield a word of length
at most $|p'| + |p''| + 1$, which shows that the whole algorithm takes
time $\bigO(n\log n)$. However, this recursive, `top-down' view of the
algorithm obscures the fact that the overall strategy can be made to
work even for monoids that are word-hy\-per\-bol\-ic in the general
Duncan--Gilman sense. It is through the iterative, `bottom-up' view of
the algorithm presented above that it becomes apparent that the length
increase of \fullref{Lemma}{lem:multiplyingqgnf} remains under control
through the $\log n$ iterations.

\section{Deciding basic properties}
\label{sec:basic}

This section shows that certain basic properties are effectively
decidable for word-hy\-per\-bol\-ic semigroups. First, being a monoid is
decidable:

\begin{algorithm}
\label{alg:monoid}
~

\textit{Input:} An interpreted word-hy\-per\-bol\-ic structure $\Sigma$ for a
semigroup.

\textit{Output:} If the semigroup is a monoid (that is, contains a
two-sided identity), output \Yes\ and a word in $L(\Sigma)$ representing the
identity; otherwise output \No.

\textit{Method:}
\begin{enumerate}

\item For each $a \in A$, construct the context-free language
\begin{equation}
\label{eq:monoidalglang1}
I_a = \gset{ i \in L}{a\#_1i\#_2a \in M }
\end{equation}
and check that it is non-empty. If any of these checks fail, halt and
output \No.

\item For each $a \in A$, choose some $i_a \in I_a$.

\item Iterate the following step for each $a \in A$. For each $b \in
  B$, if $\elt{i_a}\,\elt{b} = \elt{b}\,\elt{i_a} = \elt{b}$, halt and
  output \Yes\ and $i_a$.

\item Halt and output \No.
\end{enumerate}
\end{algorithm}

\begin{proposition}
\fullref{Algorithm}{alg:monoid} outputs \Yes\ and $i$ if and only if
the semigroup defined by $\Sigma$ is a monoid with identity $\elt{i}$.
\end{proposition}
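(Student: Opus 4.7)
The plan is to prove both directions of the biconditional. Soundness (if the algorithm outputs \Yes\ then $S$ is a monoid) will be routine; the substantive direction is completeness, namely that whenever $S$ is a monoid the algorithm does in fact succeed. The key observation I would make is that although step 2 chooses $i_a \in I_a$ arbitrarily, there is always at least one $a \in A$ for which every element of $I_a$ is forced to represent the identity of $S$, so the check in step 3 cannot fail for this particular $a$.

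For soundness, I would argue as follows. If the algorithm outputs \Yes\ together with $i = i_a$, then by the criterion verified in step 3, $\elt{i_a}\elt{b} = \elt{b}\elt{i_a} = \elt{b}$ for every $b \in A$. Because $A$ generates $S$, a straightforward induction on the length of a generating product extends this to every element of $S$, so $\elt{i_a}$ is a two-sided identity and $S$ is a monoid with identity $\elt{i}$.

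For completeness, suppose $S$ is a monoid with identity $e$. Since $\elt{L} = S$, some $j \in L$ satisfies $\elt{j} = e$, and then $a\#_1 j \#_2 a \in M$ for every $a \in A$; so each $I_a$ is non-empty and step 1 does not abort. To locate a useful generator, I would use that $e \in \sgrp{A}$ to write $e = \elt{a_1}\elt{a_2}\cdots\elt{a_n}$ with $a_i \in A$, and set $a = a_n$. Then $\ell = \elt{a_1\cdots a_{n-1}}$ (taking $\ell = e$ when $n = 1$) satisfies $\ell\elt{a} = e$, so $\elt{a}$ admits a left inverse and is therefore left-cancellative. Any $i \in I_a$ satisfies $\elt{a}\elt{i} = \elt{a} = \elt{a}\cdot e$, and left-cancellation forces $\elt{i} = e$. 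Hence whatever $i_a$ step 2 picks must represent $e$, and when step 3 reaches this $a$ it will verify $\elt{i_a}\elt{b} = \elt{b} = \elt{b}\elt{i_a}$ for every $b \in A$ (checked using \fullref{Lemma}{lem:checkmultiplying}) and halt with output \Yes\ and $i_a$. The main obstacle is the left-cancellation argument; once that is in place the remainder is bookkeeping.
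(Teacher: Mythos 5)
Your proposal is correct and follows essentially the same route as the paper: both directions are handled the same way, and your key completeness step (take the last generator $a_n$ in a product representing the identity and force every element of $I_{a_n}$ to represent $e$) is exactly the paper's argument, with your appeal to left-cancellation being just a repackaging of the paper's chain of equalities obtained by multiplying $\elt{a_n}\,\elt{i_{a_n}} = \elt{a_n}$ on the left by $\elt{a_1\cdots a_{n-1}}$.
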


\begin{proof}
Suppose first that \fullref{Algorithm}{alg:monoid} halts with output
\Yes\ and $i$. Then by step~3, $\elt{i}\,\elt{b} = \elt{b}\,\elt{i} =
\elt{b}$ for all $b \in A$. Since $\elt{A}$ generates $S$, it follows
that $s\elt{i} = \elt{i}s = s$ for all $s \in S$ and hence $\elt{i}$
is an identity for $S$.

Suppose now that $S$ is a monoid with identity $e$. Then there is some
word $w \in L$ with $\elt{w} = e$. For every $a \in A$, $\elt{a}e =
\elt{a}$, and so $a\#_1w\#_2a \in M$. Thus $w \in I_a$ for all $a
\in A$ and so each $I_a$ is non-empty. Thus the checks in step~1
succeed and the algorithm proceeds to step~2.

Suppose that $w = w_1\cdots w_n$, where $w_j \in A$ for each
$j=1,\ldots,n$. Then
\begin{align*}
e &= \elt{w} = \elt{w_1\cdots w_{n-1}}\,\elt{w_n} \\
&= \elt{w_1\cdots w_{n-1}} \,\elt{w_n}\,\elt{i_{w_n}} && \text{(by the choice of $i_{w_n} \in I_{w_n}$)} \\
&= e\elt{i_{w_n}} \\
&= \elt{i_{w_n}} && \text{(since $e$ is an identity for $S$).}
\end{align*}
Hence $i_{w_n}$ represents the identity $e$ and so
$\elt{i_{w_n}}\,\elt{b} = \elt{b}\,\elt{i_{w_n}} = \elt{b}$. Thus at
least one of the $i_a$ chosen in step~2 passes the test of step~3
(which guarantees that it represents an identity since $\elt{A}$
generates $S$) and so the algorithm halts at step~3 and outputs
\Yes\ and a word $i_a$ representing the identity.
\end{proof}

\begin{question}
Is there an algorithm that takes as input an interpreted
word-hy\-per\-bol\-ic structure and determines whether the semigroup it
defines contains a zero?

Notice that this cannot be decided using a procedure like
\fullref{Algorithm}{alg:monoid}, or at least not obviously, because
the natural analogue of $I_a$ is
\[
Z_a = \gset{ z \in L}{a\#_1z\#_2z^\rev \in M },
\]
which is naturally defined as the intersection of $M$ and
$\gset{u\#_1v\#_2v^\rev}{u,v \in A^+}$. However, testing the emptiness
of an intersection of context-free languages is in general
undecidable. So using $Z_a$ would, at minimum, require some additional
insight into the kind of context-free languages that can appear as
$M$.
\end{question}

Notice that commutativity is very easy to decide for a word-hy\-per\-bol\-ic
semigroup; one needs to check only that $\elt{ab} = \elt{ba}$ for all
symbols $a,b \in A(\Sigma)$. This is simply a matter of performing a
bounded number of multiplications and checks using
\fullref{Lemmata}{lem:multiplyingqgnf} and \ref{lem:checkmultiplying}.

Green's relation $\gL$ is decidable for automatic semigroups; in
contrast, Green's relation $\gR$ is undecidable, as a corollary of
the fact that right-invertibility is undecidable in automatic monoids
\cite[Theorem~5.1]{kambites_decision}. In contrast, $\gR$ and
$\gL$ are both decidable for word-hy\-per\-bol\-ic semigroups, as a
consequence of $M(\Sigma)$ describing the entire multiplication
table.

\begin{proposition}
There is an algorithm that takes as input an interpreted
word-hy\-per\-bol\-ic structure $\Sigma$ and two words $w,w' \in L(\Sigma)$
and decides whether the elements represented by $w$ and $w'$ are:
\begin{enumerate}
\item $\gR$-related,
\item $\gL$-related,
\item $\gH$-related.
\end{enumerate}
\end{proposition}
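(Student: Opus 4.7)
The plan is to reduce each of the three relations to a non-emptiness test for an intersection of a context-free language with a regular language, which is decidable since such an intersection is context-free.

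Recall that in a semigroup $S$, we have $\elt{w} \relR \elt{w'}$ iff $\elt{w}S^1 = \elt{w'}S^1$, which holds iff either $\elt{w} = \elt{w'}$, or there exist $s,t \in S$ with $\elt{w}s = \elt{w'}$ and $\elt{w'}t = \elt{w}$. The first disjunct is decidable by \fullref{Theorem}{thm:wordproblem}. For the second, since the interpretation maps $L(\Sigma)$ onto $S$, the existence of $s \in S$ with $\elt{w}s = \elt{w'}$ is equivalent to the existence of a word $p \in L(\Sigma)$ with $\elt{w}\,\elt{p} = \elt{w'}$, which in turn (since $\phi$ is an interpretation) is equivalent to
\[
N_1 \;=\; M(\Sigma) \cap \bigl(\{w\}\#_1 L(\Sigma)\#_2 \{w'^\rev\}\bigr)
\]
being non-empty. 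The language $\{w\}\#_1 L(\Sigma)\#_2 \{w'^\rev\}$ is regular, so $N_1$ is context-free, and emptiness of a context-free language is decidable. The analogous language
\[
N_2 \;=\; M(\Sigma) \cap \bigl(\{w'\}\#_1 L(\Sigma)\#_2 \{w^\rev\}\bigr)
\]
tests for $t$. Thus $\elt{w} \relR \elt{w'}$ iff $\elt{w} = \elt{w'}$ or both $N_1$ and $N_2$ are non-empty, which is decidable.

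For $\relL$, the argument is entirely symmetric: $\elt{w} \relL \elt{w'}$ iff $\elt{w} = \elt{w'}$ or both of
\[
M(\Sigma) \cap \bigl(L(\Sigma)\#_1 \{w\}\#_2 \{w'^\rev\}\bigr), \qquad M(\Sigma) \cap \bigl(L(\Sigma)\#_1 \{w'\}\#_2 \{w^\rev\}\bigr)
\]
are non-empty, and each of these is a context-free language whose emptiness is decidable.

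Finally, $\relH = \relR \cap \relL$, so the $\relH$ decision is obtained by running both algorithms and taking the conjunction of their outputs. There is no real obstacle here; the key observation is simply that existential questions over elements of $S$ translate, via the surjection $L(\Sigma) \twoheadrightarrow S$, into non-emptiness of $M(\Sigma)$ intersected with an explicit regular language that encodes the shape of the multiplication being sought.
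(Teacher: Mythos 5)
Your proposal is correct and follows essentially the same route as the paper: characterize $\relR$ via $\elt{w}=\elt{w'}$ or the existence of $s,t\in S$ with $\elt{w}s=\elt{w'}$ and $\elt{w'}t=\elt{w}$, decide the equality case by the word-problem theorem, and decide each existential by testing non-emptiness of a context-free language obtained from $M(\Sigma)$ and a regular constraint (the paper phrases it as $\{v\in L: w\#_1v\#_2(w')^\rev\in M\}$, which is equivalent to your intersection for emptiness purposes), then handle $\relL$ symmetrically and $\relH$ as the conjunction.
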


\begin{proof}
Let $S$ be the semigroup described by $\Sigma$. The elements $\elt{w}$
and $\elt{w'}$ are $\gR$-related if and only if there exist $s,t \in
S^1$ such that $\elt{w}s = \elt{w'}$ and $\elt{w'}t = \elt{w}$. That
is, $\elt{w} \gR \elt{w'}$ if and only if either $\elt{w} =
\elt{w'}$, or there exist $s,t \in S$ with $\elt{w}s = \elt{w'}$ and
$\elt{w'}t = \elt{w}$. The possibility that $\elt{w} = \elt{w'}$ can
be checked algorithmically by \fullref{Theorem}{thm:wordproblem}. The
existence of an element $s \in S$ such that $\elt{w}s = \elt{w'}$ is
equivalent to the non-emptiness of the language
\[
\gset{v \in L}{w\#_1v\#_2(w')^\rev \in M}.
\]
This context-free language can be effectively constructed and its
non-emptiness effectively decided. Similarly, it is possible to decide
whether there is an element $t \in S$ such that $\elt{w'}t =
\elt{w}$. Hence it is possible to decide whether $\elt{w} \gR
\elt{w'}$.

Similarly, one can effectively decide whether $\elt{w} \gL
\elt{w'}$. Since $\elt{w} \gH \elt{w'}$ if and only if
$\elt{w} \gR \elt{w'}$ and $\elt{w} \gL \elt{w'}$, whether
$\elt{w}$ and $\elt{w'}$ are $\gH$-related is effectively decidable.
\end{proof}

\begin{corollary}
\label{corol:group}
There is an algorithm that takes as input an interpreted
word-hy\-per\-bol\-ic structure and decides whether the semigroup it
describes is a group.
\end{corollary}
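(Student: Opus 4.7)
The plan is to reduce ``being a group'' to a conjunction of two properties already shown decidable: ``being a monoid'' (Algorithm~\ref{alg:monoid}) and ``every generator is $\relH$-related to the identity'' (the decidability of $\relH$ from the preceding proposition).

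First I run Algorithm~\ref{alg:monoid} on $\Sigma$. A group is in particular a monoid, so if this algorithm outputs \No, I immediately halt with output \No. Otherwise it returns \Yes\ together with a word $i \in L(\Sigma)$ such that $\elt{i}$ is the identity of the semigroup $S$ described by $\Sigma$.

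Assuming $S$ is a monoid with identity $e = \elt{i}$, the semigroup $S$ is a group if and only if every element of $S$ is a unit; equivalently, if and only if the $\relH$-class of $e$ equals all of $S$. Since the set of units of a monoid coincides with $\relH_e$, which is closed under multiplication (it is a subgroup of $S$), and since $\elt{A(\Sigma)}$ generates $S$, it suffices to check that $\elt{a} \relH \elt{i}$ for every $a \in A(\Sigma)$. By our standing assumption (justified by \fullref{Proposition}{prop:gensarewords}) we have $A(\Sigma) \subseteq L(\Sigma)$, so the pair $(a,i)$ is a valid input to the algorithm from the previous proposition that decides $\relH$.

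The final algorithm is therefore: run Algorithm~\ref{alg:monoid}; on \No, output \No; on \Yes\ with witness $i$, invoke the $\relH$-decision procedure on each pair $(a,i)$ for $a \in A(\Sigma)$, outputting \Yes\ if all tests succeed and \No\ otherwise. Correctness is immediate from the discussion above; there is no real obstacle, since the work has been done in the preceding results. The only point meriting care is the observation that one need only test the finitely many generators, rather than all elements of $S$, which relies on the algebraic fact that the group of units is closed under products.
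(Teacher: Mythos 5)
Your proposal is correct and follows essentially the same route as the paper: run Algorithm~\ref{alg:monoid} to find a representative $i$ of the identity, then use the decidability of Green's relations to check that every generator $a \in A(\Sigma)$ is $\relH$-related (equivalently, both $\relR$- and $\relL$-related, which is what the paper checks) to $\elt{i}$, concluding via closure of the group of units under products. No substantive difference from the paper's argument.
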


\begin{proof}
Suppose the input word-hy\-per\-bol\-ic structure is $\Sigma$ and that it
describes a semigroup $S$. Apply \fullref{Algorithm}{alg:monoid}. If
$S$ is not a monoid, it cannot be a group. Otherwise we know that $S$
is a monoid and we have a word $i \in L(\Sigma)$ that represents its
identity. For each $a \in A(\Sigma)$, check whether $\elt{a} \gR
\elt{i}$ and $\elt{a} \gL \elt{i}$: if all these checks succeed,
then every generator is both right-and left-invertible, and so $S$ is
a group; if any fail, there is some generator that is either not
right- or not left-invertible and so $S$ cannot be a group. Hence
it is decidable whether $\Sigma$ describes a group.
\end{proof}

\begin{question}
Are Green's relations $\gD$ and $\gJ$ decidable for
word-hy\-per\-bol\-ic semigroups?

Note that $\gD$ and $\gJ$ are both undecidable for automatic
semigroups \cite[Theorems~4.1 \&~4.3]{otto_green}.
\end{question}

\section{Being completely simple}
\label{sec:compsimp}

This section shows that it is decidable whether a word-hy\-per\-bol\-ic
semigroup is completely simple. This is particularly useful because a
completely simple semigroup is word-hy\-per\-bol\-ic if and only if its
Cayley graph is a hyperbolic metric space
\cite[Theorem~4.1]{fountain_hyperbolic}, generalizing the equivalence
for groups of these properties for groups.

\begin{definition}
Let $S$ be a semigroup, $I$ and $\Lambda$ be index sets, and $P$ be a
$\Lambda \times I$ matrix over $S$ whose $(\lambda,i)$-th element is
$p_{\lambda,i}$. The Rees matrix semigroup
$\mathcal{M}[S;I,\Lambda;P]$ is defined to be the set $I \times S
\times \Lambda$ with multiplication
\[
(i,g,\lambda)(j,h,\mu) = (i,gp_{\lambda,j}h,\mu).
\]
\end{definition}

Recall that a semigroup is completely simple if it has no proper
two-sided ideals, is not the two-element null semigroup, and contains
a primitive idempotent (that is, an idempotent $e$ such that, for all
idempotents $f$, we have $ef = fe = f \implies e = f$). The version of the
celebrated Rees theorem due to Suschkewitsch
\cite[Theorem~3.3.1]{howie_fundamentals} shows that all completely simple
semigroups are isomorphic to a semigroup $\mathcal{M}[G;I,\Lambda;P]$,
where $G$ is a group and $I$ and $\Lambda$ are finite sets.

Let $A$ be an alphabet representing a generating set for a completely
simple semigroup $\mathcal{M}[G;I,\Lambda;P]$. Define maps $\upsilon : A
\to I$ and $\xi : A \to \Lambda$ by letting $a\upsilon$ and
$a\xi$ be such that $\elt{a} \in \set{a\upsilon} \times G \times
\set{a\xi}$. For the purposes of this paper, we call the pair of maps
$(\upsilon,\xi)$ the \defterm{species} of the completely simple
semigroup. We first of all prove that it is decidable whether a
word-hy\-per\-bol\-ic semigroup is a completely simple semigroup of a
particular species.

\begin{algorithm}
\label{alg:compsimpspeciesdec}
~

\textit{Input:} An interpreted word-hy\-per\-bol\-ic structure $\Sigma$, two
finite sets $I$ and $\Lambda$, and two surjective maps $\upsilon : A(\Sigma)
\to I$ and $\xi : A(\Sigma) \to \Lambda$.

\textit{Output:} If $\Sigma$ describes a completely simple semigroup
of species $(\upsilon,\xi)$, output \Yes; otherwise output
\No.

\textit{Method:} At various points in the algorithm, checks are
made. If any of these checks fail, the algorithm halts and outputs
\No.
\begin{enumerate}

\item For each $i \in I$ and $\lambda \in \Lambda$, construct the
  regular language
\[
L_{i,\lambda} = \gset{a_1\cdots a_n \in L}{a_i \in A, a_1\upsilon = i, a_n\xi = \lambda}.
\]
Check that each
$L_{i,\lambda}$ is non-empty.

\item For each $i,j \in I$ and $\lambda,\mu \in \Lambda$, construct
  the context-free language
\begin{equation}
\label{eq:compsimpalgone}
\gset{u\#_1v\#_2w^\rev \in M}{u \in L_{i,\lambda}, v \in L_{j,\mu}, w \in L - L_{i,\mu}},
\end{equation}
and check that it is empty.

\item For each $i \in I$ and $\lambda \in \Lambda$, choose a word
  $w_{i,\lambda} \in L_{i,\lambda}$ and construct the context-free language
\[
I_{i,\lambda} = \gset{u \in L_{i,\lambda}}{w_{i,\lambda}\#_1 u\#_2 w_{i,\lambda}^\rev \in M}.
\]
Check that each $I_{i,\lambda}$ is non-empty.

\item For each $i \in I$ and $\lambda \in \Lambda$, choose a word
  $u_{i,\lambda} \in I_{i,\lambda}$.

\item For each $a \in A$, $i \in I$, and $\lambda \in \Lambda$, check that
  $\elt{u_{a\upsilon,\lambda}}\,\elt{a} = \elt{a}$ and
  $\elt{a}\,\elt{u_{i,a\xi}} = \elt{a}$.

\item For each $a \in A$, $i \in I$, and $\lambda,\mu \in \Lambda$, calculate
  a word $h_{i,a,\mu,\lambda} \in L$ such that $\elt{h_{i,a,\mu,\lambda}} =
  \elt{u_{i,\mu}}\,\elt{a}\,\elt{u_{i,\lambda}}$.

\item For each $a \in A$, $i \in I$, and $\lambda,\mu \in \Lambda$, check that
  $\elt{h_{i,a,\mu,\lambda}}\,\elt{u_{i,a\xi}} = \elt{u_{i,\mu}}\,\elt{a}$.

\item For each $a \in A$, $i \in I$, and $\lambda,\mu \in \Lambda$, check that
\[
\elt{u_{i,\lambda}}\,\elt{h_{i,a,\mu,\lambda}} = \elt{h_{i,a,\mu,\lambda}}\,\elt{u_{i,\lambda}} = \elt{h_{i,a,\mu,\lambda}}.
\]

\item For each $a \in A$, $i \in I$, and $\lambda,\mu \in \Lambda$, construct the context-free language
\[
V_{i,a,\mu,\lambda} = \gset{v \in L}{h_{i,a,\mu,\lambda}\#_1v\#_2u_{i,\lambda} \in M}
\]
and check that it is non-empty.

\item For each $a \in A$, $i \in I$, and $\lambda,\mu \in \Lambda$, choose
  some $v_{i,a,\mu,\lambda} \in V_{i,a,\mu,\lambda}$ and check that
  $\elt{a}\,\elt{h_{i,a,\mu,\lambda}} = \elt{u_{i,\lambda}}$.

\item Halt and output \Yes.

\end{enumerate}
\end{algorithm}

\fullref{Lemmata}{lem:compsimpalg1} and \ref{lem:compsimpalg2} show
that this algorithm works.

\begin{lemma}
\label{lem:compsimpalg1}
If \fullref{Algorithm}{alg:compsimpspeciesdec} outputs \emph{Yes}, the
semigroup defined by the word-hy\-per\-bol\-ic structure $\Sigma$ is a
completely simple semigroup of species $(\upsilon,\xi)$.
\end{lemma}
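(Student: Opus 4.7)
The plan is to show that the semigroup $S$ described by $\Sigma$ decomposes into a family of group $\mathcal{H}$-classes indexed by $I \times \Lambda$, with generator $a$ landing in the cell indexed by $(a\upsilon, a\xi)$, and then to invoke the Suschkewitsch--Rees theorem. Define $S_{i,\lambda} := \elt{L_{i,\lambda}}$. Because every word in $L$ has a uniquely determined first and last letter, $L = \bigsqcup_{i,\lambda} L_{i,\lambda}$, so $S = \bigcup_{i,\lambda} S_{i,\lambda}$; step~1 ensures each $S_{i,\lambda}$ is non-empty, and the vanishing check of step~2 translates directly into the positional inclusion $S_{i,\lambda} \cdot S_{j,\mu} \subseteq S_{i,\mu}$, which is the multiplication rule of a Rees matrix semigroup.

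Next, I would use step~5 to show that $\elt{u_{i,\lambda}}$ is an idempotent acting as a two-sided identity on $S_{i,\lambda}$. For any $x = a_1 \cdots a_n \in L_{i,\lambda}$ we have $a_1\upsilon = i$ and $a_n\xi = \lambda$, so step~5 supplies
\[
\elt{u_{i,\lambda}}\,\elt{x} = \elt{u_{i,\lambda}}\,\elt{a_1}\,\elt{a_2\cdots a_n} = \elt{a_1}\,\elt{a_2\cdots a_n} = \elt{x},
\]
and symmetrically $\elt{x}\,\elt{u_{i,\lambda}} = \elt{x}$; specialising to $x = u_{i,\lambda}$ yields $\elt{u_{i,\lambda}}^2 = \elt{u_{i,\lambda}}$.

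Third, I would use steps~6--10 to show that $(S_{i,\lambda}, \elt{u_{i,\lambda}})$ is a group. For each generator $a$ and each $\mu \in \Lambda$, the element $\elt{h_{i,a,\mu,\lambda}} = \elt{u_{i,\mu}}\,\elt{a}\,\elt{u_{i,\lambda}}$ lies in $S_{i,\lambda}$ by the inclusion of step~2, and step~9 supplies a right inverse $\elt{v_{i,a,\mu,\lambda}}$ of it with respect to $\elt{u_{i,\lambda}}$. Using the compatibility identities verified in steps~7, 8 and~10, an arbitrary element $\elt{a_1 \cdots a_n} \in S_{i,\lambda}$ can be rewritten as a telescoping product of these canonical conjugates, and an inverse assembled from the associated $v$'s; this bootstrap from invertibility of canonical conjugates of single generators to invertibility of arbitrary products in $S_{i,\lambda}$ is where I expect the main technical difficulty to lie.

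Finally, fix a reference pair $(i_0, \lambda_0)$, set $G = S_{i_0, \lambda_0}$, and define a $\Lambda \times I$ matrix $P$ over $G$ by $p_{\lambda,i} = \elt{u_{i_0,\lambda}}\,\elt{u_{i,\lambda_0}}$. Using the idempotents $\elt{u_{i,\lambda_0}}$ and $\elt{u_{i_0,\lambda}}$ as sandwich elements to transport each $S_{i,\lambda}$ onto $G$ yields a map $S \to \mathcal{M}[G;I,\Lambda;P]$; step~2 together with the preceding work makes it a well-defined bijective homomorphism. By the Suschkewitsch--Rees theorem, $S$ is completely simple, and the species is $(\upsilon,\xi)$ because $a \in L_{a\upsilon, a\xi}$ as a single-letter word forces $\elt{a}$ into the cell indexed by $(a\upsilon, a\xi)$ under the isomorphism.
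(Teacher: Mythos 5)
Your skeleton for the first two thirds tracks the paper's proof: setting $T_{i,\lambda}=\elt{L_{i,\lambda}}$, reading step~2 as $T_{i,\lambda}T_{j,\mu}\subseteq T_{i,\mu}$, and noting that step~5 already forces $\elt{u_{i,\lambda}}$ to act as a two-sided identity (hence idempotent) on every element of $T_{i,\lambda}$ are all correct (the last point is even slightly more direct than the paper, which routes the identity property through step~8). But the step you explicitly defer---rewriting an arbitrary $\elt{a_1}\cdots\elt{a_n}\in T_{i,\lambda}$ as a product of the elements $\elt{h_{i,a,\mu,\lambda}}$---is the heart of the proof, and you give no argument for it. It is exactly what the check of step~7 is designed for: starting from $e_{i,\lambda}\elt{a_1}\cdots\elt{a_n}e_{i,\lambda}$ one repeatedly replaces $e_{i,\mu}\elt{a_k}$ by $\elt{h_{i,a_k,\mu,\lambda}}e_{i,a_k\xi}$ (step~7's identity), and the final factor $e_{i,a_{n-1}\xi}\elt{a_n}e_{i,\lambda}$ is $\elt{h_{i,a_n,a_{n-1}\xi,\lambda}}$ by step~6's defining equation; only then do steps~8--10 yield two-sided invertibility of these generators and hence that each $T_{i,\lambda}$ is a group. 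As written, your claim that the cells are groups is unproved.

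Your endgame also diverges from the paper and, as stated, fails. The paper never builds a Rees matrix isomorphism: once $S$ is a union of groups it observes that the only idempotents are the $e_{i,\lambda}$, that the inclusion $T_{i,\lambda}T_{j,\mu}\subseteq T_{i,\mu}$ makes every idempotent primitive, and that $S$ has no zero, and then quotes \cite[Theorem~3.3.3]{howie_fundamentals}. Your explicit map $s\mapsto\bigl(i,\ \elt{u_{i_0,\lambda}}\,s\,\elt{u_{i,\lambda_0}},\ \lambda\bigr)$ with $p_{\lambda,i}=\elt{u_{i_0,\lambda}}\,\elt{u_{i,\lambda_0}}$ is not a homomorphism in general: take $S=\mathcal{M}[\mathbb{Z};\{1,2\},\{1,2\};P']$ (written additively) with $p'_{11}=p'_{12}=p'_{21}=0$, $p'_{22}=c\neq 0$, and $(i_0,\lambda_0)=(1,1)$; then your $P$ equals $P'$, your transport sends $(2,x,2)$ to $x+2c$, and for two elements $(2,g,2)$, $(2,h,2)$ the image of their product has middle entry $g+h+3c$ while the product of their images has middle entry $g+h+5c$. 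Sandwiching by the idempotents recovers $p_{\lambda,i}\,g\,p_{\lambda,i}$ rather than $g$; the classical transport extracts $g$ from a factorization $s=r_i\,g\,q_\lambda$ using group inverses of fixed transversal elements. Moreover your map needs every element of $S$ to lie in exactly one cell, and disjointness of the languages $L_{i,\lambda}$ does not automatically transfer to their images (it does follow, but only after the cells are known to be groups, since $s=e_{i,\lambda}s$ together with step~2 forces every $L$-representative of $s$ into $L_{i,\mu}$-type cells); you do not address this. Either repair the construction along these lines or finish, as the paper does, with the regular-plus-primitive-idempotents characterization, noting that the species claim comes from $\elt{a}\in T_{a\upsilon,a\xi}$.
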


\begin{proof}
Let $S$ be the semigroup defined by the input word-hy\-per\-bol\-ic
structure $\Sigma$. Suppose the algorithm output
\textit{Yes}. Then all the checks in steps 1--10 must succeed.

For each $i \in I$ and $\lambda \in \Lambda$, let $T_{i,\lambda} =
\elt{L_{i,\lambda}}$. By the definition of $L_{i,\lambda}$, for each
$a \in A$, the word $a$ lies in $L_{a\upsilon,a\lambda}$. By the check
in step~1, each $T_{i,\lambda}$ is non-empty.

By the check in step~2, for all $i,j \in I$ and $\lambda,\mu \in
\Lambda$, there do not exist $u \in L_{i,\lambda}$, $v \in
L_{j,\mu}$, $w \in L - L_{i,\mu}$ with $\elt{u}\,\elt{v} =
\elt{w}$. That is,
\begin{equation}
\label{eq:compsimpalg1}
T_{i,\lambda}T_{j,\mu} \subseteq T_{i,\mu} \text{ for all $i,j \in I$
  and $\lambda,\mu \in \Lambda$.}
\end{equation}
In particular, $T_{i,\lambda}T_{i,\lambda} \subseteq T_{i,\lambda}$
and so each $T_{i,\lambda}$ is a subsemigroup of $S$.

In each $T_{i,\lambda}$, there is some element that stabilizes some
other element $\elt{w_{i,\lambda}}$ on the right (that is, that
right-multiplies $\elt{w_{i,\lambda}}$ like an identity) by the check in
step~3. In step~4, $u_{i,\lambda}$ is chosen to be such an
element. Let $e_{i,\lambda} = \elt{u_{i,\lambda}}$.

By the check in step~5,
\begin{equation}
\label{eq:compsimpalg2}
e_{a\upsilon,\lambda}\elt{a} = \elt{a} \text{ and }\elt{a}e_{i,a\xi} = \elt{a} \text{ for all $i \in I$ and $\lambda \in
\Lambda$}.
\end{equation}
In step~6, $h_{i,a,\mu,\lambda}$ is calculated for all $i
\in I$, $\lambda,\mu \in \Lambda$, $a \in A$ so that
\begin{equation}
\label{eq:compsimpalg3}
\elt{h_{i,a,\mu,\lambda}} = e_{i,\mu}\elt{a}e_{i,\lambda}.
\end{equation}
By \eqref{eq:compsimpalg1}, $h_{i,a,\mu,\lambda} \in L_{i,\lambda}$.
By the check in step~7,
\begin{equation}
\label{eq:compsimpalg4}
\elt{h_{i,a,\mu,\lambda}}e_{i,a\xi} =
e_{i,\mu}\elt{a} \text{ for all $i \in I$, $\lambda,\mu \in \Lambda$, $a \in
A$.}
\end{equation}

Let $i \in I$ and $\lambda \in \Lambda$. Let $t \in
T_{i,\lambda}$. Then $t = \elt{a_1}\,\elt{a_2}\cdots\elt{a_n}$ for
some $a_k \in A$. Since $a_1a_2\cdots a_n\in L_{i,\lambda}$,
$a_1\upsilon = i$ and $a_n\xi = \lambda$. Then
\begin{flalign*}
&&& \elt{a_1}\,\elt{a_2}\,\elt{a_3}\cdots\elt{a_n} \\
&&={} & e_{i,\lambda}\elt{a_1}\,\elt{a_2}\,\elt{a_3}\cdots\elt{a_n}e_{i,\lambda} & \text{\kern-7em[by \eqref{eq:compsimpalg2}, since $a_1\upsilon = i$ and $a_n\xi = \lambda$]} \\
&&={} & \elt{h_{i,a_1,\lambda,\lambda}}e_{i,a_1\xi}\elt{a_2}\,\elt{a_3}\cdots\elt{a_n}e_{i,\lambda}  & \text{[by \eqref{eq:compsimpalg4}]} \\
&&={} & \elt{h_{i,a_1,\lambda,\lambda}}\,\elt{h_{i,a_2,a_1\xi,\lambda}}e_{i,a_2\xi}\elt{a_3}\cdots\elt{a_n}e_{i,\lambda}  & \text{[by \eqref{eq:compsimpalg4}]} \\
&&={} & \elt{h_{i,a_1,\lambda,\lambda}}\,\elt{h_{i,a_2,a_1\xi,\lambda}}\,\elt{h_{i,a_3,a_2\xi,\lambda}}e_{i,a_3\xi}\cdots\elt{a_n}e_{i,\lambda}  & \text{[by \eqref{eq:compsimpalg4}]} \\
&& &\qquad \vdots \\
&&={} & \elt{h_{i,a_1,\lambda,\lambda}}\,\elt{h_{i,a_2,a_1\xi,\lambda}}\,\elt{h_{i,a_3,a_2\xi,\lambda}}\cdots e_{i,a_{n-1}\xi}\elt{a_n}e_{i,\lambda} \\
&&&& \text{\kern-4em[by repeated use of \eqref{eq:compsimpalg4}]} \\
&&={} & \elt{h_{i,a_1,\lambda,\lambda}}\,\elt{h_{i,a_2,a_1\xi,\lambda}}\,\elt{h_{i,a_3,a_2\xi,\lambda}}\cdots \elt{h_{i,a_n,a_{n-1}\xi,\lambda}}  & \text{[by \eqref{eq:compsimpalg3}]}
\end{flalign*}
Therefore the subsemigroup $T_{i,\lambda}$ is generated by the set of
elements $H_{i,\lambda} = \{\elt{h_{i,a,\mu,\lambda}} : a \in A, \mu \in
\Lambda\}$.

By the check in step~8, for all $i \in I$, $\lambda \in \Lambda$, and
$h \in H_{i,\lambda}$, we have $he_{i,\lambda} = e_{i,\lambda}h = h$. Since
$H_{i,\lambda}$ generates $T_{i,\lambda}$, it follows that
$e_{i,\lambda}$ is an identity for $T_{i,\lambda}$. So each
$T_{i,\lambda}$ is a submonoid of $S$ with identity
$e_{i,\lambda}$. In particular, each $e_{i,\lambda}$ is idempotent.

Let $i \in I$ and $\lambda \in \Lambda$. By the check in step~9,
every element $h \in H_{i,\lambda}$ has a right inverse $h'$ in
$T_{i,\lambda}$. By the check in step~10, $h'h = e_{i,\lambda}$ and so
$h'$ is also a left-inverse for $h$ in $T_{i,\lambda}$. Thus every
generator in $H_{i,\lambda}$ is both right- and left-invertible. Hence
every element of $T_{i,\lambda}$ is both right- and left-invertible
and so $T_{i,\lambda}$ is a subgroup of $S$.

Since $S$ is the union of the various $T_{i,\lambda}$, the semigroup
$S$ is regular and the $e_{i,\lambda}$ are the only idempotents in
$S$. Thus by \eqref{eq:compsimpalg1}, distinct idempotents cannot be
related by the idempotent ordering. Hence all idempotents of $S$ are
primitive. Since $S$ does not contain a zero (since it is the union of
the $T_{i,\lambda}$ and \eqref{eq:compsimpalg1} holds), it is
completely simple by \cite[Theorem~3.3.3]{howie_fundamentals}.
\end{proof}

\begin{lemma}
\label{lem:compsimpalg2}
If semigroup defined by the word-hy\-per\-bol\-ic structure $\Sigma$ is
a completely simple semigroup of species $(\upsilon,\xi)$, then
\fullref{Algorithm}{alg:compsimpspeciesdec} outputs \emph{Yes}.
\end{lemma}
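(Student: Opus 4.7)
The plan is to assume that $S$ is completely simple of species $(\upsilon,\xi)$, identify $S$ with a Rees matrix semigroup $\mathcal{M}[G;I,\Lambda;P]$ via Suschkewitsch's theorem, and then verify in turn that each of the checks built into steps~1, 2, 3, 5, 7, 8, 9 and~10 of \fullref{Algorithm}{alg:compsimpspeciesdec} succeeds for a suitable set of choices of the words drawn in steps~4 and~10. Since no other step can halt the algorithm with output \No, the algorithm will then fall through to step~11 and output \Yes.

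The central technical fact on which everything depends is that, because Rees multiplication $(i,g,\lambda)(j,h,\mu) = (i,gp_{\lambda,j}h,\mu)$ preserves the first coordinate of the first factor and the last coordinate of the second, the element $\elt{c_1\cdots c_n}$ lies in the $\rel{H}$-class $H_{c_1\upsilon,\,c_n\xi}$ for every word $c_1\cdots c_n \in L$. Consequently $\elt{L_{i,\lambda}} \subseteq H_{i,\lambda}$ and, since $\elt{L} = S$, every element of $H_{i,\lambda}$ has a representative in $L_{i,\lambda}$. Because $S$ is completely simple, each $H_{i,\lambda}$ is itself a group; let $e_{i,\lambda}$ denote its identity.

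With this in hand the checks dispatch in layers. Step~1 follows from the surjectivity of $\upsilon$ and $\xi$, since any word $ab$ with $a\upsilon = i$ and $b\xi = \lambda$ lies in $L_{i,\lambda}$. Step~2 is the central fact itself: any $w \in L$ with $\elt{u}\elt{v} = \elt{w}$ must satisfy $\elt{w} \in H_{i,\mu}$, hence $w \in L_{i,\mu}$, so the language in~\eqref{eq:compsimpalgone} is empty. For step~3, any word in $L$ representing $e_{i,\lambda}$ lies in $L_{i,\lambda}$ and right-stabilises $\elt{w_{i,\lambda}}$; conversely, the only element of the group $H_{i,\lambda}$ that right-stabilises a member of $H_{i,\lambda}$ is $e_{i,\lambda}$, so $\elt{u_{i,\lambda}} = e_{i,\lambda}$ whichever word is picked in step~4. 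Step~5 is then the direct Rees calculation $e_{a\upsilon,\lambda}\elt{a} = \elt{a}$ and $\elt{a}e_{i,a\xi} = \elt{a}$, using the explicit form $e_{i,\lambda} = (i,p_{\lambda,i}^{-1},\lambda)$.

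Steps~7 and~8 are short computations inside $H_{i,\lambda}$: step~7 expands $\elt{h_{i,a,\mu,\lambda}}e_{i,a\xi}$ using $\elt{h_{i,a,\mu,\lambda}} = e_{i,\mu}\elt{a}e_{i,\lambda}$ and reduces it to $e_{i,\mu}\elt{a}$ via the Rees formula, while step~8 is immediate because $\elt{h_{i,a,\mu,\lambda}} \in H_{i,\lambda}$ and $e_{i,\lambda}$ is the identity of that group. For step~9, the group inverse of $\elt{h_{i,a,\mu,\lambda}}$ in $H_{i,\lambda}$ has a representative in $L$, which therefore lies in $V_{i,a,\mu,\lambda}$. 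The main obstacle to flag lies in step~10: the language $V_{i,a,\mu,\lambda}$ also contains words whose images are right-inverses of $\elt{h_{i,a,\mu,\lambda}}$ lying in other rows $H_{j,\lambda}$ with $j \neq i$, for which the check would fail. The argument therefore turns on showing that one may specifically \emph{choose} $v_{i,a,\mu,\lambda}$ to represent the inverse in $H_{i,\lambda}$, which is automatically a two-sided inverse there; with this choice $\elt{v_{i,a,\mu,\lambda}}\elt{h_{i,a,\mu,\lambda}} = e_{i,\lambda} = \elt{u_{i,\lambda}}$, completing the verification.
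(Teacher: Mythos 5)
Your overall route is the same as the paper's: identify $S$ with a Rees matrix semigroup $\mathcal{M}[G;I,\Lambda;P]$, note that a word $a_1\cdots a_n \in L$ represents an element of $T_{i,\lambda} = \{i\}\times G\times\{\lambda\}$ precisely when $a_1\upsilon = i$ and $a_n\xi = \lambda$, deduce that any word picked in step~4 satisfies $\elt{u_{i,\lambda}} = e_{i,\lambda} = (i,p_{\lambda,i}^{-1},\lambda)$, and then verify the checks of steps~1--3, 5 and 7--10 one by one; for steps 1--9 your verifications coincide with the paper's.

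The genuine divergence is your treatment of step~10, and the obstacle you flag there is real and is exactly the point the paper's proof glosses over. Since $V_{i,a,\mu,\lambda}$ is cut out of all of $L$ rather than of $L_{i,\lambda}$, it contains, for \emph{every} $j \in I$, a representative of the right inverse of $\elt{h_{i,a,\mu,\lambda}} = (i,g,\lambda)$ relative to $e_{i,\lambda}$ that lies in row $j$, namely $(j,\,p_{\lambda,j}^{-1}g^{-1}p_{\lambda,i}^{-1},\,\lambda)$; if $|I|\geq 2$ and the algorithm happens to pick such a word with $j \neq i$, then $\elt{v_{i,a,\mu,\lambda}}\,\elt{h_{i,a,\mu,\lambda}}$ has first coordinate $j$ and the step-10 check (read, as in the proof of \fullref{Lemma}{lem:compsimpalg1}, as $\elt{v_{i,a,\mu,\lambda}}\,\elt{h_{i,a,\mu,\lambda}} = \elt{u_{i,\lambda}}$) fails. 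The paper simply says ``a right inverse is also a left inverse in a group'', tacitly assuming the chosen word represents an element of $T_{i,\lambda}$, so you have spotted something the published argument misses. However, your remedy --- that one may \emph{specifically choose} $v_{i,a,\mu,\lambda}$ to represent the inverse inside $T_{i,\lambda}$ --- does not by itself prove the lemma as stated, because step~10 of \fullref{Algorithm}{alg:compsimpspeciesdec} allows an arbitrary choice from $V_{i,a,\mu,\lambda}$ and the algorithm has no criterion for privileging the good representative. The clean repair is to restrict the definition of $V_{i,a,\mu,\lambda}$ to $v \in L_{i,\lambda}$ (an intersection with a regular language, so it stays context-free and its emptiness remains decidable, just as with $I_{i,\lambda}$ in step~3); then every member of $V_{i,a,\mu,\lambda}$ represents the group inverse of $\elt{h_{i,a,\mu,\lambda}}$ in $T_{i,\lambda}$, any choice passes step~10, and this restriction is in any case what the proof of \fullref{Lemma}{lem:compsimpalg1} implicitly uses when it asserts that the right inverse produced by step~9 lies in $T_{i,\lambda}$. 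With that amendment your argument is complete; without it, neither your proof nor the paper's quite closes step~10.
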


\begin{proof}
Suppose the semigroup $S$ defined by the word-hy\-per\-bol\-ic structure
$\Sigma$ is a completely simple semigroup, with $S =
\mathcal{M}[G;I,\Lambda;P]$. For all $i \in I$ and $\lambda \in
\Lambda$, let $e_{i,\lambda}$ be the identity of the subgroup
$T_{i,\lambda} = \set{i} \times G \times \set{\lambda}$; that is,
  $e_{i,\lambda} = (i,p_{\lambda,i}^{-1},\lambda)$. For each $a \in A$,
  the element $\elt{a}$ has the form $(a\upsilon,g_a,a\xi)$ for some
  $g_a \in G$.

By the definition of multiplication in $S$, the word $a_1\cdots a_n
\in L$ represents an element of $T_{i,\lambda}$ if and only if
$a_1\upsilon = i$ and $a_n\xi = \lambda$. Hence each $L_{i,\lambda}$
must be the preimage of $T_{i,\lambda}$ and map surjectively onto
$T_{i,\lambda}$. In particular, $L_{i,\lambda}$ must be non-empty and
so the checks in step~1 succeed.

For any $i,j \in I$ and $\lambda,\mu \in \Lambda$, we have
$T_{i,\lambda}T_{j,\mu} \subseteq T_{i,\mu}$. Hence if $u \in
L_{i,\lambda}$, $v \in L_{j,\mu}$, and $w \in L$ are such that
$\elt{u}\,\elt{v} = \elt{w}$, then $w \in L_{i,\mu}$. Thus the
language \eqref{eq:compsimpalgone} is empty for all $i,j \in I$ and
$\lambda,\mu \in \Lambda$. Hence all the checks in step~2 succeed.

For any $i \in I$ and $\lambda \in \Lambda$, if $w_{i,\lambda} \in
L_{i,\lambda}$, then $\elt{w_{i,\lambda}} \in T_{i,\lambda}$. Since
$T_{i,\lambda}$ is a subgroup, $\elt{w_{i,\lambda}}e_{i,\lambda} =
\elt{w_{i,\lambda}}$, and $e_{i,\lambda}$ is the unique element of
$T_{i,\lambda}$ that stabilizes $\elt{w_{i,\lambda}}$ on the
right. Thus the language $I_{i,\lambda}$ is non-empty, and consists of
words representing $e_{i,\lambda}$. Hence the checks in step~3
succeed, and the words $u_{i,\lambda}$ chosen in step~4 are such that
$\elt{u_{i,\lambda}} = e_{i,\lambda}$.

In a completely simple semigroup, each idempotent is a left identity
within its own $\gR$-class and $e_{i,a\xi}$ is a right identity
within its own $\gL$-class
\cite[Proposition~2.3.3]{howie_fundamentals}. Hence for each $a \in
A$, $i\in I$, and $\lambda\in\Lambda$, we have
$e_{a\upsilon,\lambda}\elt{a} = \elt{a}$ and $\elt{a}e_{i,a\xi} =
\elt{a}$. Thus the checks in step~5 succeed.

For all $a \in A$, $i \in I$, and $\lambda,\mu \in \Lambda$,
\begin{align*}
& \elt{h_{i,a,\mu,\lambda}}\,\elt{u_{i,a\xi}} \\
={} & e_{i,\mu}\elt{a}e_{i,\lambda}e_{i,a\xi} \\
={} & e_{i,\mu}(a\upsilon,g_a,a\xi)(i,p_{\lambda,i}^{-1},\lambda)(i,p_{a\xi,i}^{-1},a\xi) \\
={} & e_{i,\mu}(a\upsilon,g_ap_{a\xi,i}p_{\lambda,i}^{-1}p_{\lambda,i}p_{a\xi,i}^{-1},a\xi) \\
={} & e_{i,\mu}(a\upsilon,g_a,a\xi) \\
={} & e_{i,\mu}\elt{a}.
\end{align*}
Thus all the checks in step~7 succeed.

For all $a \in A$, $i \in I$, and $\lambda,\mu \in \Lambda$, the element
$\elt{h_{i,a,\mu,\lambda}}$ lies in the subgroup $T_{i,\lambda}$,
whose identity is $e_{i,\lambda}$. Hence all the checks in step~8
succeed. Since all elements of this subgroup are right-invertible,
each language $V_{i,a,\mu,\lambda}$ is non-empty; hence all the checks
in step~9 succeed. Finally, since a right inverse is also a left
inverse in a group, all the checks in step~10 succeed. Therefore the
algorithm reaches step~10 and halts with output \emph{Yes}.
\end{proof}

\begin{theorem}
\label{thm:compsimpdec}
There is an algorithm that takes as input an interpreted
word-hy\-per\-bol\-ic structure $\Sigma$ for a semigroup and decides whether
it is a completely simple semigroup.
\end{theorem}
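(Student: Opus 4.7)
The plan is to reduce the problem to \fullref{Algorithm}{alg:compsimpspeciesdec}, which already decides whether $\Sigma$ describes a completely simple semigroup of a prescribed species $(\upsilon,\xi)$. Once a species is fixed, everything is handled; the only remaining task is to enumerate the species that could conceivably occur, and the main point to establish is that only finitely many species need to be tried.

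To this end, I would first observe that if $S = \mathcal{M}[G;I,\Lambda;P]$ is a completely simple semigroup generated by $\elt{A(\Sigma)}$, then both species maps are forced to be surjective. Indeed, the first coordinate of any product $(i_1,g_1,\lambda_1)\cdots(i_n,g_n,\lambda_n)$ equals $i_1$, so if some $i \in I$ were missing from the image of $\upsilon$, no product of generators (and hence no element of $S$) could have first coordinate $i$, contradicting the fact that $\elt{A(\Sigma)}$ generates $S$. The symmetric argument forces $\xi$ to be surjective. Consequently $|I|,|\Lambda| \leq |A(\Sigma)|$, and up to renaming of the elements of $I$ and $\Lambda$ there are only finitely many candidate pairs $(\upsilon,\xi)$.

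The algorithm is then to enumerate all tuples $(I,\Lambda,\upsilon,\xi)$ consisting of finite sets $I,\Lambda$ of cardinality at most $|A(\Sigma)|$ together with surjections $\upsilon : A(\Sigma) \to I$ and $\xi : A(\Sigma) \to \Lambda$, invoke \fullref{Algorithm}{alg:compsimpspeciesdec} on each, output \Yes\ if any invocation returns \Yes, and output \No\ otherwise. Correctness is immediate from \fullref{Lemmata}{lem:compsimpalg1} and \ref{lem:compsimpalg2} together with the Rees--Suschkewitsch theorem guaranteeing the $\mathcal{M}[G;I,\Lambda;P]$ form: if $S$ is completely simple then it has some species of the above kind and the corresponding invocation will succeed, and conversely any successful invocation certifies complete simplicity. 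I do not anticipate a serious obstacle here, since the heavy algebraic and language-theoretic work has been packaged into \fullref{Algorithm}{alg:compsimpspeciesdec}; the only new content is the surjectivity observation that bounds the search space.
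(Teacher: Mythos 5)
Your proposal is correct and follows essentially the same route as the paper: both reduce to \fullref{Algorithm}{alg:compsimpspeciesdec}, observe that in a Rees matrix semigroup the leftmost (resp.\ rightmost) generator of a product determines its $I$- (resp.\ $\Lambda$-) component so that the species maps must be surjective and $|I|,|\Lambda| \leq |A(\Sigma)|$, and then exhaustively test the finitely many candidate species. Your surjectivity observation is exactly the bounding argument the paper uses, so there is nothing to add.
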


\begin{proof}
We prove that this problem can be reduced to the problem of deciding
whether the semigroup defined by an interpreted word-hy\-per\-bol\-ic
structure $\Sigma$ is a completely simple semigroup of a particular
species $(\upsilon : A(\Sigma) \to I, \xi : A(\Sigma) \to \Lambda)$.

Let $S$ be the semigroup specified by $\Sigma$. Then $S$ is finitely
generated. Thus we need only consider the problem of deciding whether
$S$ is a finitely generated completely simple semigroup. By the
definition of multiplication in a completely simple semigroup (viewed
as a Rees matrix semigroup), the leftmost generator in a product
determines its $\gR$-class (that is, the $I$-component of the
product) and the rightmost generator in a product determines its
$\gL$-class (that is, the $\Lambda$-component of the product). Thus
there must be at least one generator in each $\gR$- and $\gL$-
class, and hence if $S$ is an $I \times \Lambda$ Rees matrix
semigroup, both $|I|$ and $|\Lambda|$ cannot exceed
$|A(\Sigma)|$.

Thus it is suffices to decide whether $S$ is an $I \times \Lambda$
completely simple semigroup for some fixed choice of $I$ and
$\Lambda$, for one can simply test the finitely many possibilities for
index sets $I$ and $\Lambda$ no larger than $A(\Sigma)$.

One can restrict further, and ask whether $S$ is completely semigroup
of some particular species $(\upsilon : A(\Sigma) \to I, \xi :
A(\Sigma) \to \Lambda)$, for there are a bounded number of
possibilities for the maps surjective $\upsilon$ and $\xi$, so it
suffices to test each one.
\end{proof}

\section{Being a Clifford semigroup}
\label{sec:clifford}

This section is dedicated to showing that being a Clifford semigroup
is decidable for word-hy\-per\-bol\-ic semigroups. Recall the definition of a
Clifford semigroup:

\begin{definition}
Let $Y$ be a [meet] semilattice and let $\gset{G_\alpha}{\alpha \in Y}$
be a collection of disjoint groups with, for all $\alpha, \beta \in Y$
such that $\alpha \geq \beta$, a homomorphism $\phi_{\alpha,\beta} :
G_\alpha \to G_\beta$ satisfying the following conditions:
\begin{enumerate}
\item For each $\alpha \in Y$, the homomorphism $\phi_{\alpha,\alpha}$
  is the identity map.
\item For $\alpha,\beta,\gamma \in Y$ with $\alpha \geq \beta \geq
  \gamma$,
\begin{equation}
\label{eq:cliffmap}
\phi_{\alpha,\gamma} = \phi_{\alpha,\beta}\phi_{\beta,\gamma}.
\end{equation}
\end{enumerate}
The set of elements of the \defterm{Clifford semigroup}
$\cliff[Y;G_\alpha;\phi_{\alpha,\beta}]$ is the union of the disjoint
groups $G_\alpha$. The product of the elements
$s$ and $t$ of $S$, where $s \in G_\alpha$ and $t \in G_\beta$, is
\begin{equation}
\label{eq:cliffmult}
(s\phi_{\alpha,\alpha\wedge\beta})(t\phi_{\beta,\alpha\wedge\beta}),
\end{equation}
which lies in the group $G_{\alpha\wedge\beta}$. [The meet of $\alpha$
  and $\beta$ is denoted $\alpha \wedge \beta$.]
\end{definition}

Notice that if $\cliff[Y;G_\alpha;\phi_{\alpha,\beta}]$ is finitely generated, the semilattice $Y$ must be finitely
generated and thus finite.

Let $A$ be an alphabet representing a generating set for a Clifford
semigroup $\cliff[Y;G_\alpha;\phi_{\alpha,\beta}]$. Define a map $\xi
: A \to Y$ by letting $a\xi$ be such that $\elt{a} \in G_{a\xi}$. For
the purposes of this paper, we call this map $\xi : A \to Y$ the
\defterm{species} of the Clifford semigroup. [Notice that the map
  $\xi$ extends to a unique homomorphism $\xi : A^+ \to Y$.] We first
of all prove that it is decidable whether a word-hy\-per\-bol\-ic semigroup is
a Clifford semigroup of a particular species.

\begin{algorithm}
\label{alg:cliffspeciesdec}
~

\textit{Input:} An interpreted word-hy\-per\-bol\-ic structure $\Sigma$ and a map $\xi : A \to Y$.

\textit{Output:} If $\Sigma$ describes a Clifford semigroup of species
$\xi : A \to Y$, output \Yes; otherwise output \No.

\textit{Method:} At various points in the algorithm, checks are
made. If any of these checks fail, the algorithm halts and outputs
\No.
\begin{enumerate}

\item For each $\alpha \in Y$, construct the regular language
\[
L_\alpha = \gset{w \in L}{w\xi = \alpha}.
\]
(These languages are regular since $L$ is regular, $Y$ is finite, and
the map $\xi : A \to Y$ is known.) Check that each $L_\alpha$ is
non-empty.

\item For each $\alpha,\beta \in Y$, construct the context-free
  language
\begin{equation}
\label{eq:cliffordalg}
\gset{u\#_1v\#_2w^\rev \in M}{u \in L_\alpha,v \in L_\beta, w \in L - L_{\alpha \land \beta}}
\end{equation}
and check that it is empty.

\item For each $\alpha\in Y$, choose some word $w_\alpha \in L_\alpha$
  and construct the context-free language
\[
I_\alpha = \gset{i \in L_\alpha}{w_\alpha\#_1i\#_2w_\alpha^\rev \in M}
\]
and check that $I_\alpha$ is non-empty.

\item For each $\alpha \in Y$, pick some $i_\alpha \in I_\alpha$ and
  check that for all $\alpha,\beta \in Y$,
  $\elt{i_\alpha}\,\elt{i_\beta} = \elt{i_{\alpha\land\beta}}$.

\item For each $a \in A$, check that $\elt{i_{a\xi}}\,\elt{a} =
  \elt{a}\,\elt{i_{a\xi}} = \elt{a}$. For each $\alpha \in Y$ and
  $a \in A$ check that $\elt{a}\,\elt{i_\alpha} =
  \elt{i_\alpha}\,\elt{a}$.

\item For each $\alpha \in Y$ and $a \in A$ such that $a\xi \geq
  \alpha$, construct the context-free language
\[
V_{\alpha,a} = \gset{v \in L_\alpha}{a\#_1 v\#_2 i_\alpha \in M }
\]
and check that $V_{\alpha,a}$ is non-empty.

\item For each $\alpha \in Y$ and $a \in A$ such that $a\xi \geq
  \alpha$, pick some $v_{\alpha,a} \in V_{\alpha,a}$ and check that
  $\elt{v_{\alpha,a}}\,\elt{a} = \elt{i_\alpha}$.

\item Halt and output \Yes.

\end{enumerate}
\end{algorithm}

\fullref{Lemmata}{lem:cliffordalg1} and \ref{lem:cliffordalg2} show
that this algorithm works.

\begin{lemma}
\label{lem:cliffordalg1}
If \fullref{Algorithm}{alg:cliffspeciesdec} outputs \Yes, the
semigroup described by the word-hy\-per\-bol\-ic structure $\Sigma$ is a
Clifford semigroup of species $\xi : A \to Y$.
\end{lemma}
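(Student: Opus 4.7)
My plan is to extract the Clifford ingredients $(Y;T_\alpha;\phi_{\alpha,\beta})$ directly from the algorithm's certifications. Set $T_\alpha = \elt{L_\alpha}$ and $e_\alpha = \elt{i_\alpha}$. Since $L = \bigcup_{\alpha \in Y} L_\alpha$ and $L\phi = S$, we get $S = \bigcup_\alpha T_\alpha$ for free, and the check in step~2 certifies
\[
T_\alpha T_\beta \subseteq T_{\alpha \land \beta} \qquad \text{for all } \alpha,\beta \in Y,
\]
so in particular each $T_\alpha$ is a subsemigroup of $S$.

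First I would show that $e_\alpha$ is a two-sided identity of $T_\alpha$. Iterating step~4 gives $e_{a_1\xi}\cdots e_{a_n\xi} = e_\alpha$ for every $w = a_1\cdots a_n \in L_\alpha$, while step~5 furnishes two crucial facts: every $e_\beta$ commutes with every generator, and $e_{a\xi}$ fixes $\elt{a}$ on both sides. Sliding each factor $e_{a_j\xi}$ next to $\elt{a_j}$ and absorbing it then gives $\elt{w}\,e_\alpha = \elt{w} = e_\alpha\,\elt{w}$; since the generators of $S$ commute with all idempotents, the $e_\alpha$ are in fact central in $S$.

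Next I would supply inverses. For each $a \in A$ with $a\xi \geq \alpha$, step~7 provides $v_{\alpha,a} \in L_\alpha$ with $\elt{a}\,\elt{v_{\alpha,a}} = \elt{v_{\alpha,a}}\,\elt{a} = e_\alpha$. For a general $w = a_1\cdots a_n \in L_\alpha$ every $a_j\xi \geq \alpha$, so, using centrality of $e_\alpha$ once more, the product $\elt{v_{\alpha,a_n}}\cdots\elt{v_{\alpha,a_1}} \in T_\alpha$ is a two-sided inverse of $\elt{w}$. Hence each $T_\alpha$ is a group with identity $e_\alpha$.

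Finally, for $\alpha \geq \beta$ I would set $\phi_{\alpha,\beta}\colon T_\alpha \to T_\beta$, $t \mapsto t\,e_\beta$; step~2 places it in $T_{\alpha\land\beta} = T_\beta$, centrality of $e_\beta$ makes it a homomorphism, $\phi_{\alpha,\alpha}$ is trivial, and $e_\beta e_\gamma = e_\gamma$ for $\gamma \leq \beta$ gives~\eqref{eq:cliffmap}. The product rule~\eqref{eq:cliffmult} reduces to $st\,e_{\alpha\land\beta} = st$, which holds because $st \in T_{\alpha\land\beta}$ is fixed by its own identity. Since $\elt{a} \in T_{a\xi}$, the species of the resulting Clifford structure is $\xi$ as required. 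The step I expect to be most delicate is the disjointness of the $T_\alpha$'s demanded by the definition of a Clifford semigroup: if $s \in T_\alpha \cap T_\beta$ then $s$ lies in two maximal subgroups of $S$, forcing $e_\alpha = e_\beta$, after which step~4 together with the semilattice structure of $Y$ should force $\alpha = \beta$.
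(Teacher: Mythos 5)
Up to the point where each $T_\alpha = \elt{L_\alpha}$ is shown to be a group with identity $e_\alpha = \elt{i_\alpha}$ and the $e_\alpha$ are central, your argument is essentially the paper's: the same subsemigroup property from step~2, the same identity computation from steps~4 and~5, and the same inverse trick using the words $v_{\alpha,a}$ certified in steps~6--7. You diverge in the endgame: the paper observes that $S$ is a union of groups, hence regular, with all idempotents among the central elements $e_\alpha$, and then invokes \cite[Theorem~4.2.1]{howie_fundamentals}, whereas you build the strong-semilattice structure explicitly via $\phi_{\alpha,\beta}(t) = te_\beta$. Your route has the merit of confronting the ``species $\xi$'' clause directly (the abstract characterisation leaves the indexing by $Y$ implicit), but it obliges you to prove that the $T_\alpha$ are pairwise disjoint, equivalently that $\alpha \mapsto e_\alpha$ is injective, and that is exactly where your proposal has a gap.

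You correctly reduce overlap to $e_\alpha = e_\beta$ (no appeal to maximal subgroups is needed: if $s \in T_\alpha \cap T_\beta$ with inverse $s'$ in $T_\alpha$, then $e_\alpha = s's = s'(se_\beta) = e_\alpha e_\beta$, symmetrically $e_\beta = e_\beta e_\alpha$, and these agree), but the final assertion that ``step~4 together with the semilattice structure of $Y$ should force $\alpha = \beta$'' does not hold: step~4 only certifies that $\alpha \mapsto e_\alpha$ is a semilattice homomorphism onto $E$, and a surjective homomorphism may identify distinct elements, so nothing in step~4 excludes $e_\alpha = e_\beta$ with $\alpha \neq \beta$. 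The missing ingredient is the full strength of step~2, of which you used only the inclusion $T_\alpha T_\beta \subseteq T_{\alpha\wedge\beta}$. Emptiness of the language \eqref{eq:cliffordalg} says that \emph{every} word of $L$ representing a product $\elt{u}\,\elt{v}$ with $u \in L_\alpha$, $v \in L_\beta$ lies in $L_{\alpha\wedge\beta}$; since $\elt{L} = S$ and the languages $L_\gamma$ partition $L$, this gives $T_\alpha T_\beta \cap T_\gamma = \emptyset$ for all $\gamma \neq \alpha \wedge \beta$. Taking $\beta = \alpha$ and using $T_\alpha = e_\alpha T_\alpha \subseteq T_\alpha T_\alpha$ (legitimate once $e_\alpha$ is known to be the identity of $T_\alpha$) yields $T_\alpha \cap T_\gamma = \emptyset$ for $\gamma \neq \alpha$, which is the disjointness you need and in particular the injectivity of $\alpha \mapsto e_\alpha$. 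With that repair, the remainder of your construction --- the maps $t \mapsto te_\beta$, conditions \eqref{eq:cliffmap} and \eqref{eq:cliffmult}, and the species claim $\elt{a} \in T_{a\xi}$ --- goes through.
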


\begin{proof}
Let $S$ be the semigroup defined by the input word-hy\-per\-bol\-ic
structure $\Sigma$. Suppose the algorithm output
\textit{Yes}. Then all the checks in steps 1--7 must succeed.

For each $\alpha \in Y$, let $T_\alpha = \elt{L_\alpha}$. By the check
in step~1, all $T_\alpha$ are non-empty.

By the check in step~2, for every $\alpha,\beta \in Y$, there do not
exist $u \in L_\alpha$, $v \in L_\beta$, $w \in
L-L_{\alpha\land\beta}$ with $\elt{u}\,\elt{v} = \elt{w}$. That is,
$T_\alpha T_\beta \subseteq T_{\alpha\land\beta}$. In particular,
$T_\alpha T_\alpha \subseteq T_\alpha$ and so each $T_\alpha$ is a
subsemigroup of $S$.

In each $T_\alpha$, there is some element that right-multiplies some
other element like an identity by the check in step~3.

For each $\alpha \in Y$, the word $i_\alpha$ represents an element
$e_\alpha$, and the set of elements $E = \gset{e_\alpha}{\alpha \in Y}$
forms a subsemigroup isomorphic to the semilattice $Y$ by the check in
step~4.

By the checks in step~5, for each $a \in A$, the element $e_{a\xi}$
(which, like $\elt{a}$, lies in $T_{a\xi})$) acts like an identity on
$\elt{a}$ (that is, $e_{a\xi}\elt{a} = \elt{a}e_{a\xi} = \elt{a}$), and
every element $e_\alpha$ commutes with $\elt{a}$.

Let $\alpha \in Y$ and $t \in T_\alpha$. Then $t = \elt{a_1}\,\elt{a_2}\cdots\elt{a_n}$
for some $a_i \in A$ with $(a_1a_2\cdots a_n)\xi = \alpha$. Then
\begin{flalign*}
&&& \elt{a_1}\,\elt{a_2}\cdots\elt{a_n} \\
&&={} & e_{a_1\xi}\elt{a_1}e_{a_2\xi}\elt{a_2}\cdots e_{a_n\xi}\elt{a_n} &\text{[by the check in step~6]} \\
&&={} & e_{a_1\xi}e_{a_2\xi}\cdots e_{a_n\xi}\elt{a_1}\,\elt{a_2}\cdots\elt{a_n} &\text{[by the check in step~6]} \\
&&={} & e_{(a_1\xi)\land(a_2\xi)\land \cdots \land (a_n\xi)}\elt{a_1}\,\elt{a_2}\cdots\elt{a_n} &\text{[by the isomorphism of $E$ and $Y$]}\\
&&={} & e_{(a_1a_2\cdots a_n)\xi}\alpha\elt{a_1}\,\elt{a_2}\cdots\elt{a_n} &\text{[by the extension of $\xi$ to $A^+$]} \\
&&={} & e_\alpha\elt{a_1}\,\elt{a_2}\cdots\elt{a_n}.
\end{flalign*}
Thus $t = e_\alpha t$. Similarly $te_\alpha = t$. Hence $e_\alpha$ is
an identity for $T_\alpha$.

For each $\alpha \in Y$ and $a \in A$ with $a\xi \geq \alpha$, there
is an element $\elt{v_{\alpha,a}} \in T_\alpha$ such that
$\elt{v_{\alpha,a}}\,\elt{a} = \elt{a}\,\elt{v_{\alpha,a}} = e_\alpha$
by the checks in steps~6 and~7. Since $T_\alpha$ is generated by
elements $\elt{a}$ such that $a\xi \geq \alpha$, it follows that
$T_\alpha$ is a subgroup of $S$.

Since $L$ is the union of the various $L_\alpha$, the semigroup $S$ is
the union of the various subgroups $T_\alpha$. In particular, $S$ is
regular. Furthermore, the only idempotents in $S$ are the identities
of these subgroups; that is, the elements $e_\alpha$. Since every
$e_\alpha$ commutes with every element of $\elt{A}$, it follows that
all idempotents of $S$ are central. Hence $S$ is a regular semigroup
in which the idempotents are central, and thus is a Clifford
semigroup by \cite[Theorem~4.2.1]{howie_fundamentals}.
\end{proof}

\begin{lemma}
\label{lem:cliffordalg2}
If the semigroup defined by the word-hy\-per\-bol\-ic structure $\Sigma$
is a Clifford semigroup of species $\xi : A \to Y$, then
\fullref{Algorithm}{alg:cliffspeciesdec} outputs \emph{Yes}.
\end{lemma}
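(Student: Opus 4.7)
The plan is to verify, one by one, that each of the seven checks in \fullref{Algorithm}{alg:cliffspeciesdec} is satisfied whenever $S = \cliff[Y;G_\alpha;\phi_{\alpha,\beta}]$ and the input map $\xi$ assigns to each $a \in A$ the index of the group component containing $\elt{a}$. The key structural facts I shall invoke are: $G_\alpha G_\beta \subseteq G_{\alpha \wedge \beta}$ with product given by \eqref{eq:cliffmult}; the identities $e_\alpha$ of the $G_\alpha$ form a subsemilattice of $S$ isomorphic to $Y$; every idempotent of $S$ is central; and the preimage $L_\alpha$ maps exactly onto $G_\alpha$ under the interpretation, because the decomposition $L = \bigcup_\alpha L_\alpha$ corresponds via $\xi$ to the partition $S = \bigcup_\alpha G_\alpha$.

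Steps 1--4 are routine consequences of this setup. Since each $G_\alpha$ is nonempty, so is each $L_\alpha$, giving step 1. In step 2, if $u \in L_\alpha$, $v \in L_\beta$, and $\elt{u}\,\elt{v} = \elt{w}$, then $\elt{w} \in G_\alpha G_\beta \subseteq G_{\alpha \wedge \beta}$, so $w \in L_{\alpha \wedge \beta}$ and the language \eqref{eq:cliffordalg} is empty. For step 3, a word representing $e_\alpha$ in $L_\alpha$ trivially lies in $I_\alpha$; conversely, since $G_\alpha$ is a group, the unique element of $G_\alpha$ that fixes $\elt{w_\alpha}$ on the right is $e_\alpha$, so $\elt{i_\alpha} = e_\alpha$ for any $i_\alpha$ picked in step 4. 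Step 4 then reduces to $e_\alpha e_\beta = e_{\alpha \wedge \beta}$, which is immediate from the Clifford structure.

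For step 5, the equalities $\elt{i_{a\xi}}\,\elt{a} = \elt{a}\,\elt{i_{a\xi}} = \elt{a}$ state only that $e_{a\xi}$ is the identity of the group containing $\elt{a}$, while $\elt{a}\,\elt{i_\alpha} = \elt{i_\alpha}\,\elt{a}$ holds because idempotents in a Clifford semigroup are central. For steps 6--7 with $a\xi \geq \alpha$, \eqref{eq:cliffmult} gives $\elt{a}e_\alpha = \elt{a}\phi_{a\xi,\alpha} \in G_\alpha$; since $G_\alpha$ is a group, this element has a two-sided inverse $g \in G_\alpha$. Any word in $L$ representing $g$ automatically lies in $L_\alpha$ and satisfies $\elt{a}\cdot g = e_\alpha = \elt{i_\alpha}$, so $V_{\alpha,a}$ is nonempty and step 6 succeeds. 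The check in step 7 then reduces to $g$ being a left as well as right inverse of $\elt{a}\phi_{a\xi,\alpha}$, which of course holds in a group.

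The only conceptual step is the identification $\elt{i_\alpha} = e_\alpha$ in step 3; once this is in place, every remaining check is a direct translation of an identity that holds in any Clifford semigroup. There is no serious obstacle in this direction, in contrast to \fullref{Lemma}{lem:cliffordalg1}, where one must reconstruct the Clifford structure from the algorithm's conditions. Here one is merely confirming that the structure the algorithm is designed to detect does indeed satisfy those conditions.
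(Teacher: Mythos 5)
Your proposal is correct and follows essentially the same route as the paper's proof: check each step in turn, using that $L_\alpha$ maps exactly onto $G_\alpha$, that $\elt{i_\alpha}$ must equal $e_\alpha$ by cancellation in the group $G_\alpha$, that the $e_\alpha$ form a copy of $Y$ and are central, and that $(\elt{a}\phi_{a\xi,\alpha})^{-1} \in G_\alpha$ furnishes the witness for steps 6--7, with left-invertibility following since right inverses in a group are two-sided.
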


\begin{proof}
Suppose the semigroup $S$ defined by the word-hy\-per\-bol\-ic structure
$(A,L,M(L))$ is a Clifford semigroup, with $S =
\cliff[Y;G_\alpha;\phi_{\alpha,\beta}]$. For each $\alpha \in Y$, let
$e_\alpha$ be the identity of $G_\alpha$. The language $L_\alpha$
clearly consists of exactly those words in $L$ that map onto
$G_\alpha$, so $L_\alpha$ is non-empty. Hence the checks in step~1
succeed.

By the definition of multiplication in a Clifford semigroup, $G_\alpha
G_\beta \subseteq G_{\alpha\land\beta}$. Hence if $u \in L_\alpha$, $v
\in L_\beta$, and $w \in L$ are such that $\elt{u}\,\elt{v} = \elt{w}$,
then $w \in L_{\alpha \land\beta}$. Thus the language
\eqref{eq:cliffordalg} is empty for all $\alpha,\beta \in Y$. Hence
all the checks in step~2 succeed.

Let $\alpha \in Y$. For any $w_\alpha \in L_\alpha$, the element
$\elt{w_\alpha}$ lies in the subgroup $G_\alpha$. Thus the language
$I_\alpha$ consists of precisely the words that represent elements of
$G_\alpha$ that right-multiply $w_\alpha$ like an identity. Since
$G_\alpha$ is a subgroup, every element of $I_\alpha$ represents
$e_\alpha$. Since there must be at least one such representative,
$I_\alpha$ is non-empty. Thus every check in step~3
succeeds.

The identities $e_\alpha$ form a subsemigroup isomorphic to the
semilattice $Y$ by the definition of multiplication in a Clifford
semigroup. Thus every check in step~4 succeeds.

Furthermore, every $e_\alpha$ is idempotent and thus central in $S$ by
\cite[Theorem~4.2.1]{howie_fundamentals}, and so every check in step~5
succeeds.

Let $\alpha \in Y$ and $a \in A$ be such that $a\xi \geq \alpha$. Let
$v_{\alpha,a}$ be the word representing
$(\elt{a}\phi_{a\xi,\alpha})^{-1}$. Then
\[
\elt{u_a}\,\elt{v_{\alpha,a}} = \elt{a}(\elt{a}\phi_{a\xi,\alpha})^{-1} = (\elt{a}\phi_{a\xi,\alpha})(\elt{a}\phi_{a\xi,\alpha})^{-1} = e_\alpha.
\]
Hence $v_{\alpha,a} \in V_{\alpha,a}$ and so all the checks in step~6
succeed. Similarly $\elt{v_{\alpha,a}}\,\elt{u_a}$ and so all the
checks in step~7 succeed.

Therefore the algorithm reaches step~8 and halts with output \textit{Yes}.
\end{proof}

\begin{theorem}
\label{thm:cliffdec}
There is an algorithm that takes as input an interpreted
word-hy\-per\-bol\-ic structure $\Sigma$ for a semigroup and decides whether
it is a Clifford semigroup.
\end{theorem}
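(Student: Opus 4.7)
The plan is to mirror the strategy used for \fullref{Theorem}{thm:compsimpdec}: reduce the general decision problem to the species-specific problem already solved by \fullref{Algorithm}{alg:cliffspeciesdec}, and then enumerate the finitely many relevant species.

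First I would observe that the semigroup $S$ described by $\Sigma$ is finitely generated, with $S = \elt{A(\Sigma)}$. If $S$ is a Clifford semigroup $\cliff[Y; G_\alpha; \phi_{\alpha,\beta}]$, then the map $\sigma : S \to Y$ sending each element of $G_\alpha$ to $\alpha$ is a (semilattice) homomorphism whose restriction to the generating set $\elt{A(\Sigma)}$ determines the species $\xi$. Crucially, $Y$ is generated as a semilattice by $\sigma(\elt{A(\Sigma)})$, and a semilattice generated by $n$ elements has cardinality at most $2^n - 1$, so $|Y| \leq 2^{|A(\Sigma)|}$.

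Therefore there are only finitely many candidate pairs $(Y, \xi)$, where $Y$ is a finite semilattice of cardinality at most $2^{|A(\Sigma)|}$ (up to isomorphism there are boundedly many such semilattices) and $\xi : A(\Sigma) \to Y$ is a map whose image generates $Y$. The algorithm enumerates all these candidate pairs and runs \fullref{Algorithm}{alg:cliffspeciesdec} on each. By \fullref{Lemma}{lem:cliffordalg1}, if any run outputs \Yes\ then $S$ is a Clifford semigroup; by \fullref{Lemma}{lem:cliffordalg2}, if $S$ is a Clifford semigroup, the run corresponding to its actual species $(Y, \xi)$ will output \Yes. If every run outputs \No, then $S$ is not a Clifford semigroup.

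The main obstacle is justifying the bound on $|Y|$, since a priori a Clifford semigroup could have a very large structure semilattice. The key point to emphasize is that the structure semilattice is a homomorphic image of $S$ and hence inherits the generating set from $S$, which forces $Y$ to be a finitely generated semilattice on at most $|A(\Sigma)|$ generators; the standard fact that such a semilattice has at most $2^{|A(\Sigma)|}-1$ elements (one per non-empty subset of generators, identified by its meet) then gives the required bound. Everything else is a routine finite enumeration combined with the already-established correctness of \fullref{Algorithm}{alg:cliffspeciesdec}.
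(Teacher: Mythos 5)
Your proposal is correct and follows essentially the same route as the paper: reduce to the species-specific problem solved by \fullref{Algorithm}{alg:cliffspeciesdec}, note that the structure semilattice $Y$ is a homomorphic image of the free semilattice on $|A(\Sigma)|$ generators and hence has at most $2^{|A(\Sigma)|}-1$ elements, and then test the finitely many candidate pairs $(Y,\xi)$, with correctness supplied by \fullref{Lemmata}{lem:cliffordalg1} and \ref{lem:cliffordalg2}.
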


\begin{proof}
We prove that this problem can be reduced to the problem of deciding
whether the semigroup defined by an interpreted word-hy\-per\-bol\-ic structure
$\Sigma$ is a Clifford semigroup with a particular species $\xi : A(\Sigma)
\to Y$.

Let $S$ be the semigroup specified by $\Sigma$. Then $S$ is
finitely generated. Thus we need only consider the problem of deciding
whether $S$ is a finitely generated Clifford semigroup, whose
corresponding semilattice must therefore also be finitely generated. A
finitely generated semilattice is finite.

So if $S$ is a Clifford semigroup
$\cliff[Y;G_\alpha;\phi_{\alpha,\beta}]$, the semilattice $Y$ must be
a homomorphic image of the free semilattice of rank $|A(\Sigma)|$,
which has $2^{|A(\Sigma)|} - 1$ elements. Thus it is suffices to
decide whether $S$ is a Clifford semigroup for some fixed semilattice
$Y$, for one can simply test the finitely many possibilities for $Y$.

One can restrict further, and ask whether $S$ is a Clifford semigroup
with some fixed semilattice $Y$ and some particular placement of
generators into the semilattice of groups. (That is, with knowledge of
in which group $G_\alpha$ each generator $\elt{a}$ putatively lies,
described by a map $\xi : A(\Sigma) \to Y$. Of course, it is necessary
that $\im \xi$ generates $Y$.) There are a bounded number of
possibilities for the map $\xi$, so it suffices to test each one.
\end{proof}

\section{Being free}
\label{sec:free}

This section shows that it is decidable whether a word-hy\-per\-bol\-ic
semigroup is free. The following technical lemma, which is possibly of
independent interest, is necessary.

\begin{lemma}
\label{lem:freecheck}
There is an algorithm that takes as input an alphabet $A$, a symbol
$\#_2$ not in $A$, and a context-free grammar $\Gamma$ defining a
context-free language $L(\Gamma)$ that is a subset of $A^*\#_2A^*$,
and decides whether $L(\Gamma)$ contains a word $x\#_2w^\rev$ where $x
\neq w$.
\end{lemma}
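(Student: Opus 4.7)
My plan is to reduce the question to testing emptiness of a context-free language, which is a standard decidable problem. Writing $u = x$ and $v = w^\rev$, a word $x\#_2 w^\rev \in L(\Gamma)$ has $x \neq w$ if and only if $u \neq v^\rev$, so the question becomes whether $L(\Gamma)$ contains a word $u\#_2 v$ that fails to be a ``palindrome around $\#_2$''. I would split the non-palindrome condition into the two disjoint cases $|u| \neq |v|$, and $|u| = |v|$ together with some letter mismatch $u_i \neq v_{|v|+1-i}$, and handle each separately.

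The length-mismatch case falls to Parikh's theorem. Let $h$ be the homomorphism collapsing every $a \in A$ to a single symbol $\bullet$ and fixing $\#_2$; then $h(L(\Gamma))$ is a context-free language inside $\bullet^*\#_2\bullet^*$, and by Parikh's theorem the set of pairs $\{(n,m) : \bullet^n\#_2\bullet^m \in h(L(\Gamma))\}$ is effectively semilinear, so deciding whether it contains a pair with $n \neq m$ is routine Presburger arithmetic. The letter-mismatch case is the delicate one. Here the key observation is that comparing $u_i$ with $v_{|v|+1-i}$ is precisely the mirror matching a pushdown automaton performs when it pushes $u$ onto its stack before $\#_2$ and pops while reading $v$. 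For each pair $(a,b) \in A \times A$ with $a \neq b$ I would construct, from the Chomsky-normal-form version of $\Gamma$, a new CFG $\Gamma_{a,b}$ whose language consists exactly of those words of $L(\Gamma)$ that exhibit an $(a,b)$-mismatch at mirror positions. The construction splits each non-terminal of $\Gamma$ into finitely many copies that record whether the subtree rooted there (i) straddles $\#_2$, (ii) has emitted a committed letter $a$ strictly to the left of $\#_2$, (iii) has emitted a matching committed $b$ at the mirror distance on the right, or (iv) carries both commitments paired correctly along the path to $\#_2$.

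The hard part will be verifying that the commitment-tracking grammar genuinely remains context-free, in particular that the committed $a$ and the committed $b$ end up at exactly the same distance from $\#_2$ without requiring unbounded memory per non-terminal. The plan is to exploit the fact that in a Chomsky-normal-form derivation tree of a word in $A^*\#_2 A^*$ each binary node on the path to the $\#_2$ leaf sends one of its subtrees entirely to one side of $\#_2$, so the mirror distance is matched implicitly by the recursion along that path --- essentially by the same LIFO discipline that lets a PDA check palindromes --- and only a bounded amount of finite-state information need be tracked at each non-terminal. Once each $\Gamma_{a,b}$ is in hand, emptiness of $L(\Gamma_{a,b})$ is decidable, and iterating over the finitely many pairs $(a,b)$ with $a \neq b$ settles the letter-mismatch case and completes the proof.
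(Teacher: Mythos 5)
Your length-mismatch case is essentially sound, with one small repair: collapsing every letter of $A$ to a single symbol $\bullet$ and taking the Parikh image only records the total number of letters, not the pair $(n,m)$ split by $\#_2$; you should first relabel letters left and right of $\#_2$ by two distinct symbols (a finite transduction, under which context-free languages are effectively closed) and then project the Parikh image. The genuine gap is the letter-mismatch case. The language $B_{a,b}=\{xay\#_2y'bx' : x,x',y,y'\in A^*,\ |y|=|y'|\}$ of words with an $(a,b)$-mismatch at equal distances from $\#_2$ is context-free, but what you need is a grammar for $L(\Gamma)\cap B_{a,b}$, an intersection of two context-free languages, and such a grammar need not exist: the intersection can fail to be context-free. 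Concretely, take $A=\{a,b,c\}$ and $L(\Gamma)=\{c^pac^q\#_2c^pbc^s : p,q,s\geq 0\}$, which is context-free (push the first $c$-block, skip $c^q$, pop it against the block after $\#_2$). An $(a,b)$-mismatch forces $q=p$, so $L(\Gamma)\cap B_{a,b}=\{c^pac^p\#_2c^pbc^s\}$, and intersecting with the regular language $c^*ac^*\#_2c^*b$ gives $\{c^pac^p\#_2c^pb\}$, which is not context-free (and requiring $|u|=|v|$ instead gives $\{c^pac^p\#_2c^pbc^p\}$, which is no better). The structural claim you lean on --- that in a Chomsky-normal-form derivation tree the mirror distance is ``matched implicitly by the recursion along the path to $\#_2$'' --- is precisely what fails: each off-path subtree emits letters on one side of $\#_2$ only, so the two distances to be compared are accumulated at different, unsynchronised places along the path, and no bounded annotation of the non-terminals can compare them. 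This is the same obstruction the paper repeatedly has to dodge: emptiness of an intersection of two context-free languages is not available as a primitive.

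The paper's proof supplies the idea your argument is missing: it first forces the two sides of $\#_2$ to be generated synchronously. For non-terminals that do not derive $\#_2$, either some such non-terminal is pumpable, in which case pumping inserts material on one side of $\#_2$ only and the answer is immediately \emph{Yes}, or each of them derives only finitely many words and can be substituted away. After this reduction the grammar is linear --- every production has the form $M\to pSq$ or $M\to p\#_2q$ with $p,q$ terminal words --- so left and right contexts are produced simultaneously, and a consistency check in the free group on $A$ (assigning to each non-terminal the imbalance $p^{-1}(M\phi)q$ of the context generated so far) decides whether every derived word has the form $w\#_2w^\rev$. Some reduction of this kind, rather than finite-state bookkeeping inside an arbitrary CNF grammar for $\Gamma$, is what your proposal needs and does not contain.
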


\begin{proof}
Suppose $\Gamma = (N,A \cup \set{\#_2},P,O)$. [Here, $N$ is the set of
  non-terminal symbols, $A \cup \set{\#_2}$ is of course the set of
  terminal symbols, $P$ the set of productions, and $O \in N$ is the
  start symbol.] Since $L(\Gamma)$ does not contain the empty word (since
every word in $L(\Gamma)$ lies in $A^*\#_2A^*$), assume without loss that
$\Gamma$ contains no useless symbols or unit productions
\cite[Theorem~4.4]{hopcroft_automata}.

Let
\[
N_\# = \gset{M \in N}{(\exists p,q \in A^*)(M \derives p\#_2q)}.
\]
Notice that if $M \to p$ is a production in $P$ and $M \in N - N_\#$,
then every non-terminal symbol appearing in $p$ also lies in $N -
N_\#$. [This relies on there being no useless symbols in $\Gamma$,
  which means that every other non-terminal in $P$ derives some
  terminal word.] For this reason, it is easy to compute $N_\#$.

Suppose that $M \derives uMv$ for some $M \in N - N_\#$ and $u,v \in
(A \cup \set{\#_2})^*$. Then $u$ and $v$ cannot contain $\#_2$ since $M
\in N - N_\#$. Since there are no unit productions in $P$, at least
one of $u$ and $v$ is not the empty word. Since $M$ is not a useless
symbol, it appears in some derivation of a word $w\#_2x^\rev \in
L(\Gamma)$. Pumping the derivation $M \derives uMv$ yields a word
$w'\#_2(x')^\rev$ where exactly one of $w' = w$ or $x' = x$ holds,
since the extra inserted $u$ and $v$ cannot be on opposite sides of
the symbol $\#_2$ since $M \in N - N_\#$. Hence either $w \neq x$ or
$w' \neq x'$. Hence in this case $L(\Gamma)$ does contain a word of the
given form.

Since it is easy to check whether there is a non-terminal $M \in
N-N_\#$ with $M \derives uMv$, we can assume that no such non-terminal
exists. Therefore any non-terminal $M \in N-N_\#$ derives only
finitely many words (since any derivation starting at $M$ can only
involve non-terminals in $N - N_\#$ and by assumption no such
non-terminal can appear twice in a given derivation). These words can
be effectively enumerated. Let $M \in N-N_\#$ and let $w_1,\ldots,w_n$
be all the words that $M$ derives. Replacing a production $S \to pMq$
by the productions $S \to pw_1q$, $S \to pw_2q$, \ldots, $S \to pw_nq$
does not alter $L(\Gamma)$. Iterating this process, we eventually
obtain a grammar $\Gamma$ where no non-terminal symbol in $N - N_\#$
appears on the right-hand side of a production. Thus all symbols in $N
- N_\#$ can be eliminated and we now have a grammar $\Gamma$ with $N =
N_\#$.

Every production is now of the form $M \to pSq$ or $M \to p\#_2q$,
where $p,q \in A^*$ and $S \in N$. [There can be only one non-terminal
  on the right-hand side of each production, since otherwise some
  terminal word would contain two symbols $\#_2$, which is impossible.]

We are now going to iteratively define a map $\phi : N \to \fgrp{A}$,
where $\fgrp{A}$ denotes the free group on $A$, which we will identify
with the set of reduced words on $A \cup A^{-1}$. First, define $O\phi
= \emptyword$. Now, iterate through the productions as follows. Choose
some production $M \to pSq^\rev$ such that $M\phi$ is already
defined. Let $z = p^{-1}(M\phi)q \in \fgrp{A}$. If $S\phi$ is
undefined, set $S\phi = z$. If $S\phi$ is defined, check that $S\phi$
and $z$ are equal; if they are not, halt: $L(\Gamma)$ does contain
words $w\#_2x^\rev$ with $w \neq x$.

To see this, suppose $S\phi =z$ and consider the sequence of
productions that gave us the original value of $S\phi$:
\[
O \to u_1S_1v_1^\rev, S_1 \to u_2S_2v_2^\rev, \ldots, S_k \to u_kSv_k^\rev,
\]
which implies that $S\phi = (u_1u_2\cdots u_k)^{-1}v_1v_2\cdots v_k$, and the sequence that gave us $M\phi$:
\[
O \to p_1M_1q_1^\rev, M_1 \to p_2M_2q_2^\rev, \ldots, M_k \to p_lMq_l^\rev,
\]
which implies that $M\phi = (p_1p_2\cdots p_l)^{-1}q_1q_2\cdots
q_l$. Choose $r,s \in A^*$ such that $S \derives r\#_2s^\rev$. Then
$L(\Gamma)$ contains both both $u_1\cdots u_kr\#_2s^\rev
v_k^\rev\cdots v_1^\rev$ and (recalling that $M \to pSq^\rev$ is a
production) $p_1\cdots p_lpr\#_2s^\rev q^\rev q_l^\rev\cdots
q_1$. Suppose $u_1\cdots u_kr = v_1\cdots v_ks$ and $p_1\cdots p_lpr =
q_1\cdots q_lqs$. Then $S\phi = (u_1\cdots u_k)^{-1}v_1\cdots v_k =
rs^{-1} = (p_1\cdots p_lp)^{-1}q_1\cdots q_lq = p^{-1}(M\phi)q = z$,
which is a contradiction.

Once we have iterated through all the productions of the form $M \to
pSq^\rev$, iterate through the productions of the form $M \to
p\#_2q^\rev$, and check that $p^{-1}(M\phi)q$. If this check fails,
halt: $L(\Gamma)$ does contain words $w\#_2x^\rev$ with $w \neq x$;
the proof of this is very similar to the previous paragraph.

Finally, notice that if the iteration through all the productions
completes with all the checks succeeding, a simple induction on
derivations, using the values of $M\phi$, shows that all words
$w\#_2x^\rev \in L(\Gamma)$ are such that $w = x$.
\end{proof}

\begin{algorithm}
\label{alg:free}
~

\textit{Input:} An interpreted word-hy\-per\-bol\-ic structure $\Sigma$.

\textit{Output:} If $\Sigma$ describes a free semigroup, output \textit{Yes}; otherwise output \textit{No}.

\textit{Method:}
\begin{enumerate}

\item For each $a \in A$, iterate the following:

\begin{enumerate}

\item Construct the context-free language
\[
D_a = \gset{uv}{u\#_1v\#_2a^\rev \in M}.
\]

\item Check whether $D_a$ is empty. If it is empty, proceed to the next
  interation. If it is non-empty, choose some word $d_a \in D_a$. If $d_a$
  contains the letter $a$, halt and output \No. If $d_a$ does not
  contain the letter $a$, define the rational relations
\begin{align*}
\drel{Q}_L ={}& \bigl(\set{(a,d_a)} \cup \gset{(b,b)}{b \in A - \set{a}}\bigr)^+ \\
\drel{Q}_M ={}& \bigl(\set{(a,d_a)} \cup \gset{(b,b)}{b \in A - \set{a}}\bigr)^+\#_1\\
&\qquad\bigl(\set{(a,d_a)} \cup \gset{(b,b)}{b \in A - \set{a}}\bigr)^+\#_2\\
&\qquad\qquad\bigl(\set{(a,d_a^\rev)} \cup \gset{(b,b)}{b \in A - \set{a}}\bigr)^+.
\end{align*}
Modify $\Sigma$ as follows: replace $A$ by $A - \set{a}$; replace $L$ by
$L \circ \drel{Q}_L$; and replace $M$ by $M \circ \drel{Q}_M$, and
proceed to the next iteration.

\end{enumerate}

\item If $L \neq A^+$, halt and output \No.

\item Define the rational relation
\[
\drel{P} = \gset{(a,a)}{a \in A\} \cup \{(\#_1,\emptyword),(\#_2,\#_2)}.
\]
Let $N = M \circ \drel{P}$. Using the method of
\fullref{Lemma}{lem:freecheck}, check whether $N$ contains any word of
the form $x\#_2w^\rev$ with $x \neq w$. If so, halt and ouput
\No. Otherwise, halt and output \Yes.

\end{enumerate}
\end{algorithm}

\fullref{Lemmata}{lem:free0} to \ref{lem:free2} show that this
algorithm works.

\begin{lemma}
\label{lem:free0}
If $\Sigma$ is a word-hy\-per\-bol\-ic structure for a semigroup $S$, then
the replacement $\Sigma$ produced in step~1(b) is also a
word-hy\-per\-bol\-ic structure for a semigroup $S$.
\end{lemma}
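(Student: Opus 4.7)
The plan is to exhibit $\phi' = \phi|_{(A-\{a\})^+}$ as an interpretation of the modified structure $\Sigma'$ in the sense of \fullref{Definition}{def:improved}, where $\phi : A^+ \to S$ is the given interpretation of the original $\Sigma$. The key observation is that, by the definition of $D_a$, the chosen word $d_a$ is of the form $uv$ with $u,v \in L$ and $u\#_1v\#_2a^\rev \in M$, so $\elt{d_a} = \elt{u}\,\elt{v} = \elt{a}$; moreover, since the algorithm did not halt in step~1(b), the word $d_a$ contains no occurrence of the letter $a$. Consequently, the substitution $\sigma : A^+ \to (A-\{a\})^+$ that sends $a$ to $d_a$ and fixes every other letter satisfies $\elt{\sigma(w)} = \elt{w}$ for every $w \in A^+$.

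First I would verify the basic structural requirements. The new language $L' = L \circ \rel{Q}_L$ is regular and the new $M' = M \circ \rel{Q}_M$ is context-free, since both classes are closed under rational transductions; an inspection of the transducers shows $L' \subseteq (A-\{a\})^+$ and $M' \subseteq L'\#_1L'\#_2(L')^\rev$, so $\Sigma'$ is at least a pre-word-hy\-per\-bol\-ic structure. Injectivity of $\phi'|_{A-\{a\}}$ is inherited from $\phi|_A$, and $L'\phi' = S$ follows since each $s \in S$ has some representative $w \in L$, and $\sigma(w) \in L'$ represents the same element.

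The central verification is the identity
\[
M' = \{u'\#_1v'\#_2(w')^\rev : u',v',w' \in L',\ \elt{u'}\,\elt{v'} = \elt{w'}\}.
\]
For the forward inclusion, each word in $M'$ is the image under $\rel{Q}_M$ of some $u\#_1v\#_2w^\rev \in M$, and a direct reading of $\rel{Q}_M$ shows this image is $\sigma(u)\#_1\sigma(v)\#_2\sigma(w)^\rev$; the required identity $\elt{\sigma(u)}\,\elt{\sigma(v)} = \elt{\sigma(w)}$ then follows from $\elt{u}\,\elt{v} = \elt{w}$ together with the element-preserving property of $\sigma$. For the reverse inclusion, given $u',v',w' \in L'$ with $\elt{u'}\,\elt{v'} = \elt{w'}$, I would pick preimages $u_0,v_0,w_0 \in L$ under $\rel{Q}_L$ (which exist by the definition of $L'$); these satisfy $\elt{u_0}\,\elt{v_0} = \elt{w_0}$ since $\sigma$ preserves elements, so $u_0\#_1v_0\#_2w_0^\rev \in M$, and its image under $\rel{Q}_M$ is precisely $u'\#_1v'\#_2(w')^\rev$.

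The one point requiring a short letter-by-letter calculation, and so the only mild obstacle, is the compatibility of reversal with the substitution: the third factor of $\rel{Q}_M$ uses $a \mapsto d_a^\rev$ rather than $a \mapsto d_a$, and one must verify $\sigma(w)^\rev = \tau(w^\rev)$ for all $w \in A^+$, where $\tau$ sends $a$ to $d_a^\rev$ and fixes every other letter. This identity is immediate from the letterwise equalities $(d_a)^\rev = d_a^\rev$ and $b^\rev = b$ for $b \neq a$, and it is exactly what makes the transducer $\rel{Q}_M$ mirror the substitution $\sigma$ on reversed words. With this in hand, the verification above is complete, and $\Sigma'$ is a word-hy\-per\-bol\-ic structure for the same semigroup $S$.
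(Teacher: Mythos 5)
Your proposal is correct and follows essentially the same route as the paper's proof: it uses the key fact $\elt{d_a}=\elt{a}$ with $d_a$ free of $a$, closure of regular and context-free languages under the rational relations $\rel{Q}_L$ and $\rel{Q}_M$, and the observation that these relations implement the substitution $a\mapsto d_a$ (with $d_a^\rev$ after $\#_2$). You merely spell out in more detail the two inclusions showing that the transformed $M$ is exactly the multiplication table over the transformed $L$, which the paper leaves as a brief remark.
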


\begin{proof}
If the language $D_a$ is non-empty, then any word $w \in D_a$ is such
that $\elt{w} = \elt{a}$. In particular, $\elt{d_a} =
\elt{a}$. Furthermore, since $d_a \in (A-\set{a})^*$, we see that
$\elt{a}$ is a redundant generator. The rational relation $\drel{Q}_L$
relates any word in $A^+$ to the corresponding word in $(A-\set{a})^+$
with all instances of the symbol $a$ replaced by the word $d_a$. The
rational relation $\drel{Q}_M$ relates any word in $A^+\#_1A^+\#_2A^+$
to the corresponding word in
$(A-\set{a})^+\#_1(A-\set{a})^+\#_2(A-\set{a})^+$ with all instances of the
symbol $a$ \emph{before} $\#_2$ replaced by the word $d_a$ and all
instances of the symbol $a$ \emph{after} $\#_2$ replaced by the word
$d_a^\rev$. Hence
\[
M\circ\drel{Q}_M \subseteq
(L\circ\drel{Q}_L)\#_1(L\circ\drel{Q}_L)\#_2(L\circ\drel{Q}_L)^\rev.
\]
Since application of rational relations preserves regularity and
context-freedom, $L\circ\drel{Q}_L$ is regular and $M \circ\drel{Q}_M$
is context-free. Finally, since $\elt{a} = \elt{d_a}$, we see that
$L\circ \drel{Q}_L$ maps onto $S$, and similarly $M\circ\drel{Q}_M$
describes the multiplication of elements of $S$ in terms of
representatives in $L$.
\end{proof}

\begin{lemma}
\label{lem:free1}
If \fullref{Algorithm}{alg:free} outputs \emph{Yes}, the semigroup
defined by the word-hy\-per\-bol\-ic structure $\Sigma$ is a free semigroup.
\end{lemma}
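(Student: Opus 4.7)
The plan is to extract, from the fact that \fullref{Algorithm}{alg:free} terminates with output \Yes, three structural facts about the final word-hyperbolic structure $\Sigma$ (at the end of step~1) and its interpretation $\phi : A^+ \to S$, and then use them to show that $\phi$ is a bijection and therefore exhibits $S$ as the free semigroup on $\phi(A)$.

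By \fullref{Lemma}{lem:free0}, the modifications performed in step~1 do not alter the semigroup $S$, so we may work throughout with the final $\Sigma$ and its interpretation $\phi$. The three facts I want are: (i) for each $a \in A$ still present at the end of step~1, $\elt{a}$ is indecomposable in $S$; (ii) $L = A^+$; and (iii) for every $u\#_1v\#_2w^\rev \in M$, $uv = w$ as words in $A^+$. Fact~(i) follows from the observation that, at the iteration that examined $a$, the language $D_a$ was empty, so no $u,v \in L$ at that moment satisfied $\elt{u}\,\elt{v} = \elt{a}$; since $\elt{L} = S$ throughout the loop, no two elements of $S$ multiply to $\elt{a}$, and this is an algebraic property of the fixed semigroup $S$. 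Fact~(ii) is the precondition for the algorithm not halting at step~2. For fact~(iii), step~3 applies \fullref{Lemma}{lem:freecheck} to $N = M \circ \rel{P}$ and finds no word of the form $x\#_2 w^\rev$ with $x \neq w$; since any $u\#_1v\#_2w^\rev \in M$ contributes $uv\#_2 w^\rev$ to $N$, this forces $uv = w$.

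From (ii), $\phi(A^+) = \phi(L) = S$, so $\phi$ is surjective. For injectivity, suppose $\phi(x) = \phi(y)$ with $x, y \in A^+$ and, by symmetry, $|x| \geq |y|$. If $|x| = 1$ then $|y| = 1$ and injectivity of $\phi|_A$ gives $x = y$. Otherwise $x$ factors as $x_1 x_2$ with $x_1, x_2 \in A^+ = L$; then $\phi(x_1)\phi(x_2) = \phi(x) = \phi(y)$ with $y \in L$, so $x_1 \#_1 x_2 \#_2 y^\rev \in M$, and (iii) gives $x_1 x_2 = y$, i.e.\ $x = y$. (The subcase $|y|=1<|x|$ is excluded by~(i), since $\elt{y}$ would then be decomposable in $S$.) Hence $\phi : A^+ \to S$ is a bijection; together with the observation that for any $u,v \in A^+ = L$ one can choose $w \in L$ with $\phi(w)=\phi(u)\phi(v)$, forcing $w=uv$ by~(iii) and hence $\phi(uv) = \phi(u)\phi(v)$, it is also a homomorphism, so $S$ is the free semigroup on $\phi(A)$.

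The only genuinely delicate point is~(i): although $D_a$ is tested only once during step~1, indecomposability of $\elt{a}$ is a property of the fixed semigroup $S$, not of the current word-hyperbolic structure, and so is preserved by the subsequent rational-relation substitutions, which by \fullref{Lemma}{lem:free0} keep $S$ unchanged. Everything else is a direct consequence of (ii) and (iii).
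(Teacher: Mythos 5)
Your proof is correct and follows essentially the same route as the paper's: passing step~2 gives $L = A^+$, the step~3 check (via the deletion of $\#_1$) gives that every $u\#_1v\#_2w^\rev \in M$ satisfies $uv = w$, and factoring any word of length at least $2$ then yields injectivity of $\phi$, hence an isomorphism $A^+ \to S$. Your fact~(i) is superfluous (the argument via~(iii) already covers the $|y|=1$ case), and your explicit verification that $\phi$ is a homomorphism is a small extra care the paper leaves implicit, but the substance is identical.
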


\begin{proof}
The algorithm can only halt with output \Yes\ in step~3, so the
algorithm must pass step~2 as well. Hence the language of
representatives is $A^+$. Let $S$ be the semigroup defined by $L$
and let $\phi : A^+ \to S$ be an interpretation.

Suppose for \textit{reductio ad absurdum} that $\phi$ is not
injective. Then there are distinct words $u,v \in A^*$ such that
$u\phi = v\phi$. Since $\phi|_A$ is injective by definition, at least
one of $u$ and $v$ has length $2$ or more. Interchanging $u$ and $v$
if necessary, assume $|u| \geq 2$. So $u = u'u''$, where $u'$ and
$u''$ are both non-empty. Since $L = A^+$, we have $u',u'' \in L$ and
so $u'\#_1u''\#_2v^\rev \in M$. Hence $u\#_2v^\rev \in N$. But since
the algorithm outputs \Yes\ at step~3, there is no word $x\#_2w^\rev
\in M$ with $x \neq w$. This is a contradiction and so $\phi$ is
injective.

So $\phi : A^+ \to S$ is an isomorphism and so $S$ is free.
\end{proof}

\begin{lemma}
\label{lem:free2}
If the word-hy\-per\-bol\-ic structure $\Sigma$ defines a free semigroup,
\fullref{Algorithm}{alg:free} outputs \emph{Yes}.
\end{lemma}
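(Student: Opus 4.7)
The plan is to assume that $S$ is a free semigroup with basis $B$ and verify that every check in \fullref{Algorithm}{alg:free} is passed. Write $|s|_B$ for the length of $s \in S$ as a word over $B$, and observe the key additivity: for $w = a_1\cdots a_k \in A^+$, $|\elt{w}|_B = \sum_i |\elt{a_i}|_B \geq k$, since each $|\elt{a_i}|_B \geq 1$. By \fullref{Lemma}{lem:free0}, each modification in step~1 preserves the property that $\Sigma$ is a word-hyperbolic structure for the same $S$, so $\elt{L} = S$ holds throughout.

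First I rule out the \No-output in step~1(b). If $D_a \neq \emptyset$ and $d_a = uv \in D_a$, then $\elt{u}\,\elt{v} = \elt{a}$, so by additivity $|\elt{u}|_B + |\elt{v}|_B = |\elt{a}|_B$. If the letter $a$ appears in $u$, then $|\elt{u}|_B \geq |\elt{a}|_B$, and adding $|\elt{v}|_B \geq 1$ contradicts the equality; the same holds if $a$ lies in $v$. Hence $d_a \in (A - \{a\})^+$ and the substitution proceeds safely.

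Next I claim that when step~1 terminates, $\phi$ restricts to a bijection between the current alphabet $A$ and $B$. Suppose some $a$ surviving in $A$ had $\elt{a}$ non-basis; then $\elt{a} = s_1 s_2$ in $S$, and since $\elt{L} = S$ at the moment $a$ was processed, representatives of $s_1, s_2$ in $L$ would make $D_a$ non-empty, forcing elimination of $a$---contradiction. Conversely, $A$ generates $S$, so every $b \in B$ equals $\elt{w}$ for some $w \in A^+$, and the additivity bound $|\elt{w}|_B \geq |w|$ combined with $|b|_B = 1$ forces $|w| = 1$. Together with injectivity of $\phi|_A$, this gives the bijection, which extends to an isomorphism $\phi : A^+ \to S$.

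The remaining checks are immediate. Injectivity of $\phi$ on $A^+$ and $\elt{L} = S = \phi(A^+)$ force $L = A^+$, so step~2 passes. Since $\rel{P}$ deletes $\#_1$, we have $N = \{uv\#_2 w^\rev : u, v, w \in A^+, \elt{uv} = \elt{w}\}$, and injectivity of $\phi$ forces $uv = w$ in every such triple; hence $N$ contains no pair $x\#_2 w^\rev$ with $x \neq w$, and step~3 outputs \Yes. The main subtlety---and the reason \fullref{Lemma}{lem:free0} is essential---is that step~1 visits each letter only once, so one must argue that every non-basis letter is already exposed as decomposable in the \emph{current} $L$ at the moment it is processed, not only in some later $L$ after further substitutions.
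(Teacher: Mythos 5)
Your proposal is correct and follows essentially the same route as the paper's proof: a $B$-length argument shows any $d_a$ omits $a$ (so the \No\ in step~1(b) never fires), exactly the non-basis letters are eliminated so the surviving alphabet bijects with the basis, whence $L = A^+$, $M$ is the graph of free multiplication, and step~3 succeeds. Your explicit attention to the fact that decomposability must be witnessed in the \emph{current} $L$ at the moment each letter is processed (via \fullref{Lemma}{lem:free0}) is a point the paper's proof passes over more quickly, but it is the same argument in substance.
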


\begin{proof}
Let $B^+$ be the semigroup defined by $\Sigma$. Let $\phi : A^+ \to
B^+$ be an interpretation. Since elements of $B$ are indecomposable,
$B \subseteq A\phi$.

In step~1, the algorithm iterates through each $a \in A$. For each $a
\in B\phi^{-1} \subseteq A$, since $a\phi$ is indecomposable, the
language $D_a$ is empty and the algorithm moves to the next iteration.

Let $a \in A - B\phi^{-1}$. Then $a\phi$ has length (in $B^+$) at
least two and so is decomposable. Hence there exist $u,v \in L$ such
that $uv \in D_a$. Furthermore, since $u\phi$ and $v\phi$ must be
shorter (in $B^+$) than $a\phi$, neither $u$ nor $v$ can include the
letter $a$. Hence the replacement of $\Sigma$ described in step~1(b)
takes place. Since this occurs for all $a \in A-B\phi^{-1}$, at the
end of step~1 we have a word-hy\-per\-bol\-ic structure $\Sigma$ with $A =
B\phi^{-1}$. Since $\phi|_A$ is injective, $\phi|_A$ must be a
bijection from $A$ to $B$. Hence the homomorphism $\phi : A^+ \to B^+$
must be an isomorphism, and so $L= A^+$; thus the check in step~2 is
successful. Therefore
\[
M = \gset{u\#_1v\#_2(uv)^\rev}{u,v \in A^+}
\]
and so
\[
M \circ \drel{P} = \gset{w\#_2w^\rev}{w \in A^+}.
\]
Thus the check in step~3 is successful and the algorithm terminates
with output \Yes.
\end{proof}

Thus, from \fullref{Lemmata}{lem:free1} and \ref{lem:free2}, we
obtain the decidability of freedom for word-hy\-per\-bol\-ic semigroups:

\begin{theorem}
\label{thm:freedec}
There is an algorithm that takes as input an interpreted
word-hy\-per\-bol\-ic structure $\Sigma$ for a semigroup and decides whether
it is a free semigroup.
\end{theorem}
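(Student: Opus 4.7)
The plan is to run \fullref{Algorithm}{alg:free} and invoke the preceding lemmas for correctness; the remaining work is to argue that each step is effectively executable and that the loop terminates.

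First I would check that every operation in \fullref{Algorithm}{alg:free} is algorithmic. In step~1(a), the language $D_a$ is obtained from $M$ by intersecting with the regular language $L\#_1L\#_2\{a\}$ and then projecting away the prefix up to $\#_2$; both operations preserve context-freedom, and emptiness of a context-free language is decidable, so step~1(b) is effective. If $D_a$ is non-empty we can enumerate words in it until we find one, check algorithmically whether it contains $a$, and then apply the rational relations $\rel{Q}_L$ and $\rel{Q}_M$ to $L$ and $M$; since rational relations preserve regularity and context-freedom, the modified $\Sigma$ is again computable. By \fullref{Lemma}{lem:free0}, the modified $\Sigma$ remains a word-hyperbolic structure for the same semigroup $S$, so the iterative modifications are legitimate. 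The test $L = A^+$ in step~2 is decidable since $L$ is regular and $A$ is finite. In step~3, $N = M\circ\rel{P}$ is effectively computable and context-free, and \fullref{Lemma}{lem:freecheck} provides an algorithm to decide whether $N$ contains some $x\#_2w^\rev$ with $x\neq w$.

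Next I would argue termination. Each iteration of the loop in step~1 eliminates one letter from $A$ (when $D_a$ is non-empty and the algorithm does not halt) or else moves on to the next letter without modifying $A$, so the loop executes at most $|A|$ times. Steps~2 and~3 are single tests. Hence the algorithm halts on every input.

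Correctness is exactly the content of \fullref{Lemmata}{lem:free1} and~\ref{lem:free2}: the former shows that an output of \Yes\ implies the semigroup described by $\Sigma$ is free, and the latter shows that if the semigroup is free then the algorithm outputs \Yes. Combined with effectiveness and termination, this yields the claimed algorithm, proving \fullref{Theorem}{thm:freedec}. The only genuinely delicate point is the final check in step~3, since one is asking emptiness-of-intersection-type questions about a context-free language; this is what \fullref{Lemma}{lem:freecheck} is engineered to handle, and it is the main obstacle that had to be overcome before the theorem could be stated.
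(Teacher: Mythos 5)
Your proposal matches the paper's proof: the theorem is obtained exactly by running Algorithm \ref{alg:free} and citing Lemmata \ref{lem:free0}, \ref{lem:free1}, and \ref{lem:free2} (with Lemma \ref{lem:freecheck} making step~3 effective), and your added remarks on effectiveness of each step and termination of the loop are routine details consistent with what the paper leaves implicit. No gaps; this is essentially the same argument.
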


\section{Open problems}
\label{sec:questions}

This conclusing section lists some important question regarding
decision problems for word-hy\-per\-bol\-ic semigroups.

\begin{question}
Is there an algorithm that takes as input an interpreted
word-hy\-per\-bol\-ic structure for a semigroup and decides whether that
semigroup is (a) regular, (b) inverse?

Whether these properties are decidable for automatic semigroups is
currently unknown.
\end{question}

\begin{question}
Is there an algorithm that takes as input an interpreted
word-hy\-per\-bol\-ic structure for a semigroup and decides whether that
semigroup is left-/right-/two-sided-cancellative?

Cancellativity and left-cancellativity are undecidable for automatic
semigroups \cite{c_cancundec}. Right-cancellativity is, however,
decidable \cite[Corollary~3.3]{kambites_decision}.
\end{question}

\begin{question}
Is there an algorithm that takes as input an interpreted
word-hy\-per\-bol\-ic structure for a semigroup and decides whether that
semigroup is finite?

The equivalent question for automatic semigroups is easy: one takes an
automatic structure, effectively computes an automatic structure with
uniqueness, and checks whether its regular language of representatives
is finite. However, this approach cannot be used for word-hy\-per\-bol\-ic
semigroups, because there exist word-hyperbolic semigroups that do not
admit word-hy\-per\-bol\-ic structures with uniqueness indeed, they may not
even admit regular languages of unique normal forms \cite[Examples~10
  \&~11]{cm_wordhypunique}.
\end{question}

\begin{question}
Is there an algorithm that takes as input an interpreted
word-hy\-per\-bol\-ic structure for a semigroup and decides whether that
semigroup admits a word-hy\-per\-bol\-ic structure with uniqueness? (That
is, where the language of representatives maps bijectively onto the
semigroup.) If so, it is possible to compute a word-hy\-per\-bol\-ic
structure with uniqueness in this case?
\end{question}

\bibliography{automaticsemigroups,languages,presentations,rewriting,semigroups,c_publications,\jobname}
\bibliographystyle{alphaabbrv}

\end{document}